\documentclass[11pt]{article}

\usepackage{mathpazo}
\usepackage{amsfonts}
\usepackage{amsmath}
\usepackage{graphicx}
\usepackage{latexsym}
\usepackage{mathabx}
\usepackage[margin=1.05in]{geometry}
  
\usepackage[T1]{fontenc}
\usepackage{times}
\usepackage{color,graphicx}
\usepackage{array}
\usepackage{enumerate}
\usepackage{amsmath}
\usepackage{amssymb}
\usepackage{amsthm}
\usepackage{pgfplots}
\usepackage{pgf}
\usepackage{tikz}
\usetikzlibrary{patterns}
\usepgfplotslibrary{patchplots} % LATEX and plain TEX
\usetikzlibrary{pgfplots.patchplots} % LATEX and plain TEX
\pgfplotsset{width=9cm,compat=1.5.1}

\usepackage{enumitem}
\usepackage[english]{babel}
\usepackage{amsthm}
\numberwithin{equation}{section}
\newtheorem{lemma}{Lemma}[section]
\newtheorem{prop}[lemma]{Proposition}

\newtheorem{theorem}[lemma]{Theorem}
\newtheorem{cor}[lemma]{Corollary}

\newtheorem{rem}[lemma]{Remark}
\newtheorem{ex}[lemma]{Example}

\newcommand{\be}{\mathbf e}

\newcommand{\bx}{\mathbf x}
\newcommand{\bu}{\mathbf u}
\newcommand{\bv}{\mathbf v}

\newcommand{\bw}{\mathbf w}

\newcommand{\one}{\mathbf 1}
\newcommand{\zero}{\mathbf 0}
\newcommand{\supp}{{\Phi}}
\newcommand\spec{{\mathrm{spec}}}
\newcommand{\mspec}{\mathrm{mspec}}
\newcommand{\nspec}{\mathrm{nspec}}

\usepackage{xcolor}
\usepackage{makeidx}
\usepackage[pdftex,linktocpage=true,colorlinks,citecolor=blue,linkcolor=blue,pagebackref]{hyperref}
\usepackage{hyperref}
\usepackage{cleveref}
\usepackage[alphabetic,backrefs]{amsrefs}

\def\C{\mathbb{C}}
\def\R{\mathbb{R}}
\def\Q{\mathbb{Q}}
\def\Z{\mathbb{Z}}

\usepackage[hang,flushmargin]{footmisc}
\begin{document}

\title{Quantum walks and graph operations}

\author{Joy Cooper\textsuperscript{1}, Homer De Vera\textsuperscript{2}, Hermie Monterde\textsuperscript{3$\dagger$}, S.A.~Talebpour\textsuperscript{3}, Xuzhu (Ruth) Wang\textsuperscript{4,5,6}
}

\maketitle

\begin{abstract}
Let $U_X(t)$ be the transition matrix of a quantum walk on a graph $X$ relative to its adjacency matrix $A$ or the Laplacian matrix $L$. This paper investigates the behavior of quantum walks under Cartesian products, joins, and graph complements.
We have two main goals. First, we characterize the conditions such that peak state transfer and pretty good state transfer are preserved under these operations, allowing us to 
construct new families of graphs admitting these properties. Our second goal is to analyze the relationship between the quantum walks on a graph and its complement. We provide bounds for
\begin{center}
$f_{u,v}(t)=\big|U_{X^c}(t)_{u,v}-e^{it\delta}U_X(-t)_{u,v}\big|\quad $ and $\quad g_{u,v}(t)=\big||U_X(t)_{u,v}|-|U_{X^c}(t)_{u,v}|\big|$,
\end{center}
where $\delta=-1$ when dealing with $A$ and $\delta=n$ otherwise. Note that $f_{u,v}(t)$ and $g_{u,v}(t)$ both measure the difference between the behavior of quantum state transfer between vertices $u$ and $v$ in a graph and its complement. If $X$ is regular or $M=L$, then $f_{u,v}(t)$ is bounded above by $\frac{2}{|V(X)|}$. If $X$ is non-regular and $M=A$, then we utilize the main eigenvalues of a graph to obtain an upper bound for $f_{u,v}(t)$ which depends only on $A$. We also use the bounding matrix of the graph to give bounds for the Nordhaus-Gaddum type relations 
\begin{center}
$|U_X(t)_{u,v}|+|U_{X^c}(t)_{u,v}|\quad $ and $\quad |U_X(t)_{u,v}|\cdot |U_{X^c}(t)_{u,v}|$.
\end{center}
Finally, we demonstrate that most of our bounds %for $f_{u,v}(t)$, $g_{u,v}(t)$ $\sigma_{u,v}(t)$ and $\rho_{u,v}(t)$ 
are sharp for certain families of graphs.
\end{abstract}

\noindent \textbf{Keywords:} quantum walk, peak state transfer, pretty good state transfer, sedentary vertex, main eigenvalue\\%, split graph, join graph, Cartesian product, graph complement\\
	
\noindent \textbf{MSC2010 Classification:} %%normally can have max of 5
81P45; %Quantum information, communication, networks
05C50; %Graphs and linear algebra (matrices, eigenvalues, etc.)
15A16; %Matrix exponential
%05C22; %Signed and  graphs
15A18 %Eigenvalues, singular values, and eigenvectors

% Possible additiona, 05C81 Random walks on graphs, 

\addtocounter{footnote}{1}
\footnotetext{Department of Mathematics and Statistics, University of Victoria, Victoria, BC, Canada V8P 5C2}

\addtocounter{footnote}{1}
\footnotetext{Department of Mathematics, University of Manitoba, Winnipeg, MB, Canada R3T 2N2}

\addtocounter{footnote}{1}
\footnotetext{Department of Mathematics and Statistics, University of Regina, Regina, SK, Canada S4S 0A2}

\addtocounter{footnote}{1}
\footnotetext{Department of Mathematics, University of British Columbia, Vancouver, BC, Canada V6T 1Z2}

\addtocounter{footnote}{1}
\footnotetext{Department of Physics and Astronomy, University of Waterloo, Waterloo, ON, Canada N2L 3G1}

\addtocounter{footnote}{1}
\footnotetext{Perimeter Institute for Theoretical Physics, Waterloo, ON, Canada N2L 2Y5}

%{\def\thefootnote{}\footnotetext{Email addresses: {\tt joycooper@uvic.ca, deverahf@myumanitoba.ca, Hermie.Monterde@uregina.ca, stp664@uregina.ca, ruth.wang@uwaterloo.ca}}

{\def\thefootnote{$\dagger$}\footnotetext{Corresponding author {\tt (Hermie.Monterde@uregina.ca)}}}

\medskip

\section{Introduction}

Let $X$ be a graph with adjacency matrix $A=A(X)$ and Laplacian matrix $L=L(X)$. A \textit{(continuous) quantum walk} on $X$ with Hamiltonian $M\in\{A,L\}$ is determined by a one-parameter family of complex symmetric unitary matrices given by $U_X(t)=e^{itM(X)}$. The matrix $U_X(t)$ is called the \textit{transition matrix} of $X$, and describes the propagation of quantum states in the spin network represented by the graph $X$. Here, we view the vertices of the graph as the qubits in the network while the edges represent their interactions. Indeed, the unitarity of $U_X(t)$ allows us to interpret $|U_X(t)_{u,v}|^2$ as the \textit{transfer probability} between the vertices $u$ and $v$ in $X$ at time $t$. Thus, the higher the value of $|U_X(t)_{u,v}|$ for some time $t$, the higher the probability of finding a quantum state at vertex $v$ at time $t$ that was initially assigned at vertex $u$.

Quantum walks are important combinatorial objects with applications to search problems \cite{childs2004spatial}, algorithms \cite{farhi1998quantum}, computation \cite{childs2009universal} and state transfer \cite{Godsil2012a}. In this paper, we focus on state transfer, with the main goal of investigating the behavior of quantum walks under joins, Cartesian products, and graph complements. In particular, we concentrate on pretty good state transfer (PGST) and peak state transfer, since they extend results on the well-known topic of perfect state transfer (PST), a desirable type of state transfer that represents accurate transmission of quantum information within a spin network.

The paper is organized as follows. In Section~\ref{sec:qw}, we revisit and establish new basic results on three types of state transfer on graphs: peak state transfer, PGST, and vertex sedentariness. In Sections~\ref{sec:join},~\ref{sec:cartprod}, and~\ref{sec:comp}, we characterize the conditions such that PGST and peak state transfer in a graph are preserved in joins, Cartesian products, and graph complements, respectively. This allows us to produce new infinite families of graphs with PGST and peak state transfer. In Section~\ref{sec:compare}, we turn to analyzing the relationship between quantum walks on a graph and its complement. We provide tight upper and lower bounds for $f_{u,v}(t)=\big|U_{X^c}(t)_{u,v}-e^{it\delta}U_X(-t)_{u,v}\big|$ and $g_{u,v}(t)=\big||U_{X^c}(t)_{u,v}|-|U_X(t)_{u,v}|\big|$ where $\delta\in\R$ depends on $M$. These two expressions quantify the difference between the behavior of state transfer between two vertices in a graph and its complement. In particular, we show that if (i) $M=A$ and $X$ is regular, or (ii) $M=L$, then $f_{u,v}(t)$ and $g_{u,v}(t)$ are upper bounded by $\frac{2}{n}$, where $n$ is the number of vertices of $X$, which in turn implies that the behavior of the quantum walks on $X$ and its complement tend to coincide as we increase the number of vertices of $X$. In Section~\ref{sec:nonreg}, we derive a reasonable upper bound for $f_{u,v}(t)$ whenever $M=A$ and $X$ is irregular using the main eigenvectors of $X$. To the best of our knowledge, this is the first application of the main eigenvalues to the theory of quantum walks. We devote Section~\ref{sec:NG} to providing bounds for Nordhaus-Gaddum type expressions $\sigma_{u,v}(t)=|U_X(t)_{u,v}|+|U_{X^c}(t)_{u,v}|$ and $\rho_{u,v}(t)=|U_X(t)_{u,v}|\cdot |U_{X^c}(t)_{u,v}|$. In Section~\ref{sec:split}, we compute the spectral decomposition of bidegreed split graphs and identify new infinite families of graphs that admit sedentariness in the graph and its complement. These families are interesting because they are nonregular coconnected graphs that in general do not contain twins, and are in general neither joins nor Cartesian products, unlike most known examples of graphs with sedentary vertices in the literature. Finally, we present open questions in Section~\ref{sec:op}. The remainder of the present section is allotted to basic definitions and notation relevant to the rest of the paper.

Unless otherwise stated, we assume that our graphs are simple (loopless), connected, unweighted, and undirected. If $X$ is clear from the context, then we write $M(X)$, $A(X)$, and $L(X)$ respectively as $M$, $A$, and $L$. Since $M$ is a real symmetric matrix, we may write $M$ in its spectral decomposition as
\begin{equation}
\label{specdec}
M=\sum_{\lambda\in\spec(M)}\lambda E_{\lambda},
\end{equation}
where $\spec(M)$ denotes the set of distinct eigenvalues of $M$ and $E_{\lambda}$ is the spectral idempotent onto the eigenspace of $M$ associated with $\lambda\in \spec(M)$. Note that all such $E_\lambda$ are real, idempotent, pairwise multiplicatively orthogonal, and sum to identity. Using (\ref{specdec}), we may write the transition matrix as
\begin{equation*}
\label{specdecU}
U_X(t)=\sum_{\lambda\in\spec(M)}e^{it\lambda}E_{\lambda}.
\end{equation*}

The \textit{eigenvalue support} of a vertex $u$ in $X$, denoted $\supp_{u}$, is the set $\supp_{u}=\{\lambda\in\spec(M):E_{\lambda}\be_u\neq \textbf{0}\}.$ 

We let $\mathbf{1}$ and $\mathbf{J}$ (resp., $\zero$ and $O$) denote the all-ones (resp., zero) vector and matrix of appropriate size, respectively. We also denote the complete graph, cycle, and path on $n$ vertices by $K_n$, $C_n$ and $P_n$, respectively, and the complete bipartite graph with parts of sizes $r$ and $s$ by $K_{r,s}$. A graph $X$ is \textit{regular} if $\mathbf{1}$ is an eigenvector for $A$ (equivalently, all row sums of $A$ are equal). In particular, we say that $X$ is \textit{$k$-regular} if each row sum of $A$ is $k$. Hence, for a connected $k$-regular graph, $\frac{1}{n}\mathbf{J}$ is a spectral idempotent associated with the eigenvalue $k$. If $X$ is $k$-regular, then $L=kI-A$, and so if the transition matrices of $X$ relative to $A$ and $L$ are denoted $U(t)$ and $U'(t)$, respectively, then $|U(t)_{u,v}|=|U'(t)_{u,v}|$ for all $t$. So, the quantum walks relative to $A$ and $L$ are equivalent if $X$ is regular, i.e. they exhibit the same types of state transfer at the same time. We say that $X$ is \textit{$M$-integral} if $\spec(M)\subseteq\Z$. If $M$ is clear from the context, then we simply say integral. 

\section{Quantum state transfer}\label{sec:qw}

In this section, we revisit and establish new basic results on three types of state transfer on graphs.

\subsection{Peak state transfer}

The \textit{bounding matrix} of $X$ relative to $M$ is the matrix $B=B(X)$ such that for all $u,v\in V(X)$,
\begin{equation*}
B(X)_{u,v}=\sum_{\lambda\in\spec(M)}\big|(E_\lambda\big)_{u,v}\big|.
\end{equation*}
Since $E_\lambda$'s sum to $I$, we have $B(X)_{u,u}=1$ for all $u\in V(X)$. If $X$ is connected and $u,v$ are two arbitrary vertices in $X$, then $(E_0)_{u,v}>0$ when $M=L$ and $(E_\lambda)_{u,v}>0$ when $M=A$ where $\lambda$ is the largest eigenvalue of $A(X)$. In both cases, we see that if $X$ is connected, then $B(X)_{u,v}>0$ for all pairs of vertices $u$ and $v$ in $X$, i.e., $B(X)$ is a matrix with all positive entries. 

\begin{prop}
\label{prop:B+}
Let $M\in\{A,L\}$. If $X$ is a connected graph, then $B(X)$ has all positive entries.
\end{prop}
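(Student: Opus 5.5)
Since $B(X)_{u,v}$ is a sum of nonnegative terms $\big|(E_\lambda)_{u,v}\big|$, it suffices to exhibit, for each pair $u,v$, one eigenvalue $\lambda\in\spec(M)$ with $(E_\lambda)_{u,v}\neq 0$. The natural choice is the eigenvalue whose eigenspace is one-dimensional and spanned by a vector with no zero entries. The diagonal case is already handled by the observation $B(X)_{u,u}=1$ (the $E_\lambda$ sum to $I$), so we may take $u\neq v$, although the argument below covers both cases at once.

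For $M=L$, we use that $X$ connected implies that the kernel of $L$ is spanned by $\one$. The cleanest justification is the quadratic form $\bx^TL\bx=\sum_{ab\in E(X)}(x_a-x_b)^2$: it vanishes only if $\bx$ is constant on each edge, hence constant on the connected graph. Therefore $E_0=\frac{1}{n}\mathbf{J}$, and $(E_0)_{u,v}=\frac1n>0$.

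For $M=A$, we invoke the Perron--Frobenius theorem. Since $X$ is connected, $A$ is a nonnegative irreducible matrix. Its largest eigenvalue $\lambda_1$ is therefore simple, with an eigenvector $\bw$ whose entries are all strictly positive. Normalizing $\bw$ gives $E_{\lambda_1}=\bw\bw^T/(\bw^T\bw)$, so $(E_{\lambda_1})_{u,v}=w_uw_v/\|\bw\|^2>0$.

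The only substantive step is the $M=A$ case, and there the work is entirely in the Perron--Frobenius input: both simplicity of $\lambda_1$ and strict positivity of $\bw$, which is exactly where connectedness (irreducibility) is used. I would cite this as a standard result rather than reprove it. The remaining steps only combine these facts with the nonnegativity of the summands defining $B(X)_{u,v}$.
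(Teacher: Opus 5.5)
Your proposal is correct and follows the same route as the paper, which notes just before the statement that $(E_0)_{u,v}>0$ for $M=L$ and that the idempotent of the largest adjacency eigenvalue has positive $(u,v)$ entry for $M=A$, so $B(X)_{u,v}>0$ since it is a sum of nonnegative terms. Your write-up simply fills in the details the paper leaves implicit: the Laplacian quadratic form, and the Perron--Frobenius simplicity and positivity.
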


Since $B(X)$ is in general a nonnegative matrix, Proposition~\ref{prop:B+} implies that if $B(X)_{u,v}=0$ for some $u,v\in V(X)$, then $X$ is a disconnected graph and $u,v$ belong to different components of $X$. In particular, this observation implies that if $X$ is disconnected, then $B(X)$ is permutation-similar to a block diagonal matrix, where each block is the bounding matrix (with all positive entries) of a component of $X$.

The following result establishes a lower bound on the entries of the bounding matrix.

\begin{prop}
\label{prop:Buv}
Let $X$ be a connected graph on $n$ vertices. If (i) $M=A$ and $X$ is regular, or (ii) $M=L$, then $B(X)_{u,v}\geq \frac{2}{n}$ for all $u,v\in V(X)$. Moreover, if $u\neq v$, then equality is attained in complete graphs.
\end{prop}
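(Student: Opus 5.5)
In both cases (i) and (ii) there is a distinguished spectral idempotent equal to $\frac{1}{n}\mathbf{J}$. If $M=A$ and $X$ is connected and $k$-regular, then $E_k=\frac1n\mathbf{J}$, as noted in the introduction. If $M=L$ and $X$ is connected, the kernel of $L$ is spanned by $\mathbf{1}$, so $E_0=\frac1n\mathbf{J}$. Write $\theta$ for this eigenvalue ($\theta=k$ or $\theta=0$). Then $(E_\theta)_{u,v}=\frac1n$ for all $u,v$, and the proof reduces to bounding the remaining idempotents.

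For $u=v$ there is nothing to prove, since $B(X)_{u,u}=1\ge \frac2n$ whenever $n\ge 2$. For $u\neq v$ I will use that the idempotents sum to the identity. Since $I_{u,v}=0$, we have $\sum_{\lambda\neq\theta}(E_\lambda)_{u,v}=-\frac1n$. By the triangle inequality,
\begin{equation*}
B(X)_{u,v}=\frac1n+\sum_{\lambda\neq\theta}\big|(E_\lambda)_{u,v}\big|\;\geq\;\frac1n+\Big|\sum_{\lambda\neq\theta}(E_\lambda)_{u,v}\Big|=\frac1n+\frac1n=\frac2n .
\end{equation*}
This is the whole argument. The only step that needs care is making sure $E_\theta$ is exactly $\frac1n\mathbf{J}$, which uses connectivity: it makes $\theta$ simple with eigenvector $\mathbf{1}$. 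If I wanted to avoid the connectivity hypothesis in case (i), I could instead use $\frac1n\mathbf{J}\le E_\theta$ entrywise, but connectivity is assumed, so this is not needed.

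For sharpness, take $X=K_n$. Relative to $A$, the spectral decomposition is $E_{n-1}=\frac1n\mathbf{J}$ and $E_{-1}=I-\frac1n\mathbf{J}$. Relative to $L$, it is $E_0=\frac1n\mathbf{J}$ and $E_n=I-\frac1n\mathbf{J}$. In either case, for $u\neq v$ we get $B_{u,v}=\frac1n+\big|{-\frac1n}\big|=\frac2n$, so equality holds.
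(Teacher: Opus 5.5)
Your proof is correct and is essentially the paper's argument: both use $E_\theta=\frac1n\mathbf{J}$ and $\sum_{\lambda\neq\theta}(E_\lambda)_{u,v}=-\frac1n$, and your direct triangle inequality is a cleaner version of the paper's step of collecting the negative entries into a set $S$. One correction to your unused aside: $\frac1n\mathbf{J}\le E_\theta$ entrywise is false for a disconnected regular graph, since the entries of $E_k$ between different components are $0$. The bound itself also fails there, because $B(X)_{u,v}=0$ for such $u,v$, so connectivity is genuinely needed.
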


\begin{proof}
Suppose that (i) or (ii) holds. So, $E_k=\frac{1}{n}\mathbf{J}$ when $M=A$ and $X$ is $k$-regular, and $E_0=\frac{1}{n}\mathbf{J}$ when $M=L$. Since $B(X)_{u,u}=1$ for all $u\in V(X)$, it suffices to prove the statement when $u\neq v$. Since $X$ is connected, $M$ has at least two distinct eigenvalues. Set $\theta=k$ when $M=A$ and $X$ is $k$-regular, and $\theta=0$ otherwise so that $(E_\theta)_{u,u}=\frac{1}{n}$. Because $E_\lambda$'s sum to $I$ and $u\neq v$, we have $\sum_{\lambda\in\spec(M)\backslash\{\theta\}}(E_\lambda)_{u,v}=-\frac{1}{n}.$
Thus, there exists $\varnothing\neq S\subseteq \spec(M)\backslash\{\theta\}$ such that $(E_\lambda)_{u,v}<0$ for all $\lambda\in S$ and $\sum_{\lambda\in S}(E_\lambda)_{u,v}\leq -\frac{1}{n}$. So $\big|\sum_{\lambda\in S}(E_\lambda)_{u,v}
\big|\geq\frac{1}{n}$. Thus, $B(X)_{u,v}\geq (E_\theta)_{u,u}+\big| \sum_{\lambda\in S}(E_\lambda)_{u,v}\big|= \frac{2}{n}$, with equality when $X=K_n$. 
\end{proof}

We now define peak state transfer. Using the fact that $U_X(t)=\sum_{\lambda\in\spec(M)}e^{it\lambda} (E_\lambda)$, we get
\begin{equation}
\label{eq:bounding}
|U_X(t)_{u,v}|\leq B(X)_{u,v}\quad \text{for all $t\in\R$}.
\end{equation}
We say that \textit{peak state transfer} occurs between two vertices $u$ and $v$ in $X$ a time $\tau$ if
\begin{equation}
\label{eq:peakst}
U(\tau)_{u,v}=\gamma B(X)_{u,v}
\end{equation}
for some unit $\gamma\in\C$. Peak state transfer was introduced by Coutinho, Guo, and Schmeits in \cite{coutinho2025peak}. 

We say that two vertices $u$ and $v$ in $X$  admit \textit{perfect state transfer} (PST) at time $\tau>0$ if peak state transfer occurs between them in $X$ at time $\tau$ and $B(X)_{u,v}=1$. Equivalently, $|U(\tau)_{u,v}|=1$. We say that $u$ is \textit{periodic} in $X$ if $|U(\tau)_{u,u}|=1$ for some $\tau>0$. It is known that if PST occurs between $u$ and $v$ at time $\tau$, then each of them is periodic at time $2\tau$ \cite[Section 3]{Godsil2012a}. A set $\mathcal{S}\subset \mathbb{R}$ with at least two elements satisfies the \textit{ratio condition} if $\frac{\lambda-\theta}{\mu-\eta}\in\Q$ for any $\lambda,\theta,\mu,\eta\in \mathcal{S}$ with $\mu\neq \eta$. It is known that the vertex $u$ is periodic in $X$ if and only if  $\supp_{u}$ satisfies the \textit{ratio condition} \cite{Godsil2011}. We say that vertex $u$ is \textit{almost periodic} if $\lim_{m\rightarrow\infty}|U(\tau_m)_{u,u}|=1$ for some sequence $\{\tau_m\}$. It is known that every vertex in a graph is almost periodic \cite[Theorem 2.7.2]{Coutinho2021}, and so $\sup_{t>0}|U(t)_{u,u}|=1$ for all $u\in V(X)$ in any graph $X$.

\begin{rem}
It is known that PST is a monogamous relation, i.e., if PST occurs between $u$ and $v$, and also between $u$ and $w$, then $v=w$ \cite{Kay2011}. However, peak state transfer is not monogamous in general. For example, complete graphs admit peak state transfer between any pair of vertices (see Example~\ref{ex:knnn}).
\end{rem}

\subsection{Pretty good state transfer}
Peak state transfer is a relaxation of PST. Another relaxation of PST is \textit{pretty good state transfer} (PGST), which happens between $u$ and $v$ if there exist a sequence $\{\tau_m\}$ and a unit complex number $\gamma$ such that 
\begin{equation}
\label{eq:pgst}
\lim_{m\rightarrow\infty}U(\tau_m)_{u,v}=\gamma.
\end{equation}
Note that for all $u\in V(X)$, $0\in\supp_u$ if $M=L$ and $\lambda\in \supp_u$ otherwise, where $\lambda$ is the largest eigenvalue of $A$. Thus, $\gamma=1$ in (\ref{eq:peakst}) and (\ref{eq:pgst}) if $M=L$. Otherwise, $\gamma=e^{i\tau \lambda}$ in (\ref{eq:peakst}) and $\gamma=\lim_{m\rightarrow\infty}e^{i\tau_m \lambda}$ in (\ref{eq:pgst}).% , where $\lambda$ is the largest eigenvalue of $A$.

Vertices $u$ and $v$ are \textit{strongly cospectral} if $E_\lambda\be_u=\pm E_\lambda\be_v\neq \zero$, in which case we may partition $\supp_u$ as 
\begin{center}
$\supp_{u,v}^+=\{\lambda\in \supp_u:E_\lambda\be_u=E_\lambda\be_v\}\quad $ and $\quad \supp_{u,v}^-=\{\lambda\in \supp_u:E_\lambda\be_u=-E_\lambda\be_v\}$.
\end{center}
%Strong cospectrality is a requirement for PGST. 

A useful tool for establishing the occurrence of PGST is Kronecker's Approximation Theorem \cite{apostol2012modular}.

\begin{theorem}
\label{thm:kron}
If $\alpha_1,\ldots,\alpha_r$ are arbitrary real numbers and $1,\lambda_1,\ldots,\lambda_r$ are real numbers linearly independent over $\mathbb{Q}$, then for $\epsilon>0$, there exist integers $q,p_1,\ldots,p_r$ such that for all $j\in\{1,\ldots,r\}$,
\begin{center}
$|q\lambda_j-p_j-\alpha_j|<\epsilon$.
\end{center}
\end{theorem}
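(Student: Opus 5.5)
The statement just given is Kronecker's Approximation Theorem, which the paper cites from \cite{apostol2012modular} rather than proving. If I were to prove it, I would follow the standard route through equidistribution (Weyl's criterion) applied to the torus $\mathbb{T}^r=\R^r/\Z^r$. Set $\boldsymbol\lambda=(\lambda_1,\ldots,\lambda_r)$ and consider the sequence of points $x_q=q\boldsymbol\lambda \bmod \Z^r$ for $q\in\Z_{\geq 1}$. It suffices to show that $\{x_q\}$ is dense in $\mathbb{T}^r$. Density means that for the target point $(\alpha_1,\ldots,\alpha_r)\bmod\Z^r$ some $x_q$ lies within $\epsilon$ of it in every coordinate. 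That is exactly $|q\lambda_j-p_j-\alpha_j|<\epsilon$ for suitable integers $p_j$.

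The key step is to prove equidistribution using Weyl's criterion. For every nonzero integer vector $\mathbf{k}\in\Z^r$ we need
\[
\frac{1}{N}\sum_{q=1}^{N}e^{2\pi i q\,\mathbf{k}\cdot\boldsymbol\lambda}\longrightarrow 0 .
\]
The hypothesis that $1,\lambda_1,\ldots,\lambda_r$ are linearly independent over $\Q$ gives $\mathbf{k}\cdot\boldsymbol\lambda\notin\Z$. Hence $z=e^{2\pi i\mathbf{k}\cdot\boldsymbol\lambda}\neq 1$, and the geometric sum is bounded by $2/|1-z|$. The averages therefore tend to $0$.

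To pass from equidistribution to density, I would use a continuous bump function $\phi\geq0$ supported in the $\epsilon$-box around $\alpha$ with positive integral. I would approximate $\phi$ uniformly by trigonometric polynomials, using Stone--Weierstrass or Fejér kernels on the torus. The averages $\frac1N\sum_q\phi(x_q)$ then converge to $\int\phi>0$, so some $x_q$ lands in the box.

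The main obstacle is this approximation step, i.e.\ justifying Weyl's criterion for continuous test functions. It is routine but is where the analysis lives.

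An alternative, more elementary route avoids Weyl's criterion. First, the closure of $\{x_q\}$ together with its negatives generates a closed subgroup $H$ of $\mathbb{T}^r$. A proper closed subgroup of the torus is annihilated by some nontrivial character $e^{2\pi i\mathbf{k}\cdot x}$. Such a character would force $\mathbf{k}\cdot\boldsymbol\lambda\in\Z$, contradicting independence, so $H=\mathbb{T}^r$. To handle the sign of $q$, note that the closure of the semigroup $\{x_q: q\geq1\}$ in a compact group is itself a group. The proof of the character fact uses Pontryagin duality, so I would prefer the Weyl approach.
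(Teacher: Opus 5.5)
The paper gives no proof of this statement. It quotes Kronecker's approximation theorem from Apostol and uses it as a black box, for instance in Theorem~\ref{thm:pgst} to produce times $\tau=2\pi q/c$, so there is no in-paper argument to match. Your Weyl-criterion argument is a correct, complete proof.

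The hypothesis enters exactly where you use it. If $\mathbf{k}\in\Z^r\setminus\{\mathbf{0}\}$ and $\sum_j k_j\lambda_j=m\in\Z$, then $-m\cdot 1+\sum_j k_j\lambda_j=0$ is a nontrivial rational relation among $1,\lambda_1,\ldots,\lambda_r$. The step you flag as the main obstacle is routine. Trigonometric polynomials are uniformly dense in $C(\mathbb{T}^r)$ by Stone--Weierstrass, and a $\delta$-approximation argument transfers the limit of the averages from them to your bump function $\phi$.

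The textbook route in the source the paper cites is Bohr's argument, which differs from yours:
\begin{enumerate}
\item First prove the real-time version: $t\in\R$, with only $\lambda_1,\ldots,\lambda_r$ assumed independent.
\item Expand $F(t)^p$ for $F(t)=1+\sum_j e^{2\pi i(t\lambda_j-\alpha_j)}$. By independence the frequencies are distinct, so the mean of $|F(t)|^{2p}$ is a sum of squared multinomial coefficients.
\item Let $p\to\infty$ to get $\sup_t|F(t)|=r+1$. This forces every phase close to $1$.
\item Discretize by adjoining $1$ to the list of frequencies.
\end{enumerate}
Bohr's route is entirely elementary, avoids approximation theorems, and yields the continuous-time statement directly. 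Your proof uses a bit more analysis but proves more. The orbit $q(\lambda_1,\ldots,\lambda_r)\bmod\Z^r$ is equidistributed, not merely dense. Hence $q$ may be taken positive, and for $\epsilon<\tfrac12$ the proportion of admissible $q\le N$ tends to $(2\epsilon)^r$.

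In your fallback subgroup argument, the semigroup remark is unnecessary, since the statement allows $q\in\Z$ of either sign.
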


In later sections, we will need examples of graphs that admit PGST relative to a sequence $\{\tau_m\}\subseteq 2\pi\Z$. This motivates us to provide sufficient conditions for this property to occur. 

\begin{theorem}
\label{thm:pgst}
Suppose $u$ and $v$ are strongly cospectral vertices in $X$.
\begin{enumerate}
\item If $\supp_{u}$ is a linearly independent set over $\Q$, then PGST occurs between $u$ and $v$ in $X$.
\item If $M=A$, $X$ is bipartite and $\supp_{u}\cap\R^+$ is a linearly independent set over $\Q$, then PGST occurs between $u$ and $v$.
\end{enumerate}
Moreover, for any rational number $c>0$, PGST occurs between $u$ and $v$  relative to a sequence $\{\tau_m\}\subseteq (\frac{2\pi}{c})\Z$ in 1 or 2 when $u$ and $v$ are in the same parts. In particular, we may choose $\{\tau_m\}\subseteq 2\pi\Z$. 
\end{theorem}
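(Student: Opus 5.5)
Since $u$ and $v$ are strongly cospectral, for every $t$ we have
\[
U(t)_{u,v}=\sum_{\lambda\in\supp_{u,v}^+}e^{it\lambda}(E_\lambda)_{u,u}-\sum_{\lambda\in\supp_{u,v}^-}e^{it\lambda}(E_\lambda)_{u,u},
\]
and $\sum_{\lambda\in\supp_u}(E_\lambda)_{u,u}=1$ with all these terms positive. So PGST with phase $\gamma$ is equivalent to finding times $\tau_m$ with $e^{i\tau_m\lambda}\to\gamma$ for $\lambda\in\supp_{u,v}^+$ and $e^{i\tau_m\lambda}\to-\gamma$ for $\lambda\in\supp_{u,v}^-$. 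The plan is to produce such times by Theorem~\ref{thm:kron}, with times taken to be integer multiples of $\frac{2\pi}{c}$ so that the ``moreover'' clause comes for free.

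For part 1, I would write $\tau=\frac{2\pi}{c}q$ with $q\in\Z$. Then $e^{i\tau\lambda}=e^{2\pi i q(\lambda/c)}$. Since $\supp_u$ is linearly independent over $\Q$ and $c\in\Q^+$, the numbers $\lambda/c$ for $\lambda\in\supp_u$ are also $\Q$-independent. To apply Theorem~\ref{thm:kron}, however, we need $1,\lambda_1/c,\dots,\lambda_r/c$ to be $\Q$-independent.

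This is the main obstacle. If $1$ lies in the $\Q$-span of $\supp_u$, we fix it by sacrificing one eigenvalue. Note that $\gamma$ is only required to be some unit complex number, so we may let one eigenvalue $\lambda_0$ absorb the phase. Concretely, I would pick $\lambda_0\in\supp_u$ such that $\{1\}\cup(\supp_u\setminus\{\lambda_0\})$ is $\Q$-independent after scaling by $1/c$; this exchange is possible by linear algebra, since $\supp_u$ independent means at most one element must be removed. Then I apply Kronecker to $\lambda_j/c$ for the remaining $\lambda_j$, with targets $\alpha_j$ chosen so that $e^{2\pi i q\lambda_j/c}$ approximates $\pm\gamma$, where $\gamma:=e^{2\pi i q\lambda_0/c}$ if $\lambda_0\in\supp^+_{u,v}$ (negated if $\lambda_0\in\supp^-_{u,v}$).

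The difficulty is that $\gamma$ depends on $q$. To handle it, I would instead set the targets relative to $\lambda_0$. If $1\notin\mathrm{span}_\Q(\supp_u)$, no issue arises: I take all targets relative to $\gamma=1$, using $\alpha_j=0$ or $\tfrac12$. If $1=\sum_j r_j\lambda_j/c$, then I express the phases via the differences $(\lambda_j-\lambda_0)/c$, chosen so that they are independent together with $1$, and I target $0$ or $\tfrac12$ according to sign. I expect checking this independence to be the delicate step. A cleaner alternative is to pass to a subsequence: take $q$ with $e^{2\pi iq\lambda_j/c}$ near the targets and $\gamma$ converging along a subsequence by compactness.

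For part 2, bipartiteness with $M=A$ gives $\supp_u$ symmetric about $0$. Moreover, $(E_{-\lambda})_{u,v}=\pm(E_\lambda)_{u,v}$, with sign $+$ when $u,v$ are in the same part and $-$ otherwise. I would therefore reduce to the positive eigenvalues, together with $0$ if it lies in $\supp_u$, and apply the same Kronecker argument to $\supp_u\cap\R^+$. For the sign pattern on $-\lambda$ to be consistent we need $\gamma=\pm1$ real, which holds with targets relative to $1$. This is also why the statement about times in $\frac{2\pi}{c}\Z$ is restricted to $u,v$ in the same part: then the pairs $\lambda,-\lambda$ carry the same sign and $e^{i\tau(-\lambda)}=\overline{e^{i\tau\lambda}}$ matches automatically. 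Finally, taking $c=1$ gives $\{\tau_m\}\subseteq2\pi\Z$.
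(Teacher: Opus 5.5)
Your reduction of PGST to aligning phases ($e^{i\tau\lambda}\to\gamma$ on $\supp_{u,v}^+$, $\to-\gamma$ on $\supp_{u,v}^-$) is correct. You also correctly identify that Theorem~\ref{thm:kron} needs $1$ adjoined to the frequencies. But you never resolve this, and it is the entire content of the ``moreover'' clause.

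\textbf{The gap in part~1.} Your differences route needs $1\notin D:=\mathrm{span}_\Q\{\lambda-\mu:\mu\in\supp_u\}$. This cannot be arranged by choosing $\lambda_0$: $D=\{\sum a_\mu\mu:\sum a_\mu=0\}$ does not depend on $\lambda_0$. The compactness fallback also fails. If you fix targets for $e^{2\pi iq\lambda_j/c}$ and let $\gamma=e^{2\pi iq\lambda_0/c}$ converge along a subsequence, nothing forces the limit to equal $\pm$ the other phases. Since $\lambda_0/c$ is a rational combination of $1$ and the other $\lambda_j/c$, the possible limits are fixed arithmetically, and whether they match is again the question whether $1\in D$.

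\textbf{How to close it.} The missing ingredient is Galois closure of the support. Since $E_\lambda=\prod_{\mu\neq\lambda}(M-\mu I)/(\lambda-\mu)$, we have $\sigma(E_\lambda)=E_{\sigma(\lambda)}$ for every automorphism $\sigma$ of the splitting field, so $\supp_u$ is closed under conjugation. Linear independence then forces $\supp_u$ to be a single orbit with nonzero integer trace $T$: an orbit of trace $0$, or two orbits with traces $T_1,T_2$, would give a rational relation. In particular $1\in\mathrm{span}_\Q\supp_u$ always, so your ``no issue'' case is vacuous. Applying $\sigma$ to $1=\sum a_\mu\mu$ and using uniqueness of coefficients gives $a_\mu=1/T$ for all $\mu$. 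Hence $\sum a_\mu=|\supp_u|/T\neq0$ and $1\notin D$. Theorem~\ref{thm:kron} then applies to $\{\frac1c(\lambda-\mu)\}$ with targets $0,\frac12$, aligning all relative phases at $\tau=2\pi q/c$; compactness is needed only for the global phase $e^{i\tau\lambda}$. This is exactly the paper's route, but the paper also never checks independence from $1$.

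\textbf{Part~2: the ``moreover'' claim is false, so no repair exists.} Take $P_5$ with $u=1$, $v=5$. These vertices lie in the same part and are strongly cospectral. Also $\supp_u\cap\R^+=\{\sqrt3,1\}$ is independent over $\Q$, with $0\in\supp_{u,v}^+$ and $\pm1\in\supp_{u,v}^-$. Here $U(t)_{1,5}=\frac16\cos(\sqrt3t)-\frac12\cos t+\frac13$. On $t=2\pi q$ this equals $\frac16\cos(2\pi q\sqrt3)-\frac16\in[-\frac13,0]$. So PGST never occurs along $2\pi\Z$; it occurs only along odd multiples of $\pi$.

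Your plan of running Kronecker on $\{1\}\cup\frac1c(\supp_u\cap\R^+)$ breaks precisely because $1$ can lie in that span; here it is even an eigenvalue. The paper's proof has the same hole. In addition, its system (\ref{eq:pgstchar1}) omits the constraint coming from $0\in\supp_u$. For $P_5$, $\tau=2\pi q$ with $q\sqrt3\approx\frac12$ satisfies (\ref{eq:pgstchar1}), yet gives $U(\tau)_{1,5}\approx-\frac13$.

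\textbf{What survives in part~2.} PGST does hold for real $\tau$, via the continuous form of Kronecker applied to $\supp_u\cap\R^+$:
\begin{itemize}
\item Same part: use targets $0$ or $\pi$, with $\gamma=\pm1$ chosen according to the class of $0$. You handle this case.
\item Different parts: $\gamma$ cannot be real and $0\notin\supp_u$, so one needs targets $\pm\frac{\pi}{2}$ with $\gamma=\pm i$. Your proposal leaves this case out.
\end{itemize}
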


\begin{proof}
Let $u$ and $v$ be strongly cospectral in $X$ and $c>0$ be rational. From \cite{Banchi2017}, PGST occurs between $u$ and $v$ if and only if for each $\epsilon>0$, there exists $\tau\in\R$ such that for all $\lambda,\eta\in\supp_{u,v}^+$ and $\theta\in\supp_{u,v}^-$,
\begin{equation}
\label{eq:pgstchar}
|\tau (\lambda-\eta)|<\epsilon\pmod{2\pi}\quad \text{and}\quad |\tau (\lambda-\theta)-\pi|<\epsilon\pmod{2\pi}
\end{equation}

To prove 1, suppose that $\supp_{u}$ is a linearly independent set over $\Q$ and fix $\lambda\in\supp_{u,v}^+$. Then the set $\{\frac{1}{c}(\lambda-\mu):\mu\in \supp_u\backslash\{\lambda\}\}$ is also linearly independent over $\Q$. Thus, for each $\epsilon>0$, Theorem~\ref{thm:kron} implies that there exist integers $q,p_\lambda,p_\theta$ such that $\left|\frac{q}{c}(\lambda-\eta)-p_\eta\right|<\frac{\epsilon}{2\pi}$ and $\left|\frac{q}{c}(\lambda-\theta)-p_\theta-\frac{1}{2}\right|<\frac{\epsilon}{2\pi}$ both hold modulo $2\pi$
for all $\eta\in\supp_{u,v}^+$ and $\theta\in\supp_{u,v}^-$. Equivalently, 
\begin{center}
$\left|(2\pi q/c)(\lambda-\eta)- 2\pi p_\lambda\right|<\epsilon\quad $ and $\quad \left|(2\pi q/c)(\lambda-\theta)-2\pi p_\theta-\pi\right|<\epsilon$
\end{center}
modulo $2\pi$. Thus, for each $\epsilon>0$, (\ref{eq:pgstchar}) has a solution $\tau=2\pi q/c$ for some integer $q$, implying that PGST occurs between $u$ and $v$ in $X$ relative to a sequence $\{\tau_m\}\subseteq (2\pi/c)\Z$. This proves 1.

To prove 2, let $M=A$, $X$ be bipartite and $\supp_{u}\cap\R^+$ be a linearly independent set over $\Q$. Define $S^+=\supp_{u,v}^+\cap \R^+$ and $S^-=\supp_{u,v}^-\cap \R^+$. First, let $u$ and $v$ be in the same parts in $X$. As $X$ is bipartite, the form of eigenvectors of $\lambda$ and $-\lambda$ implies that $\supp_{u,v}^+$ and $\supp_{u,v}^-$ are closed under multiplication by $-1$. So, if $\lambda\in S^+$ is fixed, then (\ref{eq:pgstchar}) is equivalent to
\begin{equation}
\label{eq:pgstchar1}
|\tau(\lambda-\eta)|<\epsilon,\quad 
|\tau(2\lambda)|<\epsilon\quad \text{and}\quad |\tau(\lambda-\theta)-\pi|<\epsilon 
\end{equation}
modulo $2\pi$ for all $\eta\in S^+$ and $\theta\in S^-$. We claim that $\{\lambda-\eta:\lambda\neq\eta\in S^+\}\cup \{\lambda-\theta:\theta\in S^-\}\cup\{2\lambda\}$ is also a linearly independent set over $\Q$. To see this, if $a_\eta,b_\theta,c$ are integers such that
\begin{equation*}
\sum_{\lambda\neq \eta\in S^+}a_\eta(\lambda-\eta)+ \sum_{\theta\in S^-}b_\theta(\lambda-\theta)+c(2\lambda)=0,
\end{equation*}
then rewriting this equation and and using the fact that $\supp_{u}\cap\R^+$ is a linearly independent set over $\Q$ yields $a_\eta=0$ for all $\eta\neq \lambda$, $b_\theta=0$ for all $\theta$, and $c=0$. This proves the claim. Applying the same argument as in 1, there exist integers $q,p_\lambda,p_\theta$ such that modulo $2\pi$, we have
\begin{center}
$\left|(2\pi q/c)(\lambda-\eta)- 2\pi p_\lambda\right|<\epsilon,\quad $ $\left|(2\pi q/c)(2\lambda)\right|<\epsilon\quad$ and $\quad\left|(2\pi q/c)(\lambda-\theta)-2\pi p_\theta-\pi\right|<\epsilon$
\end{center}
for all $\eta\in S^+$ and $\theta\in S^-$. Thus, for each $\epsilon>0$, (\ref{eq:pgstchar1}) has a solution $\tau=\frac{2\pi q}{c}$ for some integer $q$. Since (\ref{eq:pgstchar1}) is equivalent to (\ref{eq:pgstchar}), $\tau=\frac{2\pi q}{c}$ is also a solution to (\ref{eq:pgstchar1}), implying that PGST occurs between $u$ and $v$ in $X$ relative to a sequence $\{\tau_m\}\subseteq (\frac{2\pi}{c})\Z$. Next, suppose $u$ and $v$ are in the same parts in $X$. As $X$ is bipartite, the form of eigenvectors of $\lambda$ and $-\lambda$ implies that
$\lambda\in \supp_{u,v}^+$ if and only of $-\lambda\in \supp_{u,v}^-$. Thus, for a fixed $\lambda\in S^+$, (\ref{eq:pgstchar}) is equivalent to
\begin{equation*}
|\tau (\lambda-\eta)|<\epsilon,\quad |\tau (\lambda+\theta)|<\epsilon,\quad |\tau (\lambda+\eta)-\pi|<\epsilon,\quad \text{and}\quad |\tau (\lambda-\theta)-\pi|<\epsilon.
\end{equation*}
modulo $2\pi$ for all $\eta\in S^+$ and $\theta\in S^-$. Now, if $a_\eta,b_\theta,c_\eta,d_\theta$ are integers such that
\begin{center}
$\displaystyle\sum_{\eta\in S^+\backslash\{\lambda\}} a_\eta(\lambda-\eta)+\displaystyle\sum_{\theta\in S^-\backslash\{\lambda\}} b_\theta(\lambda+\theta)+\displaystyle\sum_{\eta\in S^+\backslash\{\lambda\}} c_\eta(\lambda+\eta)+\displaystyle\sum_{\theta\in S^-\backslash\{\lambda\}} d_\theta(\lambda-\theta)=0$, 
\end{center}
then, as $\supp_{u}\cap\R^+$ is linearly independent over $\Q$, $a_\eta=c_\eta$, $b_\theta=d_\theta$ and
$2\displaystyle\sum_{\eta\in S^+\backslash\{\lambda\}} a_\eta + 2\displaystyle\sum_{\theta\in S^-\backslash\{\lambda\}} b_\theta=0$. Applying \cite[Lemma 1]{kempton2017pretty}, we obtain PGST between $u$ and $v$. 
\end{proof}

\begin{rem}
Note that Theorem~\ref{thm:pgst}(1) is well-known, but the novelty of this result is the fact that the sequence of times may be chosen to lie in $2\pi \Z$.
\end{rem}

\begin{ex}
\label{ex:path1}
Let $M=A$. From \cite{Godsil2012}, $P_n$ has PGST between end vertices if and only if the set $\spec(A)\cap\R^+$ is linearly independent over $\Q$. Equivalently, $n+1\in\{2^t,p,2p\}$ where $t\geq 1$ is an integer and $p\geq 1$ is a prime, in which case PGST in $P_n$ occurs between $j$ and $n+1-j$ for all $j\in V(P_n)$. In particular, if $n+1\in\{2^t,2p\}$, then $j$ and $n+1-j$ belong to the same parts in $P_n$, and so PGST occurs between them relative to $\{\tau_m\}\subseteq 2\pi\Z$ by Theorem~\ref{thm:pgst}. 
%We note however that if $n+1\in\{2^t,2p\}$ and $k+1\in\{2^{t'},2p'\}$ with $n\neq k$, then $P_n$ and $P_k$ have PGST between $j$ and $n+1-j$ for all $j\in V(P_n)$ relative to $\{\tau_m\},\{\tau_\ell\}\subseteq 2\pi\Z$ respectively, but we do not know if $\{\tau_m\},\{\tau_\ell\}$ contain a common subsequence $\{\tau_r\}$ (if such a subsequence exists, then $P_n$ and $P_k$ have PGST relative to the same sequence $\{\tau_r\}$).
\end{ex}

\begin{ex}
\label{ex:path2}
Let $M=A$. We show that $P_4$ admits PGST between end vertices $u$ and $v$ relative to $\{\tau_m\}\subseteq (2\Z+1)\pi$. Note that $u$ and $v$ belong to different parts, and the positive eigenvalues of $P_4$ are linearly independent over $\Q$. PGST occurs between $u$ and $v$ in $P_4$ if and only if
\begin{center}
$e^{i\tau(\frac{1+\sqrt{5}}{2})}\approx e^{i\tau(\frac{1-\sqrt{5}}{2})}\approx-e^{i\tau(\frac{-1+\sqrt{5}}{2})}\approx-e^{-i\tau(\frac{1+\sqrt{5}}{2})}$.
\end{center}
Equivalently, $e^{i\tau(1+\sqrt{5})}\approx e^{i\tau}\approx-e^{i\tau\sqrt{5}}\approx-1$, which holds if and only if for each $\epsilon>0$, $|t-\pi|<\epsilon$ and $|\sqrt{5}t|<\epsilon$ both have a solution modulo $2\pi$. Since $1$ and $\sqrt{5}$ are linearly independent over $\Q$, Theorem~\ref{thm:kron} yields a solution to this system. So PGST occurs between $u$ and $v$ in $P_4$ relative to $\{\tau_m\}\subseteq (2\Z+1)\pi$.
\end{ex}

We also mention some results due to Pal. %(see Theorem 2.7 and Corollary 2.9 in \cite{pal2021quantum}, resp).
A \textit{circulant graph} is a Cayley graph $\operatorname{Cay}(\Z_n,S)$. %A \textit{gcd-set} in $\Z_n$ is a set $\bigcup_{d\in D}S_n(d)$, where $d$ is
For a divisor $1\leq d<n$ of $n$, %$D$ is a nonempty set of proper divisors of $n$, and 
we let $S_n(d)=\{x\in\Z_n:\operatorname{gcd}(x,n)=d\}$.

\begin{ex}
\label{ex:Cn}
Let $n=2^k$ and $\operatorname{Cay}(\Z_n,S)$ be a non-integral circulant graph. If $d$ is the least among all the divisors of $n$ such that $S\cap S_n(d)\subsetneq S_n(d)$ and $|S\cap S_n(d)|\equiv 2 \pmod{4}$, then $\operatorname{Cay}(\Z_n,S)$ and $C_{n/d}$ admit PGST between $u$ and $v=u+\frac{n}{2}$ relative to the same sequence $\{\tau_m\}\subseteq 2\pi\Z$ \cite[Theorem 2.7]{pal2021quantum}. 
\end{ex}

If $n=2^k$, then $\operatorname{Cay}(\Z_n,\{1,-1\})$, which is the cycle $C_n$, admits PGST relative to some $\{\tau_m\}\subseteq 2\pi\Z$ by the above example. Since the positive eigenvalues of $C_n$ with $n=2^k$ are also linearly independent over $\Q$ \cite{Pal2017}, we recover the preceding statement using Theorem~\ref{thm:pgst}(2).

\begin{ex}
\label{ex:Cn1}
Let $n=2^k$. If each divisor $d$ of $n$ satisfies $|S\cap S_n(d)|\equiv 0 \pmod{4}$, then $\operatorname{Cay}(\Z_n,S\cup\{\frac{n}{2}\})$ and $\operatorname{Cay}(\Z_n,S\cup\{\frac{n}{4},\frac{3n}{4}\})$ admit PGST between $u$ and $v=u+\frac{n}{2}$ relative to the same sequence $\{\tau_m\}\subseteq (2\Z+1)\frac{\pi}{2}$ \cite[Corollary 2.9]{pal2021quantum}.
\end{ex}

\subsection{Vertex sedentariness}
A vertex $u$ is \textit{$C$-sedentary} in $X$ if $\inf_{t>0}\lvert U_X(t)_{u,u}\rvert\geq C$ for some constant $0<C\leq 1$. A vertex $u$ is \textit{not sedentary} in $X$ if $\inf_{t>0}\lvert U_X(t)_{u,u}\rvert=0$. If $C$ is not important, then we say sedentary in lieu of $C$-sedentary.

The following sufficient condition for sedentariness will be useful later \cite[Lemma 9]{monterde2026new}. 

\begin{theorem}
\label{thm:sed}
If $(E_{\theta})_{u,u}>\frac{1}{2}$, 
then $u$ is sedentary in $X$ with $|U_X(t)_{u,u}|\geq 2(E_{\theta})_{u,u}-1$. 
\end{theorem}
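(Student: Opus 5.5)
The plan is a direct triangle-inequality estimate based on the spectral decomposition $U_X(t)=\sum_{\lambda\in\spec(M)}e^{it\lambda}E_\lambda$. Fix $t$, and split off the term belonging to the distinguished eigenvalue $\theta$. Writing $a=(E_\theta)_{u,u}$, this gives
\begin{equation*}
U_X(t)_{u,u}=e^{it\theta}a+\sum_{\lambda\neq\theta}e^{it\lambda}(E_\lambda)_{u,u}.
\end{equation*}
The first term has modulus exactly $a$. The task is therefore to show that the remaining sum has modulus at most $1-a$. The reverse triangle inequality then gives $|U_X(t)_{u,u}|\geq a-(1-a)=2a-1$.

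The key step is to control the diagonal entries $(E_\lambda)_{u,u}$. Since each $E_\lambda$ is a real symmetric idempotent, we have
\begin{equation*}
(E_\lambda)_{u,u}=\be_u^TE_\lambda\be_u=\be_u^TE_\lambda^TE_\lambda\be_u=\|E_\lambda\be_u\|^2\geq 0.
\end{equation*}
Because the $E_\lambda$ sum to $I$, these nonnegative numbers satisfy $\sum_{\lambda}(E_\lambda)_{u,u}=1$. It follows that
\begin{equation*}
\Big|\sum_{\lambda\neq\theta}e^{it\lambda}(E_\lambda)_{u,u}\Big|\leq\sum_{\lambda\neq\theta}(E_\lambda)_{u,u}=1-a.
\end{equation*}
Combining this with the first paragraph gives $|U_X(t)_{u,u}|\geq 2a-1$ for every real $t$, and in particular for every $t>0$.

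Taking the infimum over $t>0$ then yields $\inf_{t>0}|U_X(t)_{u,u}|\geq 2(E_\theta)_{u,u}-1$. The hypothesis $(E_\theta)_{u,u}>\frac12$ makes the constant $C=2(E_\theta)_{u,u}-1$ lie in $(0,1]$, so $u$ is $C$-sedentary by definition.

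The only point needing care is the nonnegativity of the diagonal entries of the idempotents. Without it, the absolute values $|(E_\lambda)_{u,u}|$ need not sum to $1-a$, and the bound would fail. Nonnegativity follows from $E_\lambda$ being an orthogonal projection, so I expect no real obstacle. The argument is essentially a one-line application of the triangle inequality.
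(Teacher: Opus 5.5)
Your proof is correct: $(E_\lambda)_{u,u}=\|E_\lambda\be_u\|^2\geq 0$ together with $\sum_\lambda (E_\lambda)_{u,u}=1$ gives $|U_X(t)_{u,u}|\geq (E_\theta)_{u,u}-\big(1-(E_\theta)_{u,u}\big)$ for every $t$, and the resulting constant satisfies $0<2(E_\theta)_{u,u}-1\leq 1$. The paper gives no proof of its own and simply cites \cite[Lemma 9]{monterde2026new}; your triangle-inequality argument is the natural one for that lemma.
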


\begin{rem}
Sedentariness and PGST are known to be mutually exclusive \cite{Monterde2023}. That is, if PGST occurs between $u$ and $v$, then none of them are sedentary, and if $u$ is sedentary, then it cannot be involved in PGST. However, vertex sedentariness and peak state transfer are not necessarily mutually exclusive, i.e., it is possible for a vertex to be sedentary and also be involved in peak state transfer.
\end{rem}

We now establish a connection between sedentariness and peak state transfer.

\begin{prop}
\label{prop:sedpeak}
Let $u,v$ be two vertices in $X$. If vertex $u$ is $C$-sedentary, then $|U_X(t)_{u,v}|\leq \sqrt{1-C^2}$ for all $t\in\R$. Additionally, if peak state transfer occurs between $u$ and $v$ in $X$ at time $\tau$, then for all $t\in\R$,
\begin{center}
$|U_X(t)_{u,v}|\leq |U_X(\tau)_{u,v}|=B(X)_{u,v}\leq \sqrt{1-C^2}$.
\end{center}
\end{prop}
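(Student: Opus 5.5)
The plan is to use unitarity of $U_X(t)$. Its rows are unit vectors, so for every $t\in\R$,
\begin{equation*}
\sum_{w\in V(X)}|U_X(t)_{u,w}|^2=1 .
\end{equation*}
Dropping all terms except $w=u$ and $w=v$ gives $|U_X(t)_{u,u}|^2+|U_X(t)_{u,v}|^2\leq 1$. (If $u=v$ we may assume $C<1$ or treat the case separately; the statement is meant for $u\neq v$.)

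First I will record a lower bound valid for all real $t$. Since $u$ is $C$-sedentary, $|U_X(t)_{u,u}|\geq C$ for all $t>0$. For $t<0$ I will use that $M$ is real symmetric, so $U_X(-t)=\overline{U_X(t)}$ and $|U_X(-t)_{u,u}|=|U_X(t)_{u,u}|$. At $t=0$ the bound is trivial because $|U_X(0)_{u,u}|=1$. Substituting into the unitarity inequality yields
\begin{equation*}
|U_X(t)_{u,v}|^2\leq 1-C^2 \quad\text{for all } t\in\R,
\end{equation*}
which is the first claim.

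For the second claim, suppose peak state transfer occurs at time $\tau$. By (\ref{eq:peakst}), $|U_X(\tau)_{u,v}|=|\gamma|B(X)_{u,v}=B(X)_{u,v}$. By (\ref{eq:bounding}), $|U_X(t)_{u,v}|\leq B(X)_{u,v}=|U_X(\tau)_{u,v}|$ for all $t$. Applying the first claim at $t=\tau$ gives $B(X)_{u,v}\leq\sqrt{1-C^2}$, and chaining these yields the displayed inequality.

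There is no real obstacle here. The only care needed is extending the infimum over $t>0$ to all real $t$, which the conjugation symmetry handles.
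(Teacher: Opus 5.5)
Your proof is correct and follows the paper's argument: unitarity gives $|U_X(t)_{u,u}|^2+|U_X(t)_{u,v}|^2\leq 1$, sedentariness bounds the first term below by $C$, and the bound $|U_X(t)_{u,v}|\leq B(X)_{u,v}$ together with the peak condition gives the chain. Two details you handle that the paper leaves implicit are worth keeping. You extend the bound from $t>0$ to all $t\in\R$ via $U_X(-t)=\overline{U_X(t)}$, and you restrict to $u\neq v$. That restriction is genuinely needed: for $u=v$ the claim fails at $t=0$, since $1>\sqrt{1-C^2}$ whenever $C>0$, so assuming $C<1$ would not rescue it.
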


\begin{proof}
By the unitarity of $U_X(t)$, we get $|U_X(t)_{u,u}|^2+|U_X(t)_{u,v}|^2\leq 1$ for all $t\in\R$. Since $u$ is $C$-sedentary, we have $|U_X(t)_{u,u}|\geq C$ for all $t\in\R$. So, $|U_X(t)_{u,v}|^2\leq 1-|U_X(t)_{u,u}|^2\leq 1-C^2$ for all $t\in\R$. Thus, $|U_X(t)_{u,v}|\leq \sqrt{1-C^2}$ for all $t\in\R$. As peak state transfer also occurs between $u$ and $v$ at time $\tau$, $B(X)_{u,v}$ is attained by $|U_X(t)_{u,v}|$ at time $\tau$. Combining these observations yields the result.
\end{proof}

We now demonstrate that $\sqrt{1-C^2}$ is an asymptotically tight upper bound for $B(X)_{u,v}$. 

\begin{ex}
\label{ex:sedpeakst}
For $n\geq 1$, let $G_n$ be the graph obtained from $K_{2,n}$ by attaching a pendent vertex to each vertex in the part of size $n$. Let $u$ and $v$ be the twins in $G_n$ as depicted in Fig.~\ref{fig:Gn}. In \cite[Section~6.2]{coutinho2025peak}, it was shown that peak state transfer occurs between $u$ and $v$ in $G_n$ with $B(G_n)_{u,v}=\frac{2n}{2n+1}$. It was also shown that the spectral idempotent $E_0$ for $G_n$ satisfies $(E_0)_{u,u}=(E_0)_{v,v}=\frac{n+1}{2n+1}>\frac{1}{2}$. By Theorem
\ref{thm:sed}, 
$u$ and $v$ are $C(n)$-sedentary in $G_n$ for all $n\geq 1$, where $C(n)=\frac{1}{2n+1}$. As $n\rightarrow\infty$, we get
\begin{center}
    $1\leq \frac{\sqrt{1-C(n)^2}}{B(G_n)_{u,v}}=\frac{\sqrt{1-1/(2n+1)^2}}{2n/(2n+1)}=%\frac{2\sqrt{n(n+1)}/(2n+1)}{2n/(2n+1)}=
\frac{\sqrt{n(n+1)}}{n}\rightarrow 1.$
\end{center}
\end{ex}

For most recent work on sedentariness, see \cite{meagher2026sedentary}.

\begin{figure}\label{fig:Gn}
	\begin{center}
		\begin{tikzpicture}[scale=.37,auto=left]
		\tikzstyle{every node}=[circle, thick, fill=white, scale=0.7]
		
		% G_n: n copies of K_2 (t_i -- m_i), plus twins u, v adjacent to every m_i
		\foreach \n/\c in {1/0, 2/9, 3/18.5, 4/28.9}{
			\node[draw,fill=cyan] (u\n) at (\c-1.3,-1.6) {};
			\node[draw,fill=cyan] (v\n) at (\c+1.3,-1.6) {};
			\node at (\c-1.3,-2.7) {$u$};
			\node at (\c+1.3,-2.7) {$v$};
			\node at (\c,-4.3) {$G_{\n}$};
			\foreach \i in {1,...,\n}{
				\pgfmathsetmacro{\x}{\c+1.8*(\i-(\n+1)/2)}
				\node[draw] (m\n-\i) at (\x,0.8) {};
				\node[draw] (t\n-\i) at (\x,2.8) {};
				\draw [thick, black!70] (t\n-\i)--(m\n-\i);
				\draw [thick, black!70] (u\n)--(m\n-\i)--(v\n);
			}
		}
		\end{tikzpicture}
		\vspace{-10pt}
	\end{center}	
	\caption{The graphs $G_n$ with the twin vertices $u$ and $v$ marked blue, both of which are $C(n)$-sedentary.}
\end{figure}

\section{Joins}
\label{sec:join}

Let $X$ and $Y$ be graphs such that $|V(X)|=n_1$, $|V(Y)|=n_2$, and $n=n_1+n_2$. In this section, we study the behavior of PGST and peak state transfer under the join operation. We follow the discussion in \cite{monterde2025quantum}. For further investigation on quantum walks on join graphs, see \cite{kirkland2023quantum}.

The \textit{join} of $X$ and $Y$, denoted $X\lor Y$, is the graph with vertex set $V(X)\cup V(Y)$ such that
\begin{equation*}
\label{eq:adjlapjoin}
    A(X\lor Y) =    \begin{bmatrix}
                        A(X) & \mathbf{J} \\
                        \mathbf{J} & A(Y)
                    \end{bmatrix}\quad \text{and} \quad L(X\lor Y) =    \begin{bmatrix}
                        L(X)+n_2I & -\mathbf{J}\\
                        -\mathbf{J}& L(Y)+n_1I
                    \end{bmatrix}.
\end{equation*}

Suppose $M=L$. If $X$ and $Y$ are both connected, then we may write
\begin{equation}
\label{eq:joinL}
U_{X\vee Y}(t)=\frac{1}{n}J+e^{i tn}E_{n}
+\sum_{\lambda>0}e^{it(\lambda+n_2)}\left[\begin{array}{cc} E_{\lambda}&O \\ O&O\end{array} \right]+\sum_{\mu>0}e^{it(\mu+n_1)}\left[\begin{array}{cc} O&O \\ O&F_{\mu}\end{array} \right], 
\end{equation}
where $E_{n}=\frac{1}{n_1n_2n}\left[ \begin{array}{ccccc} n_2^2J&-n_1n_2J \\ -n_1n_2J&n_1^2J\end{array} \right]$ and $F_{\mu}$ is the spectral idempotent of $\mu\in\spec(L(X_2))\backslash\{0\}$. We include $e^{itn_2}\left[\begin{array}{cc} E_{0}-\frac{1}{n_1}J&O\\ O&O\end{array} \right]$ (resp., $e^{i tn_1}\left[\begin{array}{cc} O&O \\ O&F_{0}-\frac{1}{n_2}J\end{array} \right]$) as a term in the third (resp., fourth) summand in (\ref{eq:joinL}) whenever $X$ (resp., $Y$) is disconnected. 

For the case $M=A$, we further assume that $X$ and $Y$ are $k$-regular and $\ell$-regular, respectively. If $X$ and $Y$ are both connected, then we may write 
\begin{equation}
\label{eq:joinA}
U_{X\vee Y}(t)=e^{i t\lambda^+}E_{\lambda^+}+e^{i t\lambda^-}E_{\lambda^-}+\sum_{\lambda<k}e^{i t\lambda}\left[\begin{array}{cc} E_{\lambda}&O \\ O&O\end{array} \right]+\sum_{\mu<\ell}e^{i t\mu}\left[\begin{array}{cc} O&O \\ O&F_{\mu}\end{array} \right],     
\end{equation}
where $\lambda^{\pm} = \frac{1}{2}\left(k+\ell\pm \sqrt{\Delta}\right)$, $E_{\lambda^+}=\frac{1}{n_1\sqrt{\Delta}(k-\lambda^-)}\bu\bu^\top$, $E_{\lambda^-}=\frac{1}{n_1\sqrt{\Delta}(\lambda^+-k)}\bv\bv^\top$, $\Delta=(k-\ell)^2+4n_1n_2$, and the vectors $\bu:=\left[ \begin{array}{ccccc} (k-\lambda^{-})\one_{n_1} \\ n_1\one_{n_2}\end{array} \right]$ and $\bv:=\left[ \begin{array}{ccccc} (k-\lambda^{+})\one_{n_1} \\ n_1\one_{n_2}\end{array} \right]$ are eigenvectors associated with 
$\lambda^{+}$ and $\lambda^{-}$, respectively. 
We include the term $e^{i tk}\left[\begin{array}{cc} E_{k}-\frac{1}{n_1}J&O \\ O&O\end{array} \right]$ (resp., $e^{i t\ell}\left[\begin{array}{cc} O&O \\ O&F_{\ell}-\frac{1}{n_2}J\end{array} \right]$) in the third (resp., fourth) summand in equation (\ref{eq:joinA}) whenever $X$ (resp., $Y$) is disconnected.

\begin{lemma}
\label{lem:joinbound}
Let $M\in\{A,L\}$ and consider  $X\vee Y$, where we assume that $X,Y$ are $k$- and $\ell$-regular graphs, respectively, whenever $M=A$. If $u,v\in V(X)$, then $B(X\vee Y)_{u,v}=B(X)_{u,v}$. Otherwise, $B(X\vee Y)_{u,v}=\frac{2}{n}$ if $M=L$ and $B(X\vee Y)_{u,v}=\frac{2}{\sqrt{\Delta}}$ otherwise.  
\end{lemma}

\begin{proof}
Let $u,v\in V(X)$. If $M=L$, then (\ref{eq:joinL}) yields
$B(X\vee Y)_{u,v}=\frac{1}{n}+\frac{n_2}{n_1n}+B(X)_{u,v}-\frac{1}{n_1}=B(X)_{u,v}$. If $M=A$, then (\ref{eq:joinA}) yields $B(X\vee Y)_{u,v}=\frac{k-\lambda^-}{n_1\sqrt{\Delta}}-\frac{k-\lambda^+}{n_1\sqrt{\Delta}}+B(X)_{u,v}-\frac{1}{n_1}=B(X)_{u,v}$. Combining these two cases proves the case $u,v\in V(X)$, while the other case is immediate from (\ref{eq:joinL}) and (\ref{eq:joinA}).
\end{proof}

We now provide sufficient conditions such that PGST and peak state transfer in a graph are preserved in the join (with equal transfer probabilities for the case of peak state transfer).

\begin{theorem}
\label{thm:peakstjoin}
Let $M\in\{A,L\}$ and consider $X\vee Y$, where we assume that $X,Y$ are $k$- and $\ell$-regular graphs, respectively, whenever $M=A$. If $u,v\in V(X\vee Y)$, then the following hold.
\begin{enumerate}
\item Suppose that peak state transfer occurs between $u$ and $v$ in $X$ at time $\tau$. If $M=A$, then peak state transfer occurs between $u$ and $v$ in $X\vee Y$ at time $\tau$ if and only if $e^{i\tau \lambda^+}=e^{i\tau \lambda^-}=e^{i\tau k}$ (equivalently, $\{k,\lambda^+,\lambda^-\}$ satisfies the ratio condition and $\lambda^+,\lambda^-$ are integers if $k\neq \ell$), in which case $U_{X\vee Y}(\tau)_{u,v}=e^{i\tau k}B(X)_{u,v}$. If $M=L$, then peak state transfer occurs between $u$ and $v$ in $X\vee Y$ at time $\tau$ if and only if $\tau=\frac{2r\pi}{\operatorname{gcd}(n_1,n_2)}$ for some integer $r$, in which case $U_{X\vee Y}(\tau)_{u,v}= B(X)_{u,v}$.
\item Suppose PGST occurs between $u$ and $v$ in $X$ relative to $\{\tau_m\}$. If $M=A$, then PGST occurs between $u$ and $v$ in $X\vee Y$ at time $\tau$ if and only if $\lim_{m\rightarrow\infty}e^{i\tau_m\lambda^+}=\lim_{m\rightarrow\infty}e^{i\tau_m\lambda^-}=\lim_{m\rightarrow\infty}e^{i\tau_mk}=\gamma$, in which case $\lim_{m\rightarrow\infty}U_{X\vee Y}(\tau_m)_{u,v}=\gamma$. If $M=L$, then PGST occurs between $u$ and $v$ in $X\vee Y$ at time $\tau$ if and only if $\lim_{m\rightarrow\infty}e^{i\tau_m n}=\lim_{m\rightarrow\infty}e^{i\tau_m n_2}=1$, in which case $\lim_{m\rightarrow\infty}U_{X\vee Y}(\tau_m)_{u,v}=1$.
\end{enumerate}    
\end{theorem}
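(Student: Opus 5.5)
Throughout, $u,v\in V(X)$; this is implicit in the hypothesis, since the transfer is assumed to occur in $X$. The plan is to compute $U_{X\vee Y}(t)_{u,v}$ explicitly from the decompositions (\ref{eq:joinA}) and (\ref{eq:joinL}), and to compare it with $U_X(t)_{u,v}$ written through the spectral decomposition of $X$.

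\textbf{Case $M=A$.} The $(u,v)$ entry of $E_{\lambda^+}$ is $\frac{k-\lambda^-}{n_1\sqrt\Delta}$ and that of $E_{\lambda^-}$ is $\frac{\lambda^+-k}{n_1\sqrt\Delta}$. These two numbers are positive and sum to $\frac1{n_1}$. The remaining terms reproduce $U_X(t)_{u,v}-\frac{1}{n_1}e^{itk}$, using the convention for disconnected $X$ in which the term $E_k-\frac1{n_1}J$ appears. Hence
\[
U_{X\vee Y}(t)_{u,v}=U_X(t)_{u,v}+\alpha\big(e^{it\lambda^+}-e^{itk}\big)+\beta\big(e^{it\lambda^-}-e^{itk}\big),
\]
with $\alpha,\beta>0$ and $\alpha+\beta=\frac1{n_1}$.

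\textbf{Case $M=L$.} In the same way,
\[
U_{X\vee Y}(t)_{u,v}=e^{itn_2}U_X(t)_{u,v}+\tfrac1n-\tfrac1{n_1}e^{itn_2}+\tfrac{n_2}{n_1n}e^{itn}.
\]
Here the coefficients $\frac1n$, $-\frac1{n_1}$ and $\frac{n_2}{n_1n}$ sum to $0$.

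\textbf{Part 1.} By Lemma~\ref{lem:joinbound}, $B(X\vee Y)_{u,v}=B(X)_{u,v}$, so peak transfer in the join at $\tau$ means $|U_{X\vee Y}(\tau)_{u,v}|=B(X)_{u,v}$. The key tool is the triangle-inequality equality case. Write $U_{X\vee Y}(\tau)_{u,v}$ as a sum over all idempotents of the join. By Lemma~\ref{lem:joinbound}, the absolute values of the $(u,v)$ entries add up to exactly $B(X)_{u,v}$. Equality $|\sum c_j e^{i\tau\mu_j}|=\sum|c_j|$ therefore forces all nonzero terms to share a common phase.

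For $M=A$, the entries of $E_{\lambda^\pm}$ are positive. Moreover, since $U_X(\tau)_{u,v}=e^{i\tau k}B(X)_{u,v}$, the $E_k$-part of $X$ (positive) already aligns each $X$-eigenvalue term with the phase $e^{i\tau k}$. Thus peak transfer in the join holds iff $e^{i\tau\lambda^\pm}=e^{i\tau k}$. Conversely, if these equalities hold, the displayed formula gives $U_{X\vee Y}(\tau)_{u,v}=U_X(\tau)_{u,v}=e^{i\tau k}B(X)_{u,v}$. The parenthetical equivalence is a standard ratio-condition argument. The differences $\lambda^\pm-k$ must lie in $\frac{2\pi}{\tau}\Z$. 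If $k\ne\ell$, then $\lambda^++\lambda^-=k+\ell$ is an integer and $\lambda^\pm$ are algebraic conjugates, so rational dependence forces $\sqrt\Delta\in\Q$, hence $\lambda^\pm\in\Z$.

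For $M=L$, the phases that must agree are $1$ (from $\frac1nJ$ and the $E_0$ term of $X$), $e^{i\tau n}$, and $e^{i\tau n_2}$ multiplying the $X$ terms. We need $e^{i\tau n_2}=1$ and $e^{i\tau n}=1$, i.e.\ $\tau n_1,\tau n_2\in2\pi\Z$, which is equivalent to $\tau\in\frac{2\pi}{\gcd(n_1,n_2)}\Z$. Again the formula then gives $U_{X\vee Y}(\tau)_{u,v}=B(X)_{u,v}$.

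\textbf{Part 2.} Take limits in the same displayed identities. Sufficiency is immediate: the correction terms vanish, and $U_X(\tau_m)_{u,v}\to\gamma$ with $\gamma=\lim e^{i\tau_mk}$ (respectively $\gamma=1$). For necessity, pass to a subsequence along which all relevant exponentials converge. Unit-modulus of the limit, combined with $|U_{X\vee Y}(t)_{u,v}|\le B(X\vee Y)_{u,v}$ and the fact that the full $(u,v)$ sum has weights summing to $B(X)_{u,v}=1$, again forces phase alignment in the limit by the equality case of the triangle inequality. This gives $\lim e^{i\tau_m\lambda^\pm}=\lim e^{i\tau_m k}$, respectively $\lim e^{i\tau_m n}=\lim e^{i\tau_m n_2}=1$.

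The main obstacle is bookkeeping in the equality case. One must check that every term carrying the phase $e^{i\tau\lambda^\pm}$ (or $e^{i\tau n}$) has a nonzero coefficient, which holds since these coefficients are strictly positive. One must also check that the reference phase is correctly pinned: $e^{i\tau k}$ via the Perron idempotent of $X$ for $M=A$, and $1$ via $\frac1nJ$ for $M=L$.
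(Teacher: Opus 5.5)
You follow the paper's own proof almost step for step. You read off the $(u,v)$ entry from (\ref{eq:joinA}) or (\ref{eq:joinL}), collapse the $X$-part to $e^{i\tau k}\big(B(X)_{u,v}-\frac1{n_1}\big)$ (resp.\ $e^{i\tau n_2}\big(B(X)_{u,v}-\frac1{n_1}\big)$) using peak transfer in $X$, and apply the equality case of the triangle inequality together with Lemma~\ref{lem:joinbound}. The gap, which you share with the paper, sits exactly in the step you call bookkeeping.

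The summands of (\ref{eq:joinA}) and (\ref{eq:joinL}) need not correspond to distinct eigenvalues of $X\vee Y$. For $M=A$ one can have $\lambda^-=\theta\in\spec(A(X))$. For $M=L$ one can have $n_1\in\spec(L(X))$, i.e.\ $X^c$ disconnected, so the $X$-eigenvalue $n_1$ is shifted to $n_1+n_2=n$. In either case the genuine spectral idempotent of $X\vee Y$ for that eigenvalue is the sum of two of the listed matrices. Its $(u,v)$ entry, e.g.\ $\frac{\lambda^+-k}{n_1\sqrt\Delta}+(E_\theta)_{u,v}$, can have modulus strictly smaller than $\frac{\lambda^+-k}{n_1\sqrt\Delta}+|(E_\theta)_{u,v}|$. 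Then $B(X\vee Y)_{u,v}<B(X)_{u,v}$ and Lemma~\ref{lem:joinbound} is false. Peak transfer in the join no longer means $|U_{X\vee Y}(\tau)_{u,v}|=B(X)_{u,v}$, and your claim that ``the absolute values of the $(u,v)$ entries add up to exactly $B(X)_{u,v}$'' holds only for the unmerged terms.

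This breaks the ``only if'' half of part~1. Take $X=K_2$ and $Y=K_4$, so $X\vee Y=K_6$, with $u,v\in V(K_2)$ and $\tau=\frac{\pi}{2}$; $K_2$ has PST at this time for both $A$ and $L$. For $M=A$ you get $\lambda^+=5$ and $\lambda^-=-1\in\spec(A(K_2))$, and $U_{K_6}(t)_{u,v}=\frac16(e^{5it}-e^{-it})$, so $U_{K_6}(\frac{\pi}{2})_{u,v}=\frac{i}{3}$. For $M=L$, $U_{K_6}(t)_{u,v}=\frac16(1-e^{6it})$, so $U_{K_6}(\frac{\pi}{2})_{u,v}=\frac13$. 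Since $B(K_6)_{u,v}=\frac13$ (Proposition~\ref{prop:Buv}), peak transfer occurs in both cases. Yet $e^{i\tau\lambda^-}=-i\neq i=e^{i\tau k}$, and $\frac{\pi}{2}\notin\frac{2\pi}{\gcd(2,4)}\Z$.

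Your argument does correctly prove the ``if'' half of part~1, because aligned phases give $|U_{X\vee Y}(\tau)_{u,v}|=B(X)_{u,v}\geq B(X\vee Y)_{u,v}$. It also proves all of part~2. There you only need $|U_{X\vee Y}(\tau_m)_{u,v}|\to 1=B(X)_{u,v}$ together with the triangle inequality over the unmerged terms of your displayed identity, and $B(X\vee Y)$ never enters.

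To recover the ``only if'' in part~1 you need an extra hypothesis excluding the collision, such as $\lambda^-\notin\spec(A(X))$ for $M=A$ and $X^c$ connected for $M=L$. You also need $B(X)_{u,v}>\frac{1}{n_1}$, which follows from Proposition~\ref{prop:Buv} when $X$ is connected with $n_1\geq 2$. This guarantees that some nonzero $X$-term pins the reference phase, since the Perron idempotent of $X$ is not itself a summand of the join.
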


\begin{proof}
To prove 1, let $u,v\in V(X)$ and suppose that peak state transfer occurs between $u$ and $v$ in $X$ at time $\tau$. Then $U_X(\tau)_{u,v}=\gamma B(X)_{u,v}$ for some unit complex number $\gamma$. For $M=A$, we have $\gamma=e^{i\tau k}$. Since $U_X(\tau)_{u,v}=e^{i\tau k}B(X)_{u,v}$, (\ref{eq:joinA}) implies that $|U_{X\vee Y}(\tau)_{u,v}|$ equals
\begin{center}
$\big|\frac{e^{i\tau\lambda^+}(k-\lambda^-)}{n_1\sqrt{\Delta}}-\frac{e^{i\tau\lambda^-}(k-\lambda^+)}{n_1\sqrt{\Delta}}+e^{i\tau k}\big(B(X)_{u,v}-\frac{1}{n_1}\big)\big|
\leq \frac{k-\lambda^-}{n_1\sqrt{\Delta}}-\frac{k-\lambda^+}{n_1\sqrt{\Delta}}+B(X)_{u,v}-\frac{1}{n_1},$
\end{center}
where the right-hand side equals $B(X)_{u,v}$. Thus, $U_{X\vee Y}(\tau)_{u,v}=B(X)_{u,v}$ if and only if $e^{i\tau \lambda^+}=e^{i\tau \lambda^-}=e^{i\tau k}$. Equivalently, $\{k,\lambda^+,\lambda^-\}$ satisfies the ratio condition, in which case $\lambda^+$ and $\lambda^-$ are integers if $k\neq \ell$. 
Now, if $M=L$, then $\gamma=1$. Since $U_X(\tau)_{u,v}= B(X)_{u,v}$, (\ref{eq:joinL}) implies that $|U_{X\vee Y}(\tau)_{u,v}|$ equals
\begin{center}
$\big|\frac{1}{n}+\frac{n_2e^{i\tau n}}{n_1n}+e^{i\tau n_2}\big(B(X)_{u,v}-\frac{1}{n_1}\big)\big|\leq \frac{1}{n}+\frac{n_2}{n_1n}+B(X)_{u,v}-\frac{1}{n_1}=B(X)_{u,v}$.
\end{center}
Thus, $U_{X\vee Y}(\tau)_{u,v}=B(X)_{u,v}$ if and only if $e^{i\tau n}=e^{i\tau n_2}=1$, i.e. $\tau$ is an integer multiple of $\frac{2\pi}{\operatorname{gcd}(n_1,n_2)}$.
Combining these two cases with Lemma~\ref{lem:joinbound} yields 1. The proof of 2 follows from 1 by replacing $e^{i\tau k}$, $e^{i\tau \lambda^+}$ and $e^{i\tau \lambda^-}$ with $\lim_{m\rightarrow\infty}e^{i\tau_mk}$, $\lim_{m\rightarrow\infty}e^{i\tau_m\lambda^+}$ and $\lim_{m\rightarrow\infty}e^{i\tau_m\lambda^-}$ respectively when $M=A$, and replacing $e^{i\tau n}$ and $e^{i\tau n_2}$ with $\lim_{m\rightarrow\infty}e^{i\tau_mn}$ and $\lim_{m\rightarrow\infty}e^{i\tau_mn_2}$ respectively when $M=L$.
\end{proof}

\begin{theorem}
\label{thm:peakstjoin1}
Let $u\in V(X)$ and $v\in V(Y)$.
With the assumption in Theorem~\ref{thm:peakstjoin}, we have $U_{X\vee Y}(t)_{u,v}=\frac{e^{it\lambda^+}}{\sqrt{\Delta}}(1-e^{-it\sqrt{\Delta}})$ if $M=A$ and $U_{X\vee Y}(t)_{u,v}=\frac{1}{n}(1-e^{itn})$, otherwise. Thus, peak state transfer occurs between $u$ and $v$ in $X\vee Y$, where $U(\frac{\pi}{\sqrt{\Delta}})_{u,v}=\frac{2e^{i\tau\lambda^+}}{\sqrt{\Delta}}$ if $M=A$ and $U(\frac{\pi}{n})_{u,v}=\frac{2}{n}$ if $M=L$. Moreover, PGST does not occur between $u$ and $v$ in $X\vee Y$.
\end{theorem}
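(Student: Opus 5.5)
The plan is to read off the $(u,v)$ entry directly from the spectral decompositions (\ref{eq:joinL}) and (\ref{eq:joinA}). Take $u\in V(X)$ and $v\in V(Y)$. Every block-diagonal summand in those expansions contributes zero to an off-diagonal-block entry, including the extra terms added when $X$ or $Y$ is disconnected. So only the idempotents built from $\one$-type vectors survive.

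For $M=L$, the surviving terms give $U_{X\vee Y}(t)_{u,v}=\frac1n+e^{itn}\big(-\frac{n_1n_2}{n_1n_2n}\big)=\frac1n(1-e^{itn})$. For $M=A$, the surviving terms give
\[
U_{X\vee Y}(t)_{u,v}=\frac{e^{it\lambda^+}\,n_1(k-\lambda^-)}{n_1\sqrt\Delta(k-\lambda^-)}+\frac{e^{it\lambda^-}\,n_1(k-\lambda^+)}{n_1\sqrt\Delta(\lambda^+-k)}=\frac{1}{\sqrt\Delta}\big(e^{it\lambda^+}-e^{it\lambda^-}\big).
\]
Since $\lambda^+-\lambda^-=\sqrt\Delta$, this equals $\frac{e^{it\lambda^+}}{\sqrt\Delta}(1-e^{-it\sqrt\Delta})$. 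The only real bookkeeping is checking the normalizations of $E_{\lambda^\pm}$ and the signs of $k-\lambda^\pm$. This is the one place an error could slip in, and a quick cross-check is that the entry vanishes at $t=0$.

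Next, compare with Lemma~\ref{lem:joinbound}, which gives $B(X\vee Y)_{u,v}=\frac2n$ when $M=L$ and $\frac{2}{\sqrt\Delta}$ when $M=A$. The modulus of the entry is $\frac1n|1-e^{itn}|$, respectively $\frac{1}{\sqrt\Delta}|1-e^{-it\sqrt\Delta}|$. This attains $2/n$ (resp.\ $2/\sqrt\Delta$) exactly when $e^{itn}=-1$ (resp.\ $e^{-it\sqrt\Delta}=-1$). Taking $\tau=\pi/n$ (resp.\ $\tau=\pi/\sqrt\Delta$) gives $U(\tau)_{u,v}=\frac2n$ (resp.\ $\frac{2e^{i\tau\lambda^+}}{\sqrt\Delta}$). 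This is peak state transfer in the sense of (\ref{eq:peakst}), with $\gamma=1$ (resp.\ $\gamma=e^{i\tau\lambda^+}$).

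Finally, to rule out PGST, use (\ref{eq:bounding}): $|U_{X\vee Y}(t)_{u,v}|\le B(X\vee Y)_{u,v}$ for all $t$. For $M=L$, $\frac2n<1$ because $n\ge3$ whenever both graphs are nonempty and $n_1n_2\ge2$; the degenerate case $n=2$, i.e.\ $K_2$, would have to be excluded or handled separately. For $M=A$, $\frac{2}{\sqrt\Delta}<1$ iff $(k-\ell)^2+4n_1n_2>4$, which holds unless $n_1=n_2=1$. So, apart from the trivial $K_2=K_1\vee K_1$, the modulus is bounded away from $1$ and no sequence $\tau_m$ can give $|U(\tau_m)_{u,v}|\to1$. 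The main obstacle is this edge case: the statement presumably tacitly excludes $K_2$, where PST actually occurs, and I would note that exclusion explicitly.
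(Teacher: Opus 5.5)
Your proposal is correct and follows the paper's route: the paper's proof simply reads the $(u,v)$ entry off (\ref{eq:joinA}) and (\ref{eq:joinL}) and compares it with Lemma~\ref{lem:joinbound}, and your explicit check of the normalizations of $E_{\lambda^\pm}$ and $E_n$ fills in what the paper calls immediate. Your caveat about $K_2=K_1\vee K_1$ is also well taken and is not addressed in the paper: there $B(X\vee Y)_{u,v}=1$ and PST occurs at $\pi/2$, which under the paper's definition (\ref{eq:pgst}) counts as PGST, so the no-PGST claim needs $n\ge 3$ (equivalently $\Delta>4$ when $M=A$).
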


\begin{proof}
This is immediate from immediate from (\ref{eq:joinA}) and (\ref{eq:joinL}).
\end{proof}

Using Theorem~\ref{thm:peakstjoin}(1), we produce new infinite families of graphs with peak state transfer.

\begin{ex}
Let $X$ be the Petersen graph, which is a 3-regular graph on 10 vertices. Let $u$ and $v$ be adjacent vertices in $X$. Then $X$ admits peak state transfer between $u$ and $v$ relative to $A$ at time $\tau=\pi$ with $|U_{X}(\tau)_{u,v}|=\frac{8}{15}$ \cite[Section 6.1]{coutinho2025peak}. Since $X$ is regular, this also holds relative to $L$. 
\begin{itemize}
\item Since $\tau=\pi$, Theorem~\ref{thm:peakstjoin}(1) yields peak state transfer in $X\vee Y$  between $u$ and $v$ relative to $L$ at time $\tau$ if and only if $Y$ has an even number of vertices.
\item Let $Y$ be an $\ell$-regular with $n$ vertices. Since $e^{i\tau k}=-1$ Theorem~\ref{thm:peakstjoin}(1) yields peak state transfer relative to $A$ between $u$ and $v$ in $X\vee Y$ at time $\tau$ if and only if $e^{i\tau \lambda^+}=e^{i\tau \lambda^-}=-1$. That is, $\lambda^{\pm}=\frac{1}{2}(3+\ell\pm\sqrt{(\ell-3)^2+40n})$ are odd integers. There are many such graphs $Y$, for example we may take $\ell=3$ and $n=10s^2$ for any integer $s\geq 1$, in which case $\lambda^{\pm}=3\pm 10s$ are both even.
\end{itemize}
\end{ex}

\begin{ex}
\label{ex:knnn}
Let $m\geq 2$ and $X$ be the balanced complete multipartite graph $K_{m,\ldots,m}$ with $k\geq 2$ parts. We may write $X=\bigvee_k O_m=O_m\vee \big(\bigvee_{k-1} O_m\big)$,
where $O_m$ is the empty graph on $m$ vertices and $\bigvee_{k-1} O_m$ is $m(k-2)$-regular with $m(k-1)$ vertices. Note that 
$\Delta=(0-(m(k-2))^2)+4m(m(k-1))=m^2k^2$. We apply Theorem~\ref{thm:peakstjoin1} with $M=A$ (this also holds for $M=L$ since $X$ is regular). If $u$ and $v$ are adjacent in $X$, 
then peak state transfer occurs between them at time $\tau=\frac{\pi}{mk}$ and $|U(\tau)_{u,v}|=\frac{2}{mk}$. In particular, if $m=2$ (resp., $m=1$), then $X$ is the cocktail party graph on $2k$ vertices (resp., $X=K_k$).
\end{ex}

\begin{ex}
\label{ex:kmn}
For all $m,n\geq 1$, the complete bipartite graph $K_{m,n}=O_m\vee O_n$ has peak state transfer between any two adjacent vertices at time $\tau=\frac{\pi}{m+n}$ if $M=L$ and $\tau=\frac{\pi}{2\sqrt{mn}}$ otherwise, by Theorem~\ref{thm:peakstjoin1}. 
\end{ex}

Next, we utilize Theorem~\ref{thm:peakstjoin}(2) to obtain new examples of pretty good state transfer.

\begin{ex}
\label{ex:pgst1}
Let $X$ be graph on $n_1$ vertices, where we assume that $X$ is $k$-regular when $M=A$. Suppose $X$ has PGST between $u$ and $v$ relative to $\{\tau_m\}\subseteq 2\pi\Z$ (see Theorem~\ref{thm:pgst} for sufficient conditions for this to occur). Since $\lim_{m\rightarrow\infty}e^{i\tau_m p}=1$ for any integer $p$, PGST occurs between $u$ and $v$ in $X\vee Y$ relative to $\{\tau_m\}$ for any graph $Y$ when $M=L$. Meanwhile, when $M=A$, if $\lambda^+,\lambda^-$ are integers, then PGST occurs between $u$ and $v$ in $X\vee Y$ relative to $\{\tau_m\}$. For example, if $X=\operatorname{Cay}(\Z_{2^r},S)$ in Example~\ref{ex:Cn}, then PGST occurs between $u$ and $v=u+2^{r-1}$ in $X$ relative to $\{\tau_m\}\subseteq 2\pi\Z$. So if $Y$ is $\ell$-regular with $n_2$ vertices and $(\ell-|S|)^2+2^{r+2}n_2$ is a perfect square, then $X\vee Y$ has PGST between $u$ and $v$ relative to $\{\tau_m\}$.
\end{ex}

\begin{ex}
Suppose the same hypothesis in Example~\ref{ex:pgst1} hold, except we assume  $\{\tau_m\}\subseteq (2\Z+1)\frac{\pi}{2}$. If $n,n_2\equiv d\pmod{4}$ where $d\in\{0,2\}$, then $\lim_{m\rightarrow\infty}e^{i\tau_mn}=\lim_{m\rightarrow\infty}e^{i\tau_mn_2}=\pm 1$, and so PGST occurs between $u$ and $v$ in $X\vee Y$ relative to $\{\tau_m\}$ when $M=L$. For the case $M=A$, if $\lambda^+,\lambda^-$ are integers and $k,\lambda^+,\lambda^-\equiv d\pmod{4}$, where $d\in\{0,2\}$, then $\lim_{m\rightarrow\infty}e^{i\tau_m\lambda^+}=\lim_{m\rightarrow\infty}e^{i\tau_m\lambda^-}=\lim_{m\rightarrow\infty}e^{i\tau_mk}=\pm 1$, and so PGST occurs between $u$ and $v$ in $X\vee Y$ relative to $\{\tau_m\}$. Note that $n_2$ and $\ell$ may be chosen appropriately to obtain $k,\lambda^+,\lambda^-\equiv d\pmod{4}$ (for instance, $n_2$ and $\ell$ may be chosen such that $k=\ell\equiv d\pmod{4}$ and $\sqrt{n_1n_2}\equiv 0\pmod{4}$). By way of example, we may take $X=\operatorname{Cay}(\Z_{n_1},S\cup\{\frac{n_1}{4},\frac{3n_1}{4}\})$ in Example~\ref{ex:Cn1}, where $n_1=2^r$ and $|S\cap S_{n_1}(d)|\equiv 0\pmod{4}$ (this happens if $S$ has four elements from each of $S_{n_1}(d)$ for all $1\leq d<\frac{n_1}{4}$). As $|S|\equiv 0\pmod{4}$, we get $k=|S\cup\{\frac{n_1}{4},\frac{3n_1}{4}\}|\equiv 2\pmod 4$. So, if $\lambda^+,\lambda^-\equiv 2\pmod{4}$, then PGST occurs between $u$ and $v=u+\frac{n}{2}$ in $X\vee Y$ relative to $\{\tau_m\}$. 
\end{ex}

\section{Cartesian products}
\label{sec:cartprod}

We now study the behavior of PGST and peak state transfer under Cartesian products.

The \textit{Cartesian product} of $X$ and $Y$, denoted $X\square Y$, is the graph with vertex set $V(X)\times V(Y)$ such that
\begin{equation*}
A(X\square Y)=A(X)\otimes I+I\otimes A(Y)\quad\text{and} \quad L(X\square Y)=L(X)\otimes I+I\otimes L(Y).
\end{equation*}
If $M\in\{A,L\}$, then it is known that
\begin{equation}
\label{eq:cartprod}
U_{X\square Y}(t)=U_X(t)\otimes U_Y(t).
\end{equation}
If $\mu$ and $\theta$ are eigenvalues of $M(X)$ and $M(Y)$ with associated eigenvectors $\bv$ and $\bw$ respectively, then $\mu+\theta$ is an eigenvalue of $M(X\square Y)$ with associated eigenvector $\bv\otimes \bw$. Therefore, for each $\lambda\in\spec(M(X\square Y))$, we may write the spectral idempotent $E_\lambda$ of $M(X\square Y)$ as
\begin{equation}
\label{eq:cartE}
E_\lambda=\sum_{{\mu\in\spec(M(X)),\ \theta\in\spec(M(Y))\atop \lambda=\mu+\theta} }E_{\mu}\otimes E_{\theta}.
\end{equation}
We now bound the entries of the bounding matrix of $X\square Y$.

\begin{prop}
\label{lem:cartbound}
Let $M\in\{A,L\}$ and consider $X\square Y$. If $u,v\in V(X)$ and $x,y\in V(Y)$, then
\begin{center}
$B(X\square Y)_{(u,x),(v,y)}\leq B(X)_{u,v}\cdot B(Y)_{x,y}.$
\end{center}
\end{prop}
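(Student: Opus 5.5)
The plan is to use the decomposition (\ref{eq:cartE}) of each spectral idempotent of $M(X\square Y)$ together with the triangle inequality. The only ingredient beyond the definitions is that each pair $(\mu,\theta)\in\spec(M(X))\times\spec(M(Y))$ contributes to exactly one eigenvalue of the product, namely $\lambda=\mu+\theta$.

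First, I write out the $((u,x),(v,y))$ entry of $E_\lambda$ using (\ref{eq:cartE}) and the entrywise rule $(P\otimes Q)_{(u,x),(v,y)}=P_{u,v}Q_{x,y}$:
\[
(E_\lambda)_{(u,x),(v,y)}=\sum_{\mu+\theta=\lambda}(E_\mu)_{u,v}(F_\theta)_{x,y}.
\]
Here I write $F_\theta$ for the idempotents of $Y$ to avoid confusion. By the triangle inequality,
\[
\big|(E_\lambda)_{(u,x),(v,y)}\big|\le \sum_{\mu+\theta=\lambda}\big|(E_\mu)_{u,v}\big|\,\big|(F_\theta)_{x,y}\big|.
\]

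Second, I sum over $\lambda\in\spec(M(X\square Y))$. The index sets $\{(\mu,\theta):\mu+\theta=\lambda\}$ for distinct $\lambda$ partition $\spec(M(X))\times\spec(M(Y))$. This holds because every sum $\mu+\theta$ is an eigenvalue of the product: its eigenvector $\bv\otimes\bw$ is nonzero. It also uses the fact, implicit in (\ref{eq:cartE}), that the eigenvalues of $M(X\square Y)$ are exactly these sums. Hence
\[
B(X\square Y)_{(u,x),(v,y)}\le \sum_{\mu}\sum_{\theta}\big|(E_\mu)_{u,v}\big|\,\big|(F_\theta)_{x,y}\big| =B(X)_{u,v}\,B(Y)_{x,y}.
\]

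There is no real obstacle here. The only point needing care is the justification that (\ref{eq:cartE}) is a genuine spectral decomposition. To check it, note that the $E_\mu\otimes F_\theta$ are orthogonal idempotents summing to $I\otimes I$. Moreover, $M(X\square Y)(E_\mu\otimes F_\theta)=(\mu+\theta)(E_\mu\otimes F_\theta)$. Grouping these terms by the value of $\mu+\theta$ therefore gives the spectral idempotents, and every pair is used exactly once. I will also remark that the inequality can be strict precisely when several pairs with the same sum contribute entries of opposite signs, which is why only an upper bound is claimed.
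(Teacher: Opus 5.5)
Your proposal is correct and follows the paper's proof. Both expand each $E_\lambda$ via (\ref{eq:cartE}), apply the triangle inequality entrywise, and regroup the resulting sum over $\spec(M(X))\times\spec(M(Y))$ into $B(X)_{u,v}\,B(Y)_{x,y}$; your additional check that the grouped $E_\mu\otimes F_\theta$ really form the spectral decomposition of $M(X\square Y)$ is a justification the paper leaves implicit.
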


\begin{proof}
For each $\lambda\in\spec(M(X\square Y))$, (\ref{eq:cartE}) and the triangle inequality give us
\begin{center}
$B(X\square Y)_{(u,x),(v,y)}=\displaystyle\sum_{\lambda\in\spec(M(X\square Y))}|(E_\lambda)_{(u,x),(v,y)}|\leq \sum_{\mu\in\spec(M(X))}\sum_{\theta\in\spec(M(Y))}|(E_{\mu})_{u,v}|\cdot |(E_{\theta})_{x,y}|.$
\end{center}
Since we may write the rightmost expression as $B(X)_{u,v}\cdot B(Y)_{x,y}$, we get the desired conclusion.
\end{proof}

\begin{theorem}
\label{thm:peakstcartprod}
Suppose $X$ admits peak state transfer between $u$ and $v$ at time $\tau$.
\begin{enumerate}
\item If $Y$ admits peak state transfer between $x$ and $y$ at time $\tau$, then $X\square Y$ has peak state transfer between $(u,x)$ and $(v,y)$ at time $\tau$. In this case, $|U_{X\square Y}(\tau)_{(u,x),(v,y)}|=B(X)_{u,v}\cdot B(Y)_{x,y}.$
\item If $x$ is periodic in $Y$ at time $\tau$, then $X\square Y$ admits peak state transfer between $(u,x)$ and $(v,x)$ at time $\tau$. In this case, $|U_{X\square Y}(\tau)_{(u,x),(v,x)}|=B(X)_{u,v}.$
\end{enumerate}
\end{theorem}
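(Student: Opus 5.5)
The plan is to use the product formula (\ref{eq:cartprod}), $U_{X\square Y}(t)=U_X(t)\otimes U_Y(t)$, together with the upper bound of Proposition~\ref{lem:cartbound} and the general inequality (\ref{eq:bounding}). The guiding idea is that the modulus of an entry of $U_{X\square Y}(\tau)$ factors as a product. If that product already equals the upper bound $B(X)_{u,v}B(Y)_{x,y}$, then it is squeezed between $|U_{X\square Y}(\tau)_{(u,x),(v,y)}|$ and $B(X\square Y)_{(u,x),(v,y)}$, which forces equality throughout.

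For part 1, I will first record that $U_{X\square Y}(\tau)_{(u,x),(v,y)}=U_X(\tau)_{u,v}\,U_Y(\tau)_{x,y}$. The peak state transfer hypotheses then give $U_X(\tau)_{u,v}=\gamma_1B(X)_{u,v}$ and $U_Y(\tau)_{x,y}=\gamma_2B(Y)_{x,y}$ for unit complex numbers $\gamma_1,\gamma_2$. Hence the entry equals $\gamma_1\gamma_2B(X)_{u,v}B(Y)_{x,y}$.

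Next I chain the inequalities. By (\ref{eq:bounding}) applied to $X\square Y$ and then Proposition~\ref{lem:cartbound},
\[
B(X)_{u,v}B(Y)_{x,y}=|U_{X\square Y}(\tau)_{(u,x),(v,y)}|\le B(X\square Y)_{(u,x),(v,y)}\le B(X)_{u,v}B(Y)_{x,y}.
\]
Thus all of these quantities are equal. In particular $U_{X\square Y}(\tau)_{(u,x),(v,y)}=\gamma_1\gamma_2B(X\square Y)_{(u,x),(v,y)}$, which is exactly peak state transfer at time $\tau$ with the stated modulus.

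For part 2, I reduce to part 1 with $y=x$. The point to check is that periodicity of $x$ at time $\tau$ amounts to peak state transfer from $x$ to itself. Since $B(Y)_{x,x}=1$ (the diagonal of $B$ is always $1$) and $|U_Y(\tau)_{x,x}|=1$, we have $U_Y(\tau)_{x,x}=\gamma B(Y)_{x,x}$ for some unit $\gamma$, so (\ref{eq:peakst}) holds with $x=y$. Part 1 then yields peak state transfer between $(u,x)$ and $(v,x)$ with modulus $B(X)_{u,v}\cdot 1$.

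There is no real obstacle here. The only subtle point is ensuring that the upper bound of Proposition~\ref{lem:cartbound} is attained, which is precisely what the squeeze argument establishes. The argument is also consistent with the paper's definition, which requires the phase $\gamma$ to be unimodular but otherwise arbitrary, so the product phase $\gamma_1\gamma_2$ is allowed.
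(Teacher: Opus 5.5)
Your proof is correct and follows the paper's argument. You factor the entry via $U_{X\square Y}(\tau)=U_X(\tau)\otimes U_Y(\tau)$ and squeeze $|U_{X\square Y}(\tau)_{(u,x),(v,y)}|$ between (\ref{eq:bounding}) and Proposition~\ref{lem:cartbound}, and your treatment of part 2 as the case $y=x$ of part 1 (using $B(Y)_{x,x}=1$) makes explicit the step the paper dismisses as ``follows similarly.''
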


\begin{proof}
Suppose $X$ admits peak state transfer between $u$ and $v$ at time $\tau$. From (\ref{eq:cartprod}), we have 
\begin{center}
$|U_{X\square Y}(\tau)_{(u,x),(v,y)}|=|U_X(\tau)_{u,v}|\cdot |U_Y(\tau)_{x,y}|=B(X)_{u,v}\cdot |U_Y(\tau)_{x,y}|.$
\end{center}
So, if $Y$ admits peak state transfer between $x$ and $y$ at time $\tau$, the above equation and (\ref{eq:bounding}) yields 
\begin{center}
$B(X)_{u,v}\cdot B(Y)_{x,y}=|U_{X\square Y}(\tau)_{(u,x),(v,y)}|\leq B(X\square Y)_{(u,x),(v,y)}.$
\end{center}
Combining this with Proposition~\ref{lem:cartbound} yields $|U_{X\square Y}(\tau)_{(u,x),(v,y)}|=B(X)_{u,v}\cdot B(Y)_{x,y}=B(X\square Y)_{(u,x),(v,y)}.$ This proves 1, while 2 follows similarly.
\end{proof}

We note that Theorem~\ref{thm:peakstcartprod}(2) was first established by \'Arnad\'ottir and Guo \cite[Lemma 7.1]{arnadottir2026perfect}.

Let $X^{\square m}$ denote the Cartesian product of $m$ copies of $X$. If $X=Y$ in Theorem~\ref{thm:peakstcartprod}(1), then we obtain:

\begin{cor}
\label{cor:peakstcartprod}
If $X$ admits peak state transfer between $u$ and $v$ at time $\tau$, then $X^{\square m}$ has peak state transfer between $(u,\ldots,u)$ and $(v,\ldots,v)$ at time $\tau$ for all $m\geq 1$ and $|U_{X^{\square m}}(\tau)_{(u,\ldots,u),(v,\ldots,v)}|=\big(B(X)_{u,v}\big)^m.$
\end{cor}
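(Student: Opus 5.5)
The plan is to argue by induction on $m$, using Theorem~\ref{thm:peakstcartprod}(1) as the inductive step, with the identification $X^{\square m}=X^{\square (m-1)}\square X$. For $m=1$ there is nothing to prove: the hypothesis is exactly peak state transfer between $u$ and $v$ in $X$ at time $\tau$, with $|U_X(\tau)_{u,v}|=B(X)_{u,v}$.

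For the inductive step, suppose $X^{\square (m-1)}$ admits peak state transfer between $(u,\ldots,u)$ and $(v,\ldots,v)$ at time $\tau$. Suppose further that $|U_{X^{\square (m-1)}}(\tau)_{(u,\ldots,u),(v,\ldots,v)}|=B(X^{\square(m-1)})_{(u,\ldots,u),(v,\ldots,v)}=\big(B(X)_{u,v}\big)^{m-1}$. Apply Theorem~\ref{thm:peakstcartprod}(1) with $X^{\square(m-1)}$ in the role of $X$, the vertices $(u,\ldots,u),(v,\ldots,v)$, and $Y=X$ with $x=u$, $y=v$. Both factors have peak state transfer at the \emph{same} time $\tau$, which is what the theorem requires. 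The theorem then gives peak state transfer in $X^{\square m}$ between $(u,\ldots,u)$ and $(v,\ldots,v)$ at time $\tau$. It also gives
\[
|U_{X^{\square m}}(\tau)_{(u,\ldots,u),(v,\ldots,v)}|=B(X^{\square(m-1)})_{(u,\ldots,u),(v,\ldots,v)}\cdot B(X)_{u,v}=\big(B(X)_{u,v}\big)^{m}.
\]
Along the way this shows that $B(X^{\square m})_{(u,\ldots,u),(v,\ldots,v)}=(B(X)_{u,v})^m$, so the bound of Proposition~\ref{lem:cartbound} is attained here.

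The only point requiring care is bookkeeping. We must check that the vertex $((u,\ldots,u),u)$ of $X^{\square(m-1)}\square X$ corresponds to $(u,\ldots,u)$ in $X^{\square m}$ under the natural isomorphism. This holds because (\ref{eq:cartprod}) is compatible with associativity of the Kronecker product, giving $U_{X^{\square m}}(t)=U_X(t)^{\otimes m}$. Alternatively, one could skip induction and compute directly: $|U_{X^{\square m}}(\tau)_{(u,\ldots,u),(v,\ldots,v)}|=|U_X(\tau)_{u,v}|^m=B(X)_{u,v}^m$. Combined with the $m$-fold iterate of Proposition~\ref{lem:cartbound} and the bound (\ref{eq:bounding}), this squeezes the bounding matrix entry and yields the same conclusion.
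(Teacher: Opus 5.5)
Your proof is correct and is essentially the paper's argument. The paper obtains the corollary by applying Theorem~\ref{thm:peakstcartprod}(1) with $Y=X$, iterated via $X^{\square m}=X^{\square(m-1)}\square X$, which is exactly your induction; your alternative squeeze via $U_{X^{\square m}}(t)=U_X(t)^{\otimes m}$, the iterated Proposition~\ref{lem:cartbound}, and (\ref{eq:bounding}) is also valid.
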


Let $d\geq 1$ and $S$ be a set with $q\geq 2$ elements, The \textit{Hamming graph} $H(d,q)$ has vertex set equal to the set of $d$-tuples in $S$, and two vertices are adjacent if they differ in exactly one coordinate. Note that $H(d,q)=K_q^{\square d}$ and $H(d,2)$ is the hypercube. The next result follows from Corollary~\ref{cor:peakstcartprod} and Example~\ref{ex:knnn}.

\begin{cor}
For all $q,d\geq 2$, $H(d,q)$ has peak state transfer between $(u,\ldots,u)$ and $(v,\ldots,v)$ at time $\tau=\frac{\pi}{q}$. In this case, $|U_{H(d,q)}(\tau)_{(u,\ldots,u),(v,\ldots,v)}|=(\frac{2}{q})^d.$
\end{cor}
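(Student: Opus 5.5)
The plan is to combine Corollary~\ref{cor:peakstcartprod} with the complete-graph case of Example~\ref{ex:knnn}, using $H(d,q)=K_q^{\square d}$. First I fix two distinct vertices $u\neq v$ of $K_q$ and establish peak state transfer between them at time $\tau=\frac{\pi}{q}$ with $B(K_q)_{u,v}=\frac{2}{q}$. Since $K_q$ is regular, the walks relative to $A$ and $L$ have equal moduli entrywise, so one computation covers both. I would do it directly from the spectral decomposition relative to $A$: $E_{q-1}=\frac{1}{q}\mathbf{J}$ and $E_{-1}=I-\frac{1}{q}\mathbf{J}$. Hence $(E_{q-1})_{u,v}=\frac1q$ and $(E_{-1})_{u,v}=-\frac1q$, so $B(K_q)_{u,v}=\frac2q$, which is the equality case of Proposition~\ref{prop:Buv}.

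Next, I compute $U_{K_q}(t)_{u,v}=\frac1q\big(e^{it(q-1)}-e^{-it}\big)=\frac{e^{-it}}{q}(e^{itq}-1)$. At $t=\frac{\pi}{q}$ this equals $-\frac{2}{q}e^{-i\pi/q}$, a unit multiple of $B(K_q)_{u,v}$. This is exactly the definition (\ref{eq:peakst}) of peak state transfer. Alternatively, this is the $m=1$ instance of Example~\ref{ex:knnn} via Theorem~\ref{thm:peakstjoin1}, where $\Delta=q^2$ and $\tau=\frac{\pi}{\sqrt\Delta}=\frac{\pi}{q}$. I will nonetheless include the direct computation, because Example~\ref{ex:knnn} is stated for $m\geq 2$.

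Finally, I apply Corollary~\ref{cor:peakstcartprod} with $X=K_q$ and $m=d$. This gives peak state transfer in $K_q^{\square d}=H(d,q)$ between $(u,\ldots,u)$ and $(v,\ldots,v)$ at time $\frac{\pi}{q}$, with modulus $(B(K_q)_{u,v})^d=(\frac2q)^d$. There is no real obstacle. The only point needing care is that peak state transfer in the product is certified through Proposition~\ref{lem:cartbound}, which is already built into Corollary~\ref{cor:peakstcartprod}. The hypothesis $q\geq 2$ is needed so that distinct $u,v$ exist; for $q=2$ the bound equals $1$, which recovers PST on the hypercube at time $\frac{\pi}{2}$.
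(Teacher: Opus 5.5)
Your proposal is correct and follows the paper's own argument, which combines Corollary~\ref{cor:peakstcartprod} (with $X=K_q$ and $m=d$) with the complete-graph case of Example~\ref{ex:knnn}. Your direct spectral computation, $U_{K_q}(t)_{u,v}=\frac{e^{-it}}{q}(e^{itq}-1)$ and $B(K_q)_{u,v}=\frac{2}{q}$, is a sensible addition, because Example~\ref{ex:knnn} is nominally stated for $m\geq 2$ and mentions $K_k$ only in passing.
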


\begin{theorem}
\label{thm:pgstcartprod}
Suppose $X$ admits PGST between $u$ and $v$ relative to $\{\tau_m\}$.
\begin{enumerate}
\item There is PGST or PST between $x$ and $y$ in $Y$ relative to $\{\tau_m\}$ if and only if $X\square Y$ has PGST between $(u,x)$ and $(v,y)$ relative to $\{\tau_m\}$. 
\item A vertex $x$ is almost periodic or periodic in $Y$ relative to $\{\tau_m\}$ if and only if $X\square Y$ has PGST between $(u,x)$ and $(v,x)$ relative to $\{\tau_m\}$. 
\end{enumerate}
\end{theorem}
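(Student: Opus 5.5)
The whole argument rests on the factorization (\ref{eq:cartprod}), $U_{X\square Y}(t)=U_X(t)\otimes U_Y(t)$. Entrywise this gives
\begin{center}
$U_{X\square Y}(\tau_m)_{(u,x),(v,y)}=U_X(\tau_m)_{u,v}\,U_Y(\tau_m)_{x,y}$
\end{center}
for every $m$. By hypothesis, $U_X(\tau_m)_{u,v}\to\gamma$ with $|\gamma|=1$. I would interpret ``PGST or PST between $x$ and $y$ relative to $\{\tau_m\}$'' as $\lim_{m}U_Y(\tau_m)_{x,y}=\gamma'$ for some unit $\gamma'$. The PST case is the one where the sequence can be taken constant, or where the limit is attained exactly. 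Similarly, ``$x$ almost periodic or periodic relative to $\{\tau_m\}$'' means $\lim_m U_Y(\tau_m)_{x,x}=\gamma'$ with $|\gamma'|=1$. With these readings, statement 2 is just the special case $y=x$ of statement 1, so I would prove 1 and deduce 2.

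For the forward direction of 1, suppose $U_Y(\tau_m)_{x,y}\to\gamma'$. Then the product converges to $\gamma\gamma'$, which is a unit complex number, so PGST occurs between $(u,x)$ and $(v,y)$ relative to $\{\tau_m\}$. If the $X\square Y$ transfer is genuinely perfect, that still counts as the stated conclusion, or it can be absorbed as the degenerate case.

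The converse is the only place that needs care. Suppose $U_X(\tau_m)_{u,v}U_Y(\tau_m)_{x,y}\to\gamma''$ with $|\gamma''|=1$. Since $U_X(\tau_m)_{u,v}\to\gamma\neq 0$, the factors $U_X(\tau_m)_{u,v}$ are eventually nonzero, so we can divide:
\begin{center}
$U_Y(\tau_m)_{x,y}=\dfrac{U_{X\square Y}(\tau_m)_{(u,x),(v,y)}}{U_X(\tau_m)_{u,v}}\longrightarrow \gamma''\bar\gamma$.
\end{center}
The limit $\gamma''\bar\gamma$ has modulus $1$, so $x$ and $y$ have PGST in $Y$ relative to the same sequence $\{\tau_m\}$, or PST if the modulus equals $1$ at some finite $\tau_m$. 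If the convention requires it, one passes to a tail of the sequence. An alternative that avoids division uses unitarity: $|U_X|,|U_Y|\le 1$ and $|U_XU_Y|\to1$ together force $|U_Y(\tau_m)_{x,y}|\to1$. I expect this converse to be the main obstacle, but only a mild one: the key point is that the limit of the $X$-factor is nonzero, which makes the $Y$-factor's limit determined.

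Statement 2 follows by setting $y=x$. Then PGST between $(u,x)$ and $(v,x)$ relative to $\{\tau_m\}$ is equivalent to $\lim_m U_Y(\tau_m)_{x,x}$ being a unit complex number. By definition, that is almost periodicity of $x$ along $\{\tau_m\}$, which becomes genuine periodicity when $|U_Y(\tau_m)_{x,x}|=1$ is attained at the times $\tau_m$. One should also check that $(u,x)\neq(v,x)$; this holds because $u\neq v$, as PGST is between distinct vertices.
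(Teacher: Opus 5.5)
Your proof is correct and is essentially the paper's argument: both rest solely on $U_{X\square Y}(t)=U_X(t)\otimes U_Y(t)$. The paper takes moduli, $\lim_{m\rightarrow\infty}|U_{X\square Y}(\tau_m)_{(u,x),(v,y)}|=\lim_{m\rightarrow\infty}|U_Y(\tau_m)_{x,y}|$, which is your unitarity ``alternative,'' and then sets $x=y$ for part 2. Your division by the eventually nonzero factor $U_X(\tau_m)_{u,v}$ is slightly more careful. It shows that the entry $U_Y(\tau_m)_{x,y}$ itself converges to a unit complex number along the same sequence, which matches the paper's entrywise definition of PGST rather than only the modulus.
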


\begin{proof}
If $X$ has PGST between $u$ and $v$ in $X$ relative to $\{\tau_m\}$, then $\displaystyle\lim_{m\rightarrow\infty}$$|U_X(\tau_m)_{u,v}|=1$. Applying (\ref{eq:cartprod}), 
\begin{center}
$\displaystyle\lim_{m\rightarrow\infty}$$|U_{X\square Y}(\tau_m)_{(u,x),(v,y)}|=$ $\displaystyle\lim_{m\rightarrow\infty}$$|U_X(\tau_m)_{u,v}|\cdot $ $\displaystyle\lim_{m\rightarrow\infty}$$|U_Y(\tau_m)_{x,y}|=$$\displaystyle\lim_{m\rightarrow\infty}$$|U_Y(\tau_m)_{x,y}|$.
\end{center}
From this, 1 follows immediately, while 2 is straightforward from taking $x=y$.
\end{proof}

The following is immediate from Theorem~\ref{thm:pgstcartprod}.

\begin{ex}
Let $M\in\{A,L\}$ and suppose $X$ admits PGST between $u$ and $v$ relative to $\{\tau_m\}$.
\begin{enumerate}
\item Suppose $\{\tau_m\}\subseteq 2\pi\Z$. If $Y$ is an integral graph, then $X\square Y$ admits PGST between $(u,x)$ and $(v,y)$ relative to $\{\tau_m\}$ for all $x,y\in V(Y)$ relative to $\{\tau_m\}$. For example, if $M=A$, $P_n$ is a path with $n\in \{2^t,2p\}$, and $Y$ is an integral graph, then the preceding observation combined with Example~\ref{ex:path1} yields PGST in $P_n\square Y$ between $(u,x)$ and $(n+1-u,x)$ relative to $\{\tau_m\}$ for all $x,y\in V(Y)$.

\item Suppose $\{\tau_m\}\subseteq (2\Z+1)\frac{\pi}{2}$. If $Y$ admits PST between $u$ and $v$ at time $\frac{\pi}{2}$, then $X\square Y$ admits PGST between $(u,x)$ and $(v,x)$ relative to $\{\tau_m\}$. For example, if $X\in \{\operatorname{Cay}(\Z_n,S\cup\{\frac{n}{2}\}),\operatorname{Cay}(\Z_n,S\cup\{\frac{n}{4},\frac{3n}{4}\})\}$ as in Example~\ref{ex:Cn1}, and $Y=H(d,2)$, then our preceding observation yields PGST in $X\square Y$ between $(u,x)$ and $(u+\frac{n}{2},y)$ relative to $\{\tau_m\}$ for any pair of antipodal vertices $x$ and $y$ in $Y$.
\end{enumerate}
\end{ex}

\begin{theorem}
\label{thm:pgstpal}
Suppose that PST occurs between $u$ and $v$ in $X$ at time $\tau$ and between $x$ and $y$ in $Y$ at time $\tau'$ such that $\{1,\frac{\tau'}{\tau}\}$ is a linearly independent set over $\Q$. Then there exists a sequence $\{m_r\}$ of odd integers such that $\lim_{r\rightarrow\infty}|U_Y(m_r\tau)_{x,y}|=1$. Consequently, PGST occurs between $(u,x)$ and $(v,y)$ in $X\square Y$.
\end{theorem}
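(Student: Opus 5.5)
The plan is to use the standard structure of perfect state transfer and reduce everything to Theorem~\ref{thm:kron}. Since PST occurs between $u$ and $v$ in $X$ at time $\tau$, vertex $u$ is periodic at time $2\tau$. So for every integer $m$ the matrix $U_X(m\tau)$ is determined by the parity of $m$: $|U_X(m\tau)_{u,v}|=1$ for $m$ odd and $|U_X(m\tau)_{u,u}|=1$ for $m$ even. Indeed, $U_X(\tau)\be_u=\gamma\be_v$ and $U_X(\tau)\be_v=\gamma\be_u$ by symmetry of $U_X(\tau)$, so $U_X(m\tau)\be_u$ is a unit multiple of $\be_v$ whenever $m$ is odd.

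Likewise, PST between $x$ and $y$ in $Y$ at time $\tau'$ gives $|U_Y(t)_{x,y}|=1$ for every $t\in\tau'(2\Z+1)$. Moreover, $t\mapsto U_Y(t)$ is continuous, and in fact uniformly continuous, since $\|U_Y(t)-U_Y(s)\|\leq\|M\|\,|t-s|$. It therefore suffices to find odd integers $m_r$ and integers $p_r$ with
\[
|m_r\tau-(2p_r+1)\tau'|\to 0 .
\]
Then $|U_Y(m_r\tau)_{x,y}|\to |U_Y((2p_r+1)\tau')_{x,y}|=1$.

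Dividing by $2\tau$ and writing $m=2q+1$, we need $(2q+1)/2-(2p+1)\tfrac{\tau'}{2\tau}$ close to $0$. Set $\beta=\tau/\tau'$, which is irrational because $\{1,\tau'/\tau\}$ is linearly independent over $\Q$. The condition $|(2q+1)\beta-(2p+1)|<\epsilon'$ rearranges to
\[
\bigl|q(2\beta)-p-\alpha\bigr|<\epsilon', \qquad \alpha=\tfrac{1-\beta}{2}.
\]
Since $1$ and $2\beta$ are linearly independent over $\Q$, Theorem~\ref{thm:kron} with $r=1$ supplies integers $q,p$ for each $\epsilon'>0$. Letting $\epsilon'\to 0$ gives the sequence $\{m_r=2q_r+1\}$. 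We should also check that $m_r$ can be taken unbounded, so that the times are genuinely distinct. This holds because $(2q+1)\beta$ is never an odd integer, as $\beta$ is irrational; hence arbitrarily small errors force $|q|\to\infty$. The main, but mild, obstacle is this bookkeeping: setting up the Kronecker instance correctly with both parities odd.

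For the consequence, let $\tau_r=m_r\tau$. By (\ref{eq:cartprod}),
\[
|U_{X\square Y}(\tau_r)_{(u,x),(v,y)}|=|U_X(m_r\tau)_{u,v}|\cdot|U_Y(m_r\tau)_{x,y}|=|U_Y(m_r\tau)_{x,y}|\to 1 .
\]
We then pass to a subsequence along which the phase $U_{X\square Y}(\tau_r)_{(u,x),(v,y)}$ converges to a unit $\gamma$, using compactness of the unit circle. This yields PGST in the sense of (\ref{eq:pgst}). Alternatively, we can invoke Theorem~\ref{thm:pgstcartprod}(1) directly, since $X$ has PST, and hence PGST, along $\{m_r\tau\}$ and $Y$ has PGST between $x$ and $y$ along the same sequence.
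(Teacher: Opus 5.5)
Your proof follows the paper's. Both apply Theorem~\ref{thm:kron} to the ratio of the two PST times with a half-shift so that both multiples are odd. The paper uses $\tau'/\tau$ where you use $\beta=\tau/\tau'$. Both then transfer $|U_Y|=1$ from odd multiples of $\tau'$ to nearby odd multiples of $\tau$ and finish with (\ref{eq:cartprod}). Your Lipschitz bound $\|U_Y(t)-U_Y(s)\|\leq\|M\|\,|t-s|$ and your subsequence step for the phase supply details the paper skips: plain continuity does not suffice when the times go to infinity.

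There is one algebra slip. Since $(2q+1)\beta-(2p+1)=2\bigl(q\beta-p-\alpha\bigr)$, the correct Kronecker instance uses $\lambda_1=\beta$ with tolerance $\epsilon'/2$. Your instance with $2\beta$ actually gives $|(4q+1)\beta-(2p+1)|<2\epsilon'$, so $m_r=4q_r+1$. That is still odd, so the conclusion is unaffected.
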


\begin{proof}
Let $\{1,\frac{\tau'}{\tau}\}$ be a linearly independent set over $\Q$. By Theorem~\ref{thm:kron}, for each $\epsilon>0$, there are integers $q,p$ such that $|q(\frac{\tau'}{\tau})-p-\frac{1}{2}(1-\frac{\tau'}{\tau})|<\frac{\epsilon}{2\tau}\pmod{2\pi}$. Equivalently, $|(2q+1) \tau'-(2p+1)\tau|<\epsilon \pmod{2\pi}$.
Thus, for each $\epsilon>0$, there exist sequences of odd integers $\{2q_r+1\}$ and $\{2p_r+1\}$ satisfying the property $\lim_{r\rightarrow\infty}|(2q_r+1) \tau'-(2p_r+1)\tau|=0$. Now, since PST occurs between $x$ and $y$ in $Y$ at every odd multiple of $\tau'$, we get
$|U_Y((2q_r+1)\tau')_{x,y}|=1$ for each $r$, and because $|U_Y(t)_{x,y}|$ is continuous, we get $\lim_{r\rightarrow\infty}|U_Y((2p_r+1)\tau)_{x,y}|=1$. Combining this with (\ref{eq:cartprod}) yields the desired conclusion.
\end{proof}

Theorem~\ref{thm:pgstpal} complements \cite[Theorem 4.2]{PAL2017746}, which states that if PST occurs between $u$ and $v$ in $X$ at time $\tau$ and $x$ is periodic in $Y$ at time $\tau'$ such that $\{1,\frac{\tau'}{\tau}\}$ is a linearly independent set over $\Q$, then PGST occurs between $(u,x)$ and $(v,x)$ in $X\square Y$. %One recovers this result by tweaking the proof of Theorem~\ref{thm:pgstpal}.

\begin{ex}
For all integers $m\geq 1$, $P_3^{\square m}$ admits PST at time $\frac{\pi}{\sqrt{2}}$. If $X$ is a regular graph admitting PST at time $\tau'$, then $\tau'=\frac{\pi}{g}$ for some integer $g\geq 1$. Applying Theorem~\ref{thm:pgstpal}, we obtain PGST in 
$(P_3^{\square m})\square X$. In particular, if $X=H(d,2)$, then $\tau'=\frac{\pi}{2}$, and so
$(P_3^{\square m})\square H(d,2)$ has PGST for all integers $m,d\geq 1$.
\end{ex}

\section{Graph complements}
\label{sec:comp}

The \textit{complement} of a graph $X$, denoted $X^c$, is the graph with the same vertex set as $X$ such that 
\begin{equation}
\label{eq:evalcomp}
A(X)+A(X^c)=\mathbf{J}-I\quad \text{and}\quad L(X)+L(X^c)=nI-\mathbf{J}.
\end{equation}
Note that $X$ is $k$-regular if and only if $X^c$ is $(n-k-1)$-regular. Since at least one of $X$ and $X^c$ is connected, we say that $X$ is \textit{coconnected} if $X$ and $X^c$ are both connected. Observe that if $X$ is connected but not coconnected, then $X$ is a join graph. 

Our goal in this section is to provide necessary conditions such that peak state transfer and PGST are preserved under graph complementation.
 
%The following basic result will be useful later.

\begin{prop}
\label{prop:boundmat}
Let $X$ be a coconnected graph. If (i) $M=A$ and $X$ is regular, or (ii) $M=L$, then the bounding matrices of $X$ and $X^c$ are equal.
\end{prop}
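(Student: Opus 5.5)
The plan is to compare the spectral decompositions of $M(X)$ and $M(X^c)$ directly, using (\ref{eq:evalcomp}). The key observation is that in both cases (i) and (ii), $\mathbf{1}$ is an eigenvector of $M(X)$. Since $X$ is connected, the associated spectral idempotent is exactly $\frac{1}{n}\mathbf{J}$: for $\theta=k$ when $M=A$ and $X$ is $k$-regular, and for $\theta=0$ when $M=L$. Because $M(X)$ is symmetric, every other spectral idempotent $E_\lambda$ with $\lambda\neq\theta$ satisfies $E_\lambda\mathbf{J}=\mathbf{J}E_\lambda=O$.

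First I treat the case $M=L$. From (\ref{eq:evalcomp}) we have $L(X^c)=nI-\mathbf{J}-L(X)$. Write $L(X)=0\cdot\frac{1}{n}\mathbf{J}+\sum_{\lambda\neq 0}\lambda E_\lambda$, and use $I=\frac{1}{n}\mathbf{J}+\sum_{\lambda\neq0}E_\lambda$. Substituting gives
\begin{equation*}
L(X^c)=0\cdot\tfrac{1}{n}\mathbf{J}+\sum_{\lambda\neq 0}(n-\lambda)E_\lambda .
\end{equation*}
The idempotents here are pairwise orthogonal and sum to $I$, so by uniqueness this is the spectral decomposition of $L(X^c)$, provided the coefficients $n-\lambda$ (for $\lambda\neq 0$) are distinct from each other and from $0$. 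Distinctness among themselves is clear. Coconnectedness gives $n-\lambda\neq 0$: if $n$ were an eigenvalue of $L(X)$, then $0$ would be a repeated eigenvalue of $L(X^c)$, making $X^c$ disconnected.

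Second, the case $M=A$ with $X$ $k$-regular. Here $A(X^c)=\mathbf{J}-I-A(X)$, and the same substitution yields
\begin{equation*}
A(X^c)=(n-1-k)\tfrac{1}{n}\mathbf{J}+\sum_{\lambda\neq k}(-1-\lambda)E_\lambda .
\end{equation*}
The coefficients $-1-\lambda$ are mutually distinct. I also need $-1-\lambda\neq n-1-k$, i.e. $\lambda\neq k-n$ for every $\lambda\neq k$. If equality held, the eigenvalue $n-1-k$ would have multiplicity greater than one in the $(n-1-k)$-regular graph $X^c$, which by Perron--Frobenius contradicts the connectedness of $X^c$. So again we obtain the spectral decomposition of $X^c$, with the same set of idempotents.

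In both cases the spectral idempotents of $M(X^c)$ coincide with those of $M(X)$; only the eigenvalue labels change. Since the bounding matrix depends only on the set of idempotents, $B(X^c)=B(X)$.

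The main obstacle is the step ensuring that no two eigenspaces merge under complementation, which would combine two idempotents into one and could change $\sum|(E_\lambda)_{u,v}|$ through cancellation. This is exactly where the coconnectedness hypothesis is used, via simplicity of the eigenvalue $0$ for the Laplacian and of the Perron eigenvalue for the adjacency matrix of the connected graph $X^c$.
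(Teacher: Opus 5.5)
Your proof is correct and takes the same route as the paper, whose entire argument is the one-line assertion that, under (i) or (ii), coconnectedness makes $X$ and $X^c$ share the same spectral idempotents. You supply the details that assertion leaves out. You rewrite $M(X^c)$ explicitly in terms of the idempotents of $M(X)$. You then check that no two eigenspaces merge, using that $0$ is a simple eigenvalue of $L(X^c)$ and that the Perron eigenvalue $n-1-k$ of $A(X^c)$ is simple.
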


\begin{proof}
Since $X$ is coconnected, our assumptions in (i) and (ii) imply that $X$ and $X^c$ have the same set of spectral idempotents. From this, the result is immediate.
\end{proof}

\begin{prop}
\label{prop:XXbar}
Let $X$ be a graph on $n$ vertices. For all $t\in\R$, we have  
\begin{equation*}
U_{X^c}(t)
=
  \begin{cases}
    e^{-it}U_{X}(-t)+\left(\frac{e^{-it(k+1)}(e^{itn}-1)}{n}\right)\mathbf{J} & \text{if $M=A$ and $X$ is regular},\\
    e^{itn}U_{X}(-t)+\left(\frac{1-e^{itn}}{n}\right)\mathbf{J} & \text{if $M=L$}.
  \end{cases}
\end{equation*}
\end{prop}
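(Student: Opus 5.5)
The plan is to use the complement relations in (\ref{eq:evalcomp}) together with the fact that $\mathbf{J}$ commutes with $M(X)$ in both cases. In case (i), if $X$ is $k$-regular, then $A\mathbf{J}=\mathbf{J}A=k\mathbf{J}$. In case (ii), $L\mathbf{J}=\mathbf{J}L=O$ holds for every graph. Because of this commutativity, the exponential of a sum splits as a product of exponentials. Neither connectivity nor coconnectivity is needed.

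\textbf{Case $M=L$.} Write $L(X^c)=nI-\mathbf{J}-L(X)$. The three summands pairwise commute, so
\begin{center}
$U_{X^c}(t)=e^{itn}\,e^{-itL(X)}\,e^{-it\mathbf{J}}=e^{itn}U_X(-t)e^{-it\mathbf{J}}$.
\end{center}
Since $\mathbf{J}^2=n\mathbf{J}$, we have $e^{-it\mathbf{J}}=I+\frac{e^{-itn}-1}{n}\mathbf{J}$. It remains to compute $U_X(-t)\mathbf{J}$. The relation $L\mathbf{J}=O$ gives $U_X(-t)\mathbf{J}=\mathbf{J}$. Expanding then yields
\begin{center}
$e^{itn}U_X(-t)+e^{itn}\frac{e^{-itn}-1}{n}\mathbf{J}=e^{itn}U_X(-t)+\frac{1-e^{itn}}{n}\mathbf{J}$,
\end{center}
which is the claimed formula.

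\textbf{Case $M=A$ with $X$ $k$-regular.} Write $A(X^c)=\mathbf{J}-I-A(X)$, again a sum of commuting terms. This gives
\begin{center}
$U_{X^c}(t)=e^{-it}\,U_X(-t)\,e^{it\mathbf{J}}$, where $e^{it\mathbf{J}}=I+\frac{e^{itn}-1}{n}\mathbf{J}$.
\end{center}
Using $A\mathbf{J}=k\mathbf{J}$, we get $U_X(-t)\mathbf{J}=e^{-itk}\mathbf{J}$. Substituting produces
\begin{center}
$e^{-it}U_X(-t)+\frac{e^{-it(k+1)}(e^{itn}-1)}{n}\mathbf{J}$,
\end{center}
as stated.

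An alternative route goes through spectral idempotents, which the paper uses elsewhere. Since $\mathbf{J}$ commutes with $M$, it preserves the eigenspaces of $M$. One could then track how the eigenvalue belonging to $\frac{1}{n}\mathbf{J}$ shifts under complementation. The commuting-exponentials argument above avoids this and also avoids any connectivity issues.

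The only point needing care is the bookkeeping of the scalar factors multiplying $\mathbf{J}$, in particular the signs in the exponents. Beyond that there is no real obstacle.
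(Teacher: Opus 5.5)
Your proof is correct and follows the paper's route. Both arguments split $e^{itM(X^c)}$ into commuting exponentials, use $e^{\pm it\mathbf{J}}=I+\frac{e^{\pm itn}-1}{n}\mathbf{J}$, and reduce $U_X(-t)\mathbf{J}$ to a scalar multiple of $\mathbf{J}$. You justify $U_X(-t)\mathbf{J}=\mathbf{J}$ via $L\mathbf{J}=O$, whereas the paper appeals to $\frac{1}{n}\mathbf{J}$ being a spectral idempotent, which tacitly needs connectivity, so your route is slightly cleaner; you also write out the adjacency case that the paper only says ``follows similarly.''
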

\begin{proof}
Since $L(X^c)=nI-\mathbf{J}-L(X)$ and the matrices $nI$, $\mathbf{J}$, and $L(X)$ pairwise commute, we obtain $U_{X^c}(t)=e^{itn}e^{-it\mathbf{J}}e^{-itL(X)}=e^{itn}e^{-it\mathbf{J}}U_{X}(-t).$ Combining this with $e^{-it\mathbf{J}}=I+\left(\frac{e^{-itn}-1}{n}\right)\mathbf{J}$ gives us
\begin{center}
$U_{X^c}(t)=\left(e^{itn}I+\left(\frac{1-e^{itn}}{n}\right)\mathbf{J}\right)U_{X}(-t).$
\end{center}
Finally, since $\frac{1}{n}\mathbf{J}$ is a spectral idempotent for $L$, we get $\mathbf{J}U_{X}(-t)=U_{X}(-t)\mathbf{J}=\mathbf{J}$. Hence, the conclusion for $M=L$ is immediate from the above equation. The case $M=A$ and $X$ is regular follows similarly.
\end{proof}

The following is straightforward from Proposition~\ref{prop:XXbar}.

\begin{cor}
\label{cor:comppgstpeak}
Let $u$ and $v$ be two vertices in $X$ such that $B(X)_{u,v}=B(X^c)_{u,v}$.
If (i) $M=A$ and $X$ is regular or (ii) $M=L$, then the following hold.
\begin{enumerate}
\item If $X$ admits PGST between $u$ and $v$ relative to a sequence $\{\tau_m\}$, then $X^c$ admits PGST between $u$ and $v$ relative to the same sequence if and only if $\lim_{m\rightarrow\infty}n\tau_m\equiv 0\pmod{2\pi}$. 
\item If $X$ admits peak state transfer between $u$ and $v$ at time $\tau$, then $X^c$ also admits peak state transfer between $u$ and $v$ at the same time if and only if $n\tau\equiv 0\pmod{2\pi}$, in which case $U_{X^c}(t)_{u,v}=e^{-it}\overline{U_{X}(t)_{u,v}}$ and $|U_{X^c}(t)_{u,v}|=|U_{X}(t)_{u,v}|=B(X)_{u,v}$. In particular, if $X$ admits PST between $u$ and $v$ at time $\tau$ and $n\tau\equiv 0\pmod{2\pi}$, then $X^c$ admits PST between $u$ and $v$ at time $\tau$.
\end{enumerate}
\end{cor}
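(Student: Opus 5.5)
Everything will come from taking the $(u,v)$ entry of Proposition~\ref{prop:XXbar} with $u\neq v$. For $M=L$ this gives
\[
U_{X^c}(t)_{u,v}=e^{itn}U_X(-t)_{u,v}+\frac{1-e^{itn}}{n}.
\]
For $M=A$ with $X$ $k$-regular it gives
\[
U_{X^c}(t)_{u,v}=e^{-it}U_X(-t)_{u,v}+\frac{e^{-it(k+1)}(e^{itn}-1)}{n}.
\]
The symmetry $U_X(-t)=\overline{U_X(t)}$ holds because each $E_\lambda$ is real. It gives $|U_X(-t)_{u,v}|=|U_X(t)_{u,v}|$ and lets us rewrite $U_X(-t)_{u,v}=\overline{U_X(t)_{u,v}}$. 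Since $B(X)_{u,v}=B(X^c)_{u,v}$ by hypothesis, the two walks are compared against the same bound.

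For part 2, suppose $n\tau\equiv 0\pmod{2\pi}$. Then the $\mathbf J$-correction vanishes, and the prefactor becomes $e^{-i\tau}$ when $M=A$, or $1$ when $M=L$ (where $e^{i\tau n}=1$). Hence $|U_{X^c}(\tau)_{u,v}|=|U_X(\tau)_{u,v}|=B(X)_{u,v}=B(X^c)_{u,v}$, which is peak state transfer in $X^c$. For the converse, write $U_X(-\tau)_{u,v}=\bar\gamma B$ with $|\gamma|=1$, where $\gamma=e^{i\tau k}$ for $M=A$ and $\gamma=1$ for $M=L$, as noted after (\ref{eq:pgst}).

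\emph{Case $M=L$.} Here $U_{X^c}(\tau)_{u,v}=e^{i\tau n}B+\frac{1-e^{i\tau n}}{n}$. If $e^{i\tau n}\neq 1$, I need $|e^{i\tau n}(B-\frac1n)+\frac1n|<B$. Proposition~\ref{prop:Buv} gives $B\ge \frac2n$, so $B-\frac1n\geq\frac1n>0$. The triangle inequality is then strict unless the two unit directions $e^{i\tau n}$ and $1$ coincide.

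\emph{Case $M=A$.} Here the term is $e^{-i\tau}\big(e^{-i\tau k}B+\frac{e^{-i\tau k}(e^{i\tau n}-1)}{n}\big)$, whose modulus is $|B-\frac1n+\frac{e^{i\tau n}}{n}|$. This is again strictly less than $B$ unless $e^{i\tau n}=1$, using $B>\frac1n$.

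The main obstacle is the equality case of the triangle inequality, which needs $B(X)_{u,v}>\frac1n$. For a connected $X$ this comes from Proposition~\ref{prop:Buv}. If $X$ itself is disconnected, I would instead apply the same argument with the roles of $X$ and $X^c$ swapped, since the complement of a disconnected graph is connected.

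For the displayed formula in part 2, set $e^{itn}=1$ in the $A$ case. The conjugation identity gives $U_{X^c}(\tau)_{u,v}=e^{-i\tau}\overline{U_X(\tau)_{u,v}}$. The statement about PST is the special case $B=1$.

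For part 1, PGST has $|U_X(\tau_m)_{u,v}|\to 1$. Passing to a subsequence, let $e^{i\tau_m n}\to\omega$, and take limits in the same identity. This gives $|\lim U_{X^c}(\tau_m)_{u,v}|=|1-\frac1n+\frac{\omega}{n}|$ after factoring out unit phases, using that the phase of $U_X(-\tau_m)_{u,v}$ tends to that of $\overline{\gamma_m}$ with $\gamma_m=e^{i\tau_m k}$ or $1$. This modulus equals $1$ if and only if $\omega=1$, for $n\ge 2$. That is exactly the condition $n\tau_m\to 0\pmod{2\pi}$.
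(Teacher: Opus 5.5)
Your proposal is correct and follows the paper's route: the paper only says the corollary is straightforward from Proposition~\ref{prop:XXbar}, and you carry out exactly that entrywise computation, while also supplying the strictness ingredient the paper leaves implicit, namely $B(X)_{u,v}\geq \frac{2}{n}>\frac{1}{n}$ (Proposition~\ref{prop:Buv} applied to whichever of $X$, $X^c$ is connected, combined with the hypothesis $B(X)_{u,v}=B(X^c)_{u,v}$). One small remark: as your own forward computation shows, for $M=L$ the identity at time $\tau$ is $U_{X^c}(\tau)_{u,v}=\overline{U_X(\tau)_{u,v}}$ (the prefactor is $e^{i\tau\delta}=e^{i\tau n}=1$), so the displayed formula with $e^{-it}$ in the statement is literally correct only for $M=A$, and only at $t=\tau$.
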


If $X$ is coconnected, then Proposition~\ref{prop:boundmat} implies that $B(X)_{u,v}=B(X^c)_{u,v}$ for all vertices $u,v$ in $X$. Thus, Corollary~\ref{cor:comppgstpeak} applies to coconnected graphs as long as $M=A$ and $X$ is regular, or $M=L$.

\begin{ex}
\label{ex:2expgst}
Let $X$ be a regular coconnected graph that admits PGST between $u$ and $v$ relative to a sequence $\{\tau_m\}\subseteq 2\pi\Z$. By Corollary~\ref{cor:comppgstpeak}(1), $X^c$ also has PGST between $u$ and $v$ relative to $\{\tau_m\}$. 
\end{ex}

\begin{ex}
\label{ex:2ex}
The following statements hold by Corollary~\ref{cor:comppgstpeak}(2).
\begin{itemize}
\item Let $n\equiv 0\pmod{4}$ so that $X=K_2\square K_n$ is a coconnected regular graph with PST at time $\frac{\pi}{2}$. Then $X^c$ is the direct product of $K_2$ and $K_n$, which has PST at $\frac{\pi}{2}$ between the same pairs of vertices.
\item Let $X$ be the Petersen graph, which is a coconnected 3-regular graph on 10 vertices and admits peak state transfer at time $\pi$. Then $X^c$ is the Johnson graph $J(5,2)$ which has peak state transfer at $\pi$.
\end{itemize}
\end{ex}
%love the pun
We complement Example~\ref{ex:knnn} by showing that any pair of nonadjacent vertices in a balanced complete multipartite graph with $k$ parts admits peak state transfer when $k$ is even. 

\begin{ex}
\label{ex:knntw}
Let $m\geq 2$ and $X$ be the balanced complete multipartite graph $K_{m,\ldots,m}$ with $k\geq 2$ parts, where $k$ is even. Then $X=(\bigcup_k K_m)^c$. Let $u$ and $v$ be nonadjacent vertices in $X$. In $X^c$, $u$ and $v$ belong to a connected component isomorphic to $K_m$, and so they admit peak state transfer at time $\tau=\frac{\pi}{m}$ by Example~\ref{ex:knnn}. Since $n=mk$ and $n\tau\equiv 0\pmod{2\pi}$ because $k$ is even, Corollary~\ref{cor:comppgstpeak}(2) implies that $X=(X^c)^c$ admits peak state transfer between any two nonadjacent vertices at time $\tau$.
\end{ex}

\section{Comparing $|U_X(t)_{u,v}|$ and $|U_{X^c}(t)_{u,v}|$}
\label{sec:compare}

Let $M\in\{A,L\}$. Set $\delta=-1$ whenever $M=A$ and $\delta=n$ otherwise. Given vertices $u$ and $v$ in a graph $X$, we define two nonnegative functions $f_{u,v}:\R\rightarrow\R$ and $g_{u,v}:\R\rightarrow\R$ as follows:
\begin{equation*}
f_{u,v}(t):=\big|U_{X^c}(t)_{u,v}-e^{it\delta}U_X(-t)_{u,v}\big|\quad \text{and} \quad g_{u,v}(t):=\big||U_{X^c}(t)_{u,v}|-|U_X(t)_{u,v}|\big|. 
\end{equation*}

The main goal of this section is to provide tight bounds for $f_{u,v}(t)$ and $g_{u,v}(t)$, which in turn relate $U_X(t)$ and $U_{X^c}(t)$ entrywise. These two expressions quantify the difference between the behavior of quantum state transfer between two vertices in a graph and its complement. 

First, we derive crude bounds for $g_{u,v}(t)$ and $f_{u,v}(t)$ using the bounding matrices of $X$ and $X^c$.

\begin{prop}
\label{prop:regupperbound2}
For all $u,v\in V(X)$ and for all $t\in\R$, we have 
\begin{center}
$g_{u,v}(t)\leq b\leq 1\quad $ and $\quad f_{u,v}(t)\leq B(X)_{u,v}+B(X^c)_{u,v}
\leq 2$,
\end{center}
where $b=\max\{B(X)_{u,v}, B(X^c)_{u,v}\}$. In particular, if $b=B(X)_{u,v}$, then $g_{u,v}(\tau)=b$ if and only if $X$ admits peak state transfer between $u$ and $v$ at time $\tau$ and $U_{X^c}(\tau)_{u,v}=0$.
\end{prop}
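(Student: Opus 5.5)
The plan is to use only the entrywise bound (\ref{eq:bounding}), $|U_Y(t)_{u,v}|\le B(Y)_{u,v}$, applied to both $Y=X$ and $Y=X^c$. Two further facts will be needed: $|U(-t)_{u,v}|=|U(t)_{u,v}|$, which holds because $U_X(-t)=\overline{U_X(t)}$ (the idempotents are real), and $|e^{it\delta}|=1$.

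For $g_{u,v}$, write $a=|U_X(t)_{u,v}|\in[0,B(X)_{u,v}]$ and $c=|U_{X^c}(t)_{u,v}|\in[0,B(X^c)_{u,v}]$. Since both are nonnegative,
\[
|a-c|\le\max\{a,c\}\le b .
\]
The inequality $b\le 1$ needs a little more care, because entries of $B$ can a priori exceed $1$. Instead I will argue directly from unitarity: $|U(t)_{u,v}|\le 1$, so both $a,c\in[0,1]$ and hence $g_{u,v}(t)\le 1$. To get $B(X)_{u,v}\le 1$ itself, I use that each $E_\lambda$ is positive semidefinite. Then $|(E_\lambda)_{u,v}|\le\sqrt{(E_\lambda)_{u,u}(E_\lambda)_{v,v}}\le\frac12\big((E_\lambda)_{u,u}+(E_\lambda)_{v,v}\big)$. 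Summing over $\lambda$ and using $\sum_\lambda E_\lambda=I$ gives $B(X)_{u,v}\le 1$, and the same argument applies to $X^c$. This settles $g_{u,v}(t)\le b\le 1$.

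For $f_{u,v}$, the triangle inequality gives
\[
f_{u,v}(t)\le |U_{X^c}(t)_{u,v}|+|U_X(-t)_{u,v}|\le B(X^c)_{u,v}+B(X)_{u,v},
\]
and this is at most $2$ by the previous paragraph.

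For the equality case, assume $b=B(X)_{u,v}$. Suppose first that $g_{u,v}(\tau)=b$. We have $|a-c|\le\max\{a,c\}\le b$, with $a\le B(X)_{u,v}=b$ and $c\le B(X^c)_{u,v}\le b$. Equality $|a-c|=b$ forces $\min\{a,c\}=0$ and $\max\{a,c\}=b$. If $a=b$ and $c=0$, this is exactly peak state transfer in $X$ at $\tau$ together with $U_{X^c}(\tau)_{u,v}=0$. The other branch, $c=b$ and $a=0$, is the obstacle. It needs either $B(X^c)_{u,v}=B(X)_{u,v}$ plus a symmetric statement, or some way to exclude it. I expect to exclude it via Proposition~\ref{prop:B+}: if $X$ is connected then $B(X)_{u,v}>0$. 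Even so, $a=0$ with $c=b$ is not obviously impossible, so I would read the ``if and only if'' as the branch $a=b$, $c=0$. If needed, I would note that when $B(X^c)_{u,v}<b$ the condition $c=b$ cannot hold, so only the first branch survives. In the tie case $B(X^c)_{u,v}=b$, I would state the symmetric alternative explicitly. Conversely, if peak state transfer occurs in $X$ at $\tau$ and $U_{X^c}(\tau)_{u,v}=0$, then $g_{u,v}(\tau)=|0-B(X)_{u,v}|=b$ immediately.
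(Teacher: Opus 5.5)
Your inequalities follow the paper's argument. The paper splits on whether $|U_{X^c}(t)_{u,v}|\ge|U_X(t)_{u,v}|$ and bounds the larger modulus by its bounding-matrix entry, which is your one-line $|a-c|\le\max\{a,c\}\le b$. It bounds $f_{u,v}$ using (\ref{eq:bounding}) and the triangle inequality, as you do. Your positive-semidefiniteness argument for $B(X)_{u,v}\le 1$ is correct: $|(E_\lambda)_{u,v}|\le\frac12\big((E_\lambda)_{u,u}+(E_\lambda)_{v,v}\big)$ summed against $\sum_\lambda E_\lambda=I$. It supplies the bounds $b\le 1$ and $B(X)_{u,v}+B(X^c)_{u,v}\le 2$, which the paper asserts without proof.

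The paper gives no argument for the equality clause. Your branch analysis is right, and you should commit to its conclusion rather than hedge. The branch $a=0$, $c=b$ cannot be excluded, so the ``only if'' direction is false whenever $B(X^c)_{u,v}=B(X)_{u,v}$. Proposition~\ref{prop:B+} cannot rescue it: positivity of $B(X)_{u,v}$ says nothing about $U_X(\tau)_{u,v}$ vanishing at one particular time.

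The paper's own Example~\ref{ex:K2r} gives a counterexample. Take $X=K_{2,8}$ (so $s=2$) with $u,v$ the two vertices of degree $8$. Then $B(X)_{u,v}=B(X^c)_{u,v}=1=b$. At $\tau=\pi/2$ we have $|U_X(\tau)_{u,v}|=\sin^2\pi=0$ and $|U_{X^c}(\tau)_{u,v}|=|\sin(\pi/2)|=1$. Hence $g_{u,v}(\tau)=b$, yet $X$ has no peak state transfer between $u$ and $v$ at $\tau$.

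The correct statement is exactly what your argument proves. If $B(X)_{u,v}>B(X^c)_{u,v}$, the equivalence holds as stated. If the two entries are equal, then $g_{u,v}(\tau)=b$ if and only if peak state transfer occurs between $u$ and $v$ at $\tau$ in one of $X$ or $X^c$ and the $(u,v)$ entry of the other transition matrix vanishes at $\tau$. State it this way instead of reinterpreting the ``if and only if''.
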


\begin{proof}
If $|U_{X^c}(t)_{u,v}|\geq |U_X(t)_{u,v}|$, then $g_{u,v}(t)=|U_{X^c}(t)_{u,v}|-|U_X(t)_{u,v}|\leq B(X^c)_{u,v}-|U_X(t)_{u,v}|\leq B(X^c)_{u,v}$. Otherwise, $g_{u,v}(t)\leq B(X)_{u,v}$. Thus, $g_{u,v}(t)\leq b$ for all $t\in\R$. The bounds for $f_{u,v}(t)$ follow from (\ref{eq:bounding}) and the triangle inequality.
\end{proof}

\begin{prop}
\label{prop:regupperbound1}
For all $u,v\in V(X)$, we have 
$g_{u,v}(t)\leq f_{u,v}(t)$ for all $t\in\R$.
\end{prop}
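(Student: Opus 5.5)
The plan is to prove the inequality directly from the reverse triangle inequality for complex moduli, together with the fact that reversing time and multiplying by a unimodular scalar does not change the modulus of an entry of the transition matrix. No spectral information is needed.

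First, I will show that $|e^{it\delta}U_X(-t)_{u,v}|=|U_X(t)_{u,v}|$ for all $t\in\R$. Since $|e^{it\delta}|=1$, it suffices to compare $|U_X(-t)_{u,v}|$ with $|U_X(t)_{u,v}|$. Because $M$ is real symmetric, we have
\begin{equation*}
U_X(-t)=e^{-itM}=\overline{e^{itM}}=\overline{U_X(t)},
\end{equation*}
which also follows from the spectral decomposition $U_X(t)=\sum_\lambda e^{it\lambda}E_\lambda$ with real idempotents $E_\lambda$. Hence $U_X(-t)_{u,v}=\overline{U_X(t)_{u,v}}$, and the two entries have the same modulus.

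Second, I will apply the reverse triangle inequality $\big||a|-|b|\big|\leq |a-b|$ with $a=U_{X^c}(t)_{u,v}$ and $b=e^{it\delta}U_X(-t)_{u,v}$. This gives
\begin{equation*}
g_{u,v}(t)=\big||U_{X^c}(t)_{u,v}|-|U_X(t)_{u,v}|\big|=\big||a|-|b|\big|\leq |a-b|=f_{u,v}(t).
\end{equation*}

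There is no real obstacle in this argument. The only point requiring care is the conjugation identity $U_X(-t)=\overline{U_X(t)}$, which relies on $M$ being real and holds for both $M=A$ and $M=L$. The argument also uses no regularity or connectivity assumptions, which is consistent with the statement holding for all graphs $X$.
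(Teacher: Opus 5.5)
Your proposal is correct and follows the same route as the paper: use $U_X(-t)=\overline{U_X(t)}$ (valid because $M$ is real) to get $|e^{it\delta}U_X(-t)_{u,v}|=|U_X(t)_{u,v}|$, then apply the reverse triangle inequality. You also make explicit the step $|e^{it\delta}|=1$, which the paper's proof uses only tacitly.
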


\begin{proof}
Since $U_X(-t)=\overline{U_X(t)}$, we get $U_X(-t)_{u,v}=\overline{U_X(t)_{u,v}}$, and so $|U_X(-t)_{u,v}|=|U_X(t)_{u,v}|$. By the triangle inequality, $g_{u,v}(t)=||U_{X^c}(t)_{u,v}|-|U_X(-t)_{u,v}||\leq |U_{X^c}(t)_{u,v}-e^{it\delta}U_X(-t)_{u,v}|=f_{u,v}(t).$
\end{proof}

Since the graphs in Example~\ref{ex:2ex} satisfy $n\tau\equiv 0\pmod{2\pi}$, Corollary~\ref{cor:comppgstpeak}(2) implies that $f_{u,v}(\tau)=g_{u,v}(\tau)=0$. Now, if we take $X=K_2\vee Y$ that admits PST between the vertices in $K_2$ at time $\tau$ relative to $A$ (see \cite[Theorem 13]{Kirkland2023}
for a characterization of PST in such graphs), then we have $U_{X^c}(\tau)_{u,v}=0$, and so $g_{u,v}(\tau)=1.$  We now give families where $f_{u,u}(\tau)=2$ and $\displaystyle\lim_{n\rightarrow\infty}f_{u,v}(\tau')=1$ for some $\tau,\tau'\in\R$.

\begin{ex}
\label{ex:k2n}
Let $M=A$. The following hold.
\begin{enumerate}
\item Let $n$ be an even perfect square and $u$ be the central vertex of $X=K_{1,r}$. Then $u$ is isolated in $X^c$, and so $U_{X^c}(t)_{u,u}=1$ for all $t$. Since $U_{X}(t)_{u,u}=\cos(\sqrt{r}t)$, $f_{u,u}\big(\pi\big)=
\big|1-e^{-i\pi}\cos(\sqrt{r}\pi)\big|=2.$
So, $f_{u,u}(\pi)=2$ for all even perfect square $n\geq 4$.
\item Let $u,v$ be vertices of degree $r$ in $X=K_{2,r}$. One checks that $U_X(\frac{\pi}{\sqrt{2r}})_{u,v}=-1$ and $U_{X^c}(\frac{\pi}{\sqrt{2}})_{u,v}=i\sin(\frac{\pi}{\sqrt{2r}})$. Hence, $f_{u,v}\big(\frac{\pi}{\sqrt{2r}}\big)=
\big|i\sin(\frac{\pi}{\sqrt{2r}})+e^{-i\frac{\pi}{\sqrt{2r}}}\big|=\big|\cos(\frac{\pi}{\sqrt{2r}})\big|.$
So, 
$\lim_{r\rightarrow\infty}f_{u,v}(\frac{\pi}{\sqrt{2r}})=1$.
\end{enumerate}
\end{ex}

%\subsection{Improved bounds for $g_{u,v}(t)$ and $f_{u,v}(t)$}

\begin{theorem}
\label{thm:regupperbound}
Suppose (i) $M=A$ and $X$ is regular or (ii) $M=L$. If $X$ has $n$ vertices, then for all $u,v\in V(X)$, $g_{u,v}(t)\leq f_{u,v}(t)=\frac{1}{n}|1-e^{itn}|\leq \frac{2}{n}$ for all $t\in\R$.
\end{theorem}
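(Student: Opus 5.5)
The plan is to read the statement straight off Proposition~\ref{prop:XXbar}. That proposition writes $U_{X^c}(t)$ as $e^{it\delta}U_X(-t)$ plus a multiple of $\mathbf{J}$. Taking the $(u,v)$ entry and moving the first term to the left-hand side gives $f_{u,v}(t)$ exactly as the modulus of that $\mathbf{J}$-coefficient, since every entry of $\mathbf{J}$ equals $1$. The inequality $g_{u,v}(t)\leq f_{u,v}(t)$ is then simply Proposition~\ref{prop:regupperbound1}.

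\textbf{Case $M=L$.} Here $\delta=n$. Proposition~\ref{prop:XXbar} gives
\[
U_{X^c}(t)_{u,v}-e^{itn}U_X(-t)_{u,v}=\frac{1-e^{itn}}{n},
\]
so $f_{u,v}(t)=\frac1n|1-e^{itn}|$ directly.

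\textbf{Case $M=A$, $X$ $k$-regular.} Here $\delta=-1$. Proposition~\ref{prop:XXbar} gives
\[
U_{X^c}(t)_{u,v}-e^{-it}U_X(-t)_{u,v}=\frac{e^{-it(k+1)}(e^{itn}-1)}{n}.
\]
Because $|e^{-it(k+1)}|=1$, the modulus of the right-hand side is $\frac1n|e^{itn}-1|=\frac1n|1-e^{itn}|$.

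Proposition~\ref{prop:XXbar} is an identity that does not assume connectedness; it only uses that $\mathbf{J}$ commutes with $M$ and that $\mathbf{J}U_X(-t)=\mathbf{J}$ times a scalar phase. So no coconnectedness hypothesis is needed for either case.

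Finally, $|1-e^{itn}|\leq 2$ by the triangle inequality, which gives $f_{u,v}(t)\leq\frac{2}{n}$. This bound is attained when $tn\equiv\pi\pmod{2\pi}$.

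There is no real obstacle. The only point to double-check is the regular-$A$ case of Proposition~\ref{prop:XXbar}, which the paper states only as ``follows similarly''. It comes from $U_{X^c}(t)=e^{-it}e^{it\mathbf{J}}U_X(-t)$, together with $e^{it\mathbf{J}}=I+\frac{e^{itn}-1}{n}\mathbf{J}$ and $\mathbf{J}U_X(-t)=e^{-itk}\mathbf{J}$. Combining these produces the coefficient $\frac{e^{-it(k+1)}(e^{itn}-1)}{n}$ on $\mathbf{J}$, which is what I would verify before relying on it.
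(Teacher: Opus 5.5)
Your proof is correct and matches the paper's argument: read off $f_{u,v}(t)=\frac{1}{n}|1-e^{itn}|$ entrywise from Proposition~\ref{prop:XXbar}, bound it by $\frac{2}{n}$, and invoke Proposition~\ref{prop:regupperbound1} for $g_{u,v}(t)\leq f_{u,v}(t)$. Your check of the regular-$A$ coefficient $\frac{e^{-it(k+1)}(e^{itn}-1)}{n}$ via $\mathbf{J}U_X(-t)=e^{-itk}\mathbf{J}$ is right. So is your remark that only $A\mathbf{1}=k\mathbf{1}$ (resp.\ $L\mathbf{1}=\mathbf{0}$) is needed rather than connectedness, which slightly tightens the paper's ``spectral idempotent'' justification.
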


\begin{proof}
By Proposition~\ref{prop:XXbar}, we get that $f_{u,v}(t)=|U_{X^c}(t)_{u,v}-e^{it\delta}U_X(-t)_{u,v}|=\frac{1}{n}|1-e^{itn}|\leq 2/n$. Combining this with Proposition~\ref{prop:regupperbound1} yields the desired conclusion.
\end{proof}

The case $M=A$ and $X$ is regular was first established in \cite[Lemma 2.1]{SedQW}. Also, by virtue of Proposition~\ref{prop:Buv}, 
the upper bound of $\frac{2}{n}$ in Theorem~\ref{thm:regupperbound} improves that in Proposition~\ref{prop:regupperbound2}.
Thus, if (i) $M=A$ and $X$ is regular or (ii) $M=L$, then there are no graphs that behave similarly as those in Example~\ref{ex:k2n}.

As a consequence of Theorem~\ref{thm:regupperbound}, the behavior of quantum walks on $X$ and $X^c$ tends to coincide as the number of vertices increases.

\begin{cor}
\label{eq:cornlarge}
Let $X$ be a graph on $n$ vertices. If (i) $M=A$ and $X$ is regular or (ii) $M=L$, then for all $u,v\in V(X)$ and for all $t\in\R$, we have $f_{u,v}(t)\rightarrow 0$ and $g_{u,v}(t)\rightarrow 0$ as $n\rightarrow\infty$.
\end{cor}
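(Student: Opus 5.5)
This is a direct consequence of Theorem~\ref{thm:regupperbound}, so the plan is simply to make the limit statement precise and read it off from the explicit bound proved there. Under hypothesis (i) or (ii), Theorem~\ref{thm:regupperbound} gives, for every graph $X$ on $n$ vertices, every pair $u,v\in V(X)$ and every $t\in\R$,
\[
0\leq g_{u,v}(t)\leq f_{u,v}(t)=\tfrac{1}{n}\,|1-e^{itn}|\leq \tfrac{2}{n}.
\]
Nonnegativity holds because both functions are defined as absolute values.

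The first step is to interpret the statement correctly. Here ``$n\rightarrow\infty$'' ranges over any sequence of graphs $X_n$ on $n$ vertices satisfying (i) or (ii), together with any choice of vertices $u_n,v_n\in V(X_n)$ and any times $t_n$. The point is that the bound $\frac{2}{n}$ does not depend on $X$, $u$, $v$ or $t$, so the convergence is uniform in all of these data.

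The second step is a squeeze argument. Since $0\leq g_{u,v}(t)\leq f_{u,v}(t)\leq \frac{2}{n}$ and $\frac{2}{n}\rightarrow 0$, both $f_{u,v}(t)$ and $g_{u,v}(t)$ tend to $0$, in fact uniformly in $t$, $u$, $v$ and the choice of $X$.

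There is no real obstacle here. The only subtlety is the interpretive one already addressed: the functions change with $X$ as $n$ grows, and the uniformity of the $\frac{2}{n}$ bound is exactly what makes the limit statement meaningful.
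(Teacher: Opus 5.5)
Your proposal is correct and matches the paper, which states this corollary as an immediate consequence of Theorem~\ref{thm:regupperbound}. The bound $0\leq g_{u,v}(t)\leq f_{u,v}(t)\leq \frac{2}{n}$ does not depend on $X$, $u$, $v$ or $t$, so both quantities tend to $0$; your remark on uniformity just makes the informal limit statement precise.
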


The result in Corollary~\ref{eq:cornlarge} does not necessarily hold whenever $M=A$ and $X$ is irregular. Indeed, the graph in Example~\ref{ex:k2n}(2) satisfies $\lim_{r\rightarrow\infty}f_{u,v}(\frac{\pi}{\sqrt{2r}})=\lim_{r\rightarrow\infty}g_{u,v}(\frac{\pi}{\sqrt{2r}})=1$. Theorem~\ref{thm:regupperbound} also gives us:

\begin{cor}
\label{eq:corfg}
Let $X$ be graph on $n$ vertices. If (i) $M=A$ and $X$ is regular or (ii) $M=L$, then:
\begin{enumerate}
\item If $\tau=2j\pi/n$ for any integer $j$, then $|U_{X^c}(\tau)_{u,v}|=|U_X(\tau)_{u,v}|$ for all $u,v\in V(X)$.
\item %If vertex $u$ is periodic in $X$ at time $\tau$, then $|U_{X^c}(\tau)_{u,u}|\geq 1-\frac{2}{n}$. 
If PGST occurs between $u$ and $v$ in $X$ relative to $\{\tau_m\}$, then $\liminf_{m\rightarrow\infty}|U_{X^c}(\tau_m)_{u,v}|\geq 1-\frac{2}{n}$. If peak state transfer occurs between $u$ and $v$ in $X$ at time $\tau$, then $|U_{X^c}(\tau)_{u,v}|\geq B(X)_{u,v}-\frac{2}{n}$.
\end{enumerate}
\end{cor}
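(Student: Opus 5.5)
Both parts will come from Theorem~\ref{thm:regupperbound}, which gives, under (i) or (ii), the exact identity $f_{u,v}(t)=\frac{1}{n}|1-e^{itn}|$ together with $g_{u,v}(t)\leq f_{u,v}(t)\leq \frac{2}{n}$. The plan is to evaluate this identity at the special times in part 1 and to feed the bound on $g_{u,v}$ into a limit argument for part 2.

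For part 1, take $\tau=2j\pi/n$. Then $e^{i\tau n}=e^{2\pi i j}=1$, so $f_{u,v}(\tau)=0$. Since $0\leq g_{u,v}(\tau)\leq f_{u,v}(\tau)$, we get $g_{u,v}(\tau)=0$, which is exactly $|U_{X^c}(\tau)_{u,v}|=|U_X(\tau)_{u,v}|$. This holds for every pair $u,v$.

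For part 2, the bound $g_{u,v}(t)\leq \frac{2}{n}$ gives, for every $t$,
\[
|U_{X^c}(t)_{u,v}|\geq |U_X(t)_{u,v}|-\tfrac{2}{n}.
\]
\begin{itemize}
\item \emph{Peak state transfer.} If it occurs at time $\tau$, then $|U_X(\tau)_{u,v}|=B(X)_{u,v}$ by (\ref{eq:peakst}). Plugging in $t=\tau$ gives $|U_{X^c}(\tau)_{u,v}|\geq B(X)_{u,v}-\frac{2}{n}$.
\item \emph{PGST.} If it occurs relative to $\{\tau_m\}$, then (\ref{eq:pgst}) gives $|U_X(\tau_m)_{u,v}|\to 1$. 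Taking $\liminf$ of the displayed inequality along $\tau_m$ yields $\liminf_{m\to\infty}|U_{X^c}(\tau_m)_{u,v}|\geq 1-\frac{2}{n}$.
\end{itemize}

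Both parts are direct consequences of Theorem~\ref{thm:regupperbound}, and none of the steps is difficult. The only point needing care is the direction of the inequality: $g_{u,v}$ is an absolute difference, so we use it as a lower bound on $|U_{X^c}|$ in terms of $|U_X|$.
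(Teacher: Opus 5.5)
Your proposal is correct and matches the paper's approach: the paper states this corollary as an immediate consequence of Theorem~\ref{thm:regupperbound}, as you do. You get part 1 from the exact formula $f_{u,v}(t)=\frac{1}{n}|1-e^{itn}|$ vanishing at $t=2j\pi/n$, and part 2 from the bound $g_{u,v}(t)\leq\frac{2}{n}$ applied at the peak time and along the PGST sequence.
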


From Corollary~\ref{eq:cornlarge}, $|U_{X^c}(t)_{u,v}|$ and $|U_X(t)_{u,v}|$ tend to coincide as $n\rightarrow\infty$. This is complemented by Corollary~\ref{eq:corfg}(1), which states that $|U_{X^c}(t)_{u,v}|$ and $|U_X(t)_{u,v}|$ coincide for countably infinite values of $t$. %Moreover, Corollary~\ref{eq:corfg}(2) says that if $n$ is sufficiently large, then accurate transfer in $X$ translates to high probability transfer in $X^c$ between the same pairs of vertices.

\begin{cor}
\label{cor:joinfuvupperbound}
Let $X$ be a join graph on $n$ vertices with $u,v\in V(X)$. 
\begin{enumerate}
\item If $u$ and $v$ are in different components in $X^c$, then $g_{u,v}(t)=f_{u,v}(t)=|U_X(t)_{u,v}|\leq \frac{2}{n}$ for all $t\in\R$.

\item If $M=L$ and $u$ is an isolated vertex in $X^c$, then $|U_X(t)_{u,u}|\geq 1-\frac{2}{n}$ for all $t\in\R$. In particular, $u$ is sedentary in $X$ whenever $n\geq 3$.
\end{enumerate}
\end{cor}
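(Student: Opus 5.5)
The plan is to reduce both parts to Theorem~\ref{thm:regupperbound}, after observing that the relevant entry of $U_{X^c}(t)$ is trivial when $X^c$ is disconnected. I will work under the standing hypotheses of Theorem~\ref{thm:regupperbound}: (i) $M=A$ and $X$ regular, or (ii) $M=L$. These are genuinely needed for the $\frac{2}{n}$ bound in part 1. For instance, for $M=A$ and the star $K_{1,r}$ (a join) with $u$ its centre and $v$ a leaf, $|U_X(t)_{u,v}|=|\sin(\sqrt r\,t)|/\sqrt r$. This reaches $1/\sqrt r$, which exceeds $2/(r+1)$ for $r\geq 2$.

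The key structural fact is that $X$ is a join, so $X^c$ is disconnected. Then $M(X^c)$ is permutation-similar to a block diagonal matrix with one block per component. Hence $U_{X^c}(t)=e^{itM(X^c)}$ is block diagonal in the same way for every $t$.

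For part 1, $u$ and $v$ lie in different components of $X^c$, so $U_{X^c}(t)_{u,v}=0$ for all $t$. Substituting into the definitions gives $f_{u,v}(t)=|e^{it\delta}U_X(-t)_{u,v}|=|U_X(-t)_{u,v}|$. Using $U_X(-t)=\overline{U_X(t)}$, as in the proof of Proposition~\ref{prop:regupperbound1}, this equals $|U_X(t)_{u,v}|$. Likewise $g_{u,v}(t)=|0-|U_X(t)_{u,v}||=|U_X(t)_{u,v}|$. Theorem~\ref{thm:regupperbound} then gives $f_{u,v}(t)\leq \frac{2}{n}$, which proves the claimed chain of equalities and the bound.

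For part 2, take $M=L$ with $u$ isolated in $X^c$. Row $u$ of $L(X^c)$ is zero, so $L(X^c)\be_u=\zero$, and therefore $U_{X^c}(t)\be_u=\be_u$; in particular $U_{X^c}(t)_{u,u}=1$. Theorem~\ref{thm:regupperbound} with $u=v$ and $\delta=n$ gives $|1-e^{itn}U_X(-t)_{u,u}|=f_{u,u}(t)\leq\frac{2}{n}$. By the reverse triangle inequality, $1-|U_X(-t)_{u,u}|\leq \frac{2}{n}$, and since $|U_X(-t)_{u,u}|=|U_X(t)_{u,u}|$ this yields $|U_X(t)_{u,u}|\geq 1-\frac{2}{n}$ for all $t\in\R$. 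When $n\geq 3$ the constant $C=1-\frac{2}{n}$ is positive, so $u$ is $C$-sedentary.

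There is no real obstacle here. The only delicate point is making explicit the regularity hypothesis under which Theorem~\ref{thm:regupperbound} applies in part 1. Part 2 needs no extra assumption, because Proposition~\ref{prop:XXbar} and Theorem~\ref{thm:regupperbound} hold for every graph when $M=L$.
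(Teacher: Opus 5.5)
Your proof is correct. Part 2 is essentially the paper's argument: the paper applies Theorem~\ref{thm:regupperbound} to get $g_{u,u}(t)=\big|1-|U_X(t)_{u,u}|\big|\leq\frac{2}{n}$, and your reverse-triangle-inequality step on $f_{u,u}$ is exactly Proposition~\ref{prop:regupperbound1}.

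Part 1 takes a different route in its final step. Both arguments start from $U_{X^c}(t)_{u,v}=0$. The paper then writes $X=X_1\vee X_2$ with $u\in V(X_1)$, $v\in V(X_2)$ and reads off $|U_X(t)_{u,v}|$ from the explicit join formula in Theorem~\ref{thm:peakstjoin1}. That formula gives $\frac{1}{n}|1-e^{itn}|$ for $M=L$, and $\frac{1}{\sqrt{\Delta}}|1-e^{-it\sqrt{\Delta}}|$ for $M=A$ when $X_1,X_2$ are $k$- and $\ell$-regular. You instead use the complement identity of Proposition~\ref{prop:XXbar} via Theorem~\ref{thm:regupperbound}. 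This needs no join decomposition and gives $|U_X(t)_{u,v}|=\frac{1}{n}|1-e^{itn}|$ directly under (i) or (ii).

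The paper's route also covers joins of regular graphs that are not themselves regular, but there it only yields the bound $\frac{2}{\sqrt{\Delta}}$. Now $\sqrt{\Delta}\geq n$ holds if and only if $(k-\ell)^2\geq(n_1-n_2)^2$. This holds when $X$ is regular, since then $k-\ell=n_1-n_2$ and $\Delta=n^2$, but it fails, for example, for stars. So your restriction to (i) or (ii) is what makes the $\frac{2}{n}$ claim true.

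Your $K_{1,r}$ example is a valid counterexample to the literal statement for $M=A$: there $\max_t|U_X(t)_{u,v}|=\frac{1}{\sqrt{r}}=\frac{2}{\sqrt{\Delta}}>\frac{2}{r+1}$ for $r\geq 2$. The paper itself records the $\frac{2}{\sqrt{\Delta}}$ bound for such pairs in Example~\ref{ex:projectorboundonjoin} and Proposition~\ref{prop:NG}(3).
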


\begin{proof}
The assumption of 1 yields $U_{X^c}(t)_{u,v}=0$ for all $t\in\R$, and so $f_{u,v}(t)=|U_X(-t)_{u,v}|g_{u,v}(t)$, so Theorem~\ref{thm:peakstjoin1} gives us 1. To prove 2, let $M=L$. If $u$ is isolated in $X^c$, then $|U_{X^c}(t)_{u,u}|=1$ for all $t\in\R$, so Theorem~\ref{thm:regupperbound} implies $g_{u,u}(t)=\big|1-|U_X(t)_{u,u}|\big|\leq \frac{2}{n}$. From this, 2 is immediate.
\end{proof}

\section{Non-regular graphs}
\label{sec:nonreg}

Throughout, we let $M=A$. Motivated by Theorem~\ref{thm:regupperbound}, we derive upper bounds for $f_{u,v}(t)$ whenever $X$ is irregular. In order to do this, we recall the concept of main eigenvectors.

An eigenvector $\mathbf{v}$ of $A(X)$ is \textit{main} if it is not orthogonal to $\mathbf{1}$. Otherwise, an eigenvector $\mathbf{v}$ is called \textit{non-main}. An eigenvalue $\lambda\in\spec(X)$  is \textit{main} if $E_\lambda\one\neq 0$ (i.e., at least one of its associated eigenvectors is main). Otherwise, an eigenvalue $\lambda\in \spec(X)$ is called \textit{non-main}. We denote the set of main and non-main eigenvalues of $X$ by $\mspec(X)$ and $\nspec(X)$, respectively.

We state two basic results about main eigenvectors that are useful in this paper. The first follows from the proof of \cite[Proposition 1.3]{akbari2026switching}, and the second is immediate from (\ref{eq:evalcomp}).

\begin{prop}
\label{prop:maineigsp}
If $\lambda\in\mspec(X)$, then there exists an orthonormal basis for the eigenspace of $\lambda$ consisting of all main eigenvectors.
\end{prop}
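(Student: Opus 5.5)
The plan is a Gram--Schmidt construction inside the eigenspace. Let $\lambda\in\mspec(X)$ and let $W$ be the eigenspace of $A(X)$ for $\lambda$, of dimension $d$. Since $\lambda$ is main, $E_\lambda\one\neq\zero$. Set $\bw_1=E_\lambda\one/\|E_\lambda\one\|$. Then $\bw_1\in W$, and $\bw_1^\top\one=\|E_\lambda\one\|>0$ because $E_\lambda$ is a symmetric idempotent, so $\one^\top E_\lambda\one=\|E_\lambda\one\|^2$. Moreover, every vector of $W$ orthogonal to $\bw_1$ is also orthogonal to $\one$. Indeed, if $\bx\in W$ then $\bx^\top\one=\bx^\top E_\lambda\one$, which is a multiple of $\bx^\top\bw_1$. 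So $W$ splits orthogonally as $\mathrm{span}(\bw_1)\oplus W'$, where $W'$ consists of the non-main vectors in $W$.

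If $d=1$, we are done. Otherwise, take any orthonormal basis $\bw_2,\ldots,\bw_d$ of $W'$ and rotate so that every vector picks up a component along $\bw_1$. The cleanest choice is a symmetric rotation: set $Q=I_d-\frac{2}{d}\mathbf{J}_d$, which is orthogonal because $\mathbf{J}_d^2=d\mathbf{J}_d$ gives $Q^2=I$. Let $[\bv_1,\ldots,\bv_d]=[\bw_1,\ldots,\bw_d]Q$. The $\bv_j$ form an orthonormal basis of $W$. Their inner products with $\one$ are the entries of $(\bw_1^\top\one,0,\ldots,0)\,Q$, namely $\bw_1^\top\one\,(1-\frac{2}{d})$ for $j=1$ and $-\frac{2}{d}\bw_1^\top\one$ for $j\geq 2$.

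The one obstacle is the case $d=2$, where the first coordinate $1-\frac{2}{d}$ vanishes. I will avoid it by using a generic rotation instead. Any orthogonal $Q$ whose first row has all entries nonzero works, for example a Householder reflection $I-2\mathbf{q}\mathbf{q}^\top$ with $\mathbf{q}$ a unit vector chosen so that $1-2q_1^2\neq0$ and $q_1q_j\neq0$ for all $j$. For $d=2$, the rotation by $\pi/4$ also works. With such a $Q$, every $\bv_j$ satisfies $\bv_j^\top\one=\bw_1^\top\one\,Q_{1j}\neq0$, so each $\bv_j$ is main, which proves Proposition~\ref{prop:maineigsp}.
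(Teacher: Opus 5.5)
Your proof is correct and complete. The identity $\bx^\top\one=\|E_\lambda\one\|\,\bx^\top\bw_1$ for every $\bx$ in the eigenspace $W$ is the key step. It shows that $W=\mathrm{span}(\bw_1)\oplus(W\cap\one^\perp)$. Applying any orthogonal $Q$ whose first row has no zero entries then gives an orthonormal basis with $\bv_j^\top\one=Q_{1j}\,\|E_\lambda\one\|\neq 0$ for every $j$. You also correctly spotted and repaired the $d=2$ failure of $I_d-\frac{2}{d}\mathbf{J}_d$.

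The paper gives no argument of its own here; it only points to the proof of Proposition 1.3 in the Akbari et al.\ reference. Your write-up therefore has the advantage of being self-contained, and it needs nothing beyond the fact that $E_\lambda$ is the orthogonal projection onto $W$.

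One small simplification removes the case split. Take $Q$ to be the Householder reflection interchanging $\be_1$ and $\frac{1}{\sqrt{d}}\one$ (for $d\geq 2$). It is symmetric and maps $\be_1$ to $\frac{1}{\sqrt{d}}\one$, so its first row is $\frac{1}{\sqrt{d}}\one^\top$. Every $\bv_j$ then satisfies $\bv_j^\top\one=\|E_\lambda\one\|/\sqrt{d}\neq 0$ uniformly in $d$.
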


\begin{prop}
\label{prop:main}
We have that $\bv$ is a non-main eigenvector associated with $\lambda\in\spec(X)$ if and only if $\bv$ is non-main eigenvector associated with $-(1+\lambda)\in\spec(X^c)$.
\end{prop}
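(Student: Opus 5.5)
The statement to prove is Proposition~\ref{prop:main}: for a non-main eigenvector $\bv$ (so $\one^\top\bv=0$), $\bv$ is a $\lambda$-eigenvector of $A(X)$ if and only if it is a $-(1+\lambda)$-eigenvector of $A(X^c)$. The plan is to apply the first identity in (\ref{eq:evalcomp}), $A(X^c)=\mathbf{J}-I-A(X)$, directly to $\bv$ and let the orthogonality to $\one$ remove the $\mathbf{J}$ term.

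For the forward direction, let $\bv$ be a non-main eigenvector of $A(X)$ with eigenvalue $\lambda$. Then $\one^\top\bv=0$, and since $\mathbf{J}=\one\one^\top$ we get $\mathbf{J}\bv=\one(\one^\top\bv)=\zero$. Hence
\[
A(X^c)\bv=\mathbf{J}\bv-\bv-A(X)\bv=-(1+\lambda)\bv .
\]
So $\bv$ is an eigenvector of $A(X^c)$ for $-(1+\lambda)$, and it is non-main there because being orthogonal to $\one$ does not depend on the graph.

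For the converse, apply the same computation with the roles of $X$ and $X^c$ exchanged. We have $A(X)=\mathbf{J}-I-A(X^c)$ and $(X^c)^c=X$. So a non-main $\mu$-eigenvector $\bv$ of $A(X^c)$ with $\mu=-(1+\lambda)$ satisfies $A(X)\bv=-(1+\mu)\bv=\lambda\bv$.

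There is no real obstacle. The only point needing care is that orthogonality to $\one$ is exactly the condition that makes $\mathbf{J}\bv=\zero$, and this condition is symmetric in $X$ and $X^c$.
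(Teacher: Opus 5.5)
Your proof is correct and is the same argument the paper has in mind. The paper calls the result immediate from $A(X)+A(X^c)=\mathbf{J}-I$ in (\ref{eq:evalcomp}): orthogonality to $\one$ gives $\mathbf{J}\bv=\zero$, and the converse follows by exchanging $X$ and $X^c$, as you do.
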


We show that
the matrix $U_{X^c}(t)-e^{-it}U_X(-t)$ is determined by the main eigenvectors of $X$ and $X^c$.

\begin{lemma}
\label{lem: transition_difference}
Let $\{\mathbf{v}_1,\dots, \mathbf{v}_m,\mathbf{v}_{m+1}\dots,\mathbf{v}_n\}$ and  $\{\mathbf{u}_1,\dots,\mathbf{u}_m,\mathbf{v}_{m+1}\dots,\mathbf{v}_n\}$ be orthonormal bases of eigenvectors of $A(X)$ and $A(X^c)$ respectively, where $\bu_j, \bv_j$ are main for all $j\in\{1,\ldots,m\}$ and non-main otherwise. Then
    \begin{equation*}
    U_{X^c}(t)-e^{-it}U_X(-t) = \sum_{j = 1}^m\big(e^{it\lambda_j}\mathbf{u}_j\mathbf{u}_j^\top - e^{-it(1+\eta_j)}\mathbf{v}_j\mathbf{v}_j^\top\big)\quad \text{for all $t\in\R$},
\end{equation*}
where $\lambda_j$ and $\eta_j$ are the main eigenvalues of $A(X^c)$ and $A(X)$ corresponding to $\mathbf{u}_j$ and $\mathbf{v}_j$, respectively. 
\end{lemma}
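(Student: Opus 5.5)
The argument is a direct spectral computation. Write both transition matrices in terms of the given orthonormal eigenbases and show that the non-main parts cancel exactly.

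First, justify the setup. By Proposition~\ref{prop:maineigsp}, each main eigenspace of $X$ has an orthonormal basis of main eigenvectors, though it is more convenient to argue as follows. The span $W$ of the non-main eigenvectors of $A(X)$ is exactly the set of eigenvectors orthogonal to $\one$, together with $\zero$. By Proposition~\ref{prop:main}, every non-main eigenvector $\bv$ of $A(X)$ with eigenvalue $\eta$ is a non-main eigenvector of $A(X^c)$ with eigenvalue $-(1+\eta)$, and conversely. Hence the common vectors $\bv_{m+1},\dots,\bv_n$ in the hypothesis are simultaneously eigenvectors of both matrices. I would only need the hypothesis as stated, together with the observation that, for $j>m$,
\[
A(X^c)\bv_j=(\mathbf{J}-I-A(X))\bv_j=-(1+\eta_j)\bv_j,
\]
where $\eta_j$ is the eigenvalue of $\bv_j$ for $A(X)$. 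This holds because $\mathbf{J}\bv_j=\zero$, since $\bv_j$ is non-main.

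Next, expand. Since $\{\bv_j\}$ is an orthonormal eigenbasis of $A(X)$, we have $U_X(-t)=\sum_{j=1}^n e^{-it\eta_j}\bv_j\bv_j^\top$, and therefore
\[
e^{-it}U_X(-t)=\sum_{j=1}^n e^{-it(1+\eta_j)}\bv_j\bv_j^\top .
\]
Similarly, $U_{X^c}(t)=\sum_{j=1}^m e^{it\lambda_j}\bu_j\bu_j^\top+\sum_{j=m+1}^n e^{it\lambda'_j}\bv_j\bv_j^\top$, where $\lambda'_j$ is the eigenvalue of $\bv_j$ for $A(X^c)$. By the observation above, $\lambda'_j=-(1+\eta_j)$. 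Subtracting the two expansions, the terms with $j>m$ cancel identically, which leaves exactly the claimed sum.

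The only potential subtlety is the eigenvalue correspondence for the shared non-main vectors. It is needed to ensure that the coefficient of $\bv_j\bv_j^\top$ in $U_{X^c}(t)$ is $e^{-it(1+\eta_j)}$, and that follows from $\mathbf{J}\bv_j=\zero$ as shown. The rest is bookkeeping with spectral decompositions via rank-one projections. Using rank-one projections rather than the idempotents $E_\lambda$ avoids having to worry about eigenspaces that contain both main and non-main vectors.
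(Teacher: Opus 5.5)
Your proof is correct and is essentially the paper's: both expand $U_X$ and $U_{X^c}$ in the given eigenbases and cancel the shared non-main terms. The only difference is that you verify the eigenvalue $-(1+\eta_j)$ of $\bv_j$ for $A(X^c)$ directly from $\mathbf{J}\bv_j=\zero$, whereas the paper cites Proposition~\ref{prop:main}. (Your aside that the span of the non-main eigenvectors is ``exactly the set of eigenvectors orthogonal to $\one$, together with $\zero$'' is not accurate as stated, since that span contains sums of eigenvectors for different eigenvalues, but the argument never uses it.)
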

\begin{proof}
     Observe that
\begin{equation*}
    U_X(t) = \sum_{j = 1}^me^{it\eta_j}\mathbf{v}_j\mathbf{v}_j^\top + \sum_{j=m+1}^ne^{it(\eta_j)}\mathbf{v}_j\mathbf{v}_j^\top.
\end{equation*}
By Proposition~\ref{prop:main}, $\mathbf{v}_j$ is a non-main eigenvector of $A(X)$ with eigenvalue $\eta_j$ if and only if $\bv_j$ is a non-main eigenvector of $A(X^c)$ with eigenvalue $-(1+\eta_j)$. Consequently, 
\begin{equation*}
    U_{X^c}(t)= \sum_{i = j}^me^{it\lambda_j}\mathbf{u}_j\mathbf{u}_j^\top + \sum_{j=m+1}^ne^{-it(1+\eta_j)}\mathbf{v}_j\mathbf{v}_j^\top
\end{equation*}
The claim then follows.
\end{proof}

As spectral idempotents sum to $I$, $\bv_j$'s and $\bu_j$'s in Lemma~\ref{lem: transition_difference} satisfy $\sum_{j=1}^m\bv_j\bv_j^\top=\sum_{j=1}^m\bu_j\bu_j^\top$. Let
\begin{center}
$
P:=\displaystyle\sum_{j=1}^m\bv_j\bv_j^\top=\sum_{j=1}^m\bu_j\bu_j^\top.$
\end{center}
The \textit{orthonormalized walk matrix} of a graph $X$ is the matrix $W$ whose columns are the orthonormalized elements of $\{\one,A\one,A^2\one,\dots, A^{m-1}\one\}$. From \cite[Theorem 2.4]{rowlinson2007maineigenvaluesurvery}, a basis for the image of $P$ is $\{\one,A\one,A^2\one,\dots, A^{m-1}\one\}$, meaning $P=WW^T$. Using this construction of $P$, we obtain the following bound on $f_{u,v}(t)$ that depends only on $A$.

\begin{theorem}
    \label{thm:projector upper bound}
If $u,v$ are vertices in a graph $X$, then $f_{u,v}(t) \leq 2\sqrt{P_{u,u}}\sqrt{P_{v,v}}.$
\end{theorem}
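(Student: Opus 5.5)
Our starting point is Lemma~\ref{lem: transition_difference}, which writes the matrix $U_{X^c}(t)-e^{-it}U_X(-t)$ as a difference of two operators supported on the image of $P$. Set
\begin{equation*}
Q_1(t):=\sum_{j=1}^m e^{it\lambda_j}\bu_j\bu_j^\top \quad\text{and}\quad Q_2(t):=\sum_{j=1}^m e^{-it(1+\eta_j)}\bv_j\bv_j^\top .
\end{equation*}
Since $\delta=-1$ for $M=A$, we have $f_{u,v}(t)=|Q_1(t)_{u,v}-Q_2(t)_{u,v}|$. By the triangle inequality, $f_{u,v}(t)\le |Q_1(t)_{u,v}|+|Q_2(t)_{u,v}|$. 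It therefore suffices to show that each term is at most $\sqrt{P_{u,u}}\sqrt{P_{v,v}}$.

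For $Q_1$, we use the Cauchy--Schwarz inequality. Writing $(\bu_j)_u$ for the $u$-entry of $\bu_j$, we have
\begin{equation*}
|Q_1(t)_{u,v}|=\Big|\sum_{j=1}^m e^{it\lambda_j}(\bu_j)_u(\bu_j)_v\Big|\le \Big(\sum_{j=1}^m (\bu_j)_u^2\Big)^{1/2}\Big(\sum_{j=1}^m (\bu_j)_v^2\Big)^{1/2}.
\end{equation*}
The two sums on the right are exactly $(\sum_j\bu_j\bu_j^\top)_{u,u}=P_{u,u}$ and $P_{v,v}$. The same computation with the $\bv_j$ gives $|Q_2(t)_{u,v}|\le\sqrt{P_{u,u}}\sqrt{P_{v,v}}$, because $P=\sum_j\bv_j\bv_j^\top$ as well. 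Adding the two bounds yields the claim.

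An equivalent, more conceptual way to see the same bound is to note that $Q_1(t)=PU_{X^c}(t)P$. Indeed, the image of $P$ is spanned by main eigenvectors of $X^c$, and it is invariant under $U_{X^c}(t)$. Hence $Q_1(t)_{u,v}=\langle P\be_u, U_{X^c}(t)P\be_v\rangle$, which is bounded by $\|P\be_u\|\|P\be_v\|=\sqrt{P_{u,u}P_{v,v}}$ because $U_{X^c}(t)$ is unitary and $P$ is an orthogonal projection.

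The only point requiring care is the identity $\sum_j\bu_j\bu_j^\top=\sum_j\bv_j\bv_j^\top$, that is, that the same projector $P$ governs both sums. The paper records this just before the statement: it follows from the fact that both orthonormal bases share the non-main vectors $\bv_{m+1},\dots,\bv_n$, together with Propositions~\ref{prop:maineigsp} and~\ref{prop:main}. The description $P=WW^\top$ is not needed for the inequality itself; it only shows that the bound depends on $A$ alone. I expect no real obstacle here; the argument is a direct Cauchy--Schwarz estimate.
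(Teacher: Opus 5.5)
Your proof is correct and follows the paper's argument: expand $U_{X^c}(t)-e^{-it}U_X(-t)$ by the transition-difference lemma, split with the triangle inequality, and bound each of the two sums by Cauchy--Schwarz using $P_{u,u}=\sum_{j=1}^m(\bu_j)_u^2=\sum_{j=1}^m(\bv_j)_u^2$. Your reformulation $Q_1(t)=PU_{X^c}(t)P$, bounded via $\|P\be_u\|\,\|P\be_v\|$, is also valid, but it is a repackaging of the same estimate rather than a different route.
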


\begin{proof}
    Applying Lemma~\ref{lem: transition_difference} and the triangle inequality, we have
    \[f_{u,v}(t) \leq \sum_{j = 1}^m\left| (\mathbf{u}_j\mathbf{u}_j^\top)_{u,v}\right|+ \sum_{j=1}^m \left|(\mathbf{v}_j\mathbf{v}_j^\top)_{u,v}\right|.\]
    Of course $\left|(\mathbf{u}_j\mathbf{u}_j^\top)_{u,v}\right|=\left| (\mathbf{u}_j^\top\be_u)(\mathbf{u}_j^\top\be_v)\right|$, and so the Cauchy-Schwarz inequality yields
    \[f_{u,v}(t)\leq \sqrt{\sum_{j=1}^m(\mathbf{u}_j^\top\be_u)^2}\sqrt{\sum_{j=1}^m(\mathbf{u}_j^\top\be_v)^2} + \sqrt{\sum_{j=1}^m(\mathbf{v}_j^\top\be_u)^2}\sqrt{\sum_{j=1}^m(\mathbf{v}_j^\top\be_v)^2}.\]
    Since $P_{u,u}=\sum_{j=1}^m(\mathbf{u}_j^\top\be_u)^2=\sum_{j=1}^m(\mathbf{v}_j^\top\be_u)^2$, we obtain \(f_{u,v}(t) \leq 2\sqrt{P_{u,u}}\sqrt{P_{v,v}}\) as claimed.
\end{proof}

\begin{rem}
    The number of main eigenvalues of a graph is known to be the largest integer $m$ such that the set $\{\one, A\one,A^2\one, \dots, A^{m-1}\one\}$ is linearly independent \cite[Proposition 2.2]{rowlinson2007maineigenvaluesurvery}. Hence, Theorem~\ref{thm:projector upper bound} can be applied to any graph $X$ without knowing the spectral decomposition of $A(X)$ or $A(X^c)$. 
\end{rem}
  
We examine special cases of Theorem~\ref{thm:projector upper bound}. A graph $X$ is regular if and only if $\one$ is the only main eigenvector of $A(X)$ \cite[Proposition 1.4]{rowlinson2007maineigenvaluesurvery}. So  we may view Proposition~\ref{prop:XXbar} with $M=A$ and $X$ being regular as a special case of Theorem~\ref{thm:projector upper bound}. A connected graph $X$ is \textit{controllable} if all eigenvalues of $A(X)$ are simple and main \cite{Cvetokvic2010ControlGraphs}. For such a graph, $P=I$, and so Theorem~\ref{thm:projector upper bound} yields $f_{u,v}(t)\leq 2$, which matches Proposition~\ref{prop:regupperbound2}. For large values of $n$, almost all graphs are controllable \cite{ORourkeTouri2016ControllableGraphs}.

For connected graphs with two linearly independent  main eigenvectors, we observe the following.

\begin{prop}
\label{prop:2mainevec}
Let $X$ be a connected graph. Then $A(X)$ has exactly two linearly independent main eigenvectors if and only if $A(X)$ has exactly two main eigenvalues that are both simple. In this case, one of the main eigenvectors is a Perron eigenvector.
\end{prop}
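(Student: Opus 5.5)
The plan is to work with the orthogonal decomposition $\R^n=\operatorname{span}\{\one\}\oplus\one^\perp$ and the main subspace $\operatorname{Im}(P)$, which by \cite[Theorem 2.4]{rowlinson2007maineigenvaluesurvery} is spanned by $\one,A\one,\ldots,A^{m-1}\one$. Here $m$ is the number of distinct main eigenvalues, so $\dim\operatorname{Im}(P)=m$.

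The first step is to observe that $\operatorname{Im}(P)$ is spanned by the vectors $E_\lambda\one$ for $\lambda\in\mspec(X)$. This holds because $\one=\sum_\lambda E_\lambda\one$ and $A^j\one=\sum_\lambda\lambda^jE_\lambda\one$; a Vandermonde argument then shows that these $m$ nonzero, mutually orthogonal vectors span the same space. Consequently, the number of linearly independent main eigenvectors is at least $m$, with equality exactly when every main eigenvector lies in $\operatorname{Im}(P)$. The key claim is sharper: for each main eigenvalue $\lambda$, the eigenspace of $\lambda$ contains a main eigenvector outside $\operatorname{span}\{E_\lambda\one\}$ whenever $\dim E_\lambda>1$. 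To see this, take the orthonormal basis of main eigenvectors given by Proposition~\ref{prop:maineigsp}. Then the total count of linearly independent main eigenvectors equals $\sum_{\lambda\in\mspec(X)}\dim(\text{eigenspace of }\lambda)$, since Proposition~\ref{prop:maineigsp} says each main eigenspace has a basis of main vectors and eigenvectors of distinct eigenvalues are independent. I would interpret ``exactly two linearly independent main eigenvectors'' as this maximal count.

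With that interpretation, the equivalence follows directly. If the count $\sum_{\lambda\in\mspec}\dim=2$, note that the Perron eigenvalue is always main, because the Perron vector is positive and hence not orthogonal to $\one$. If there were only one main eigenvalue, the graph would be regular, its main eigenspace would be one-dimensional by connectivity, and the count would be $1$. So there are at least two main eigenvalues, and since each contributes a dimension of at least one, there are exactly two and both are simple. The converse is immediate: two simple main eigenvalues give a count of $1+1=2$. Finally, since the largest eigenvalue $\rho$ of the connected graph is simple with a positive eigenvector, it is one of the two main eigenvalues, so one main eigenvector is the Perron eigenvector.

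The main obstacle is making the notion of ``number of linearly independent main eigenvectors'' precise, because main vectors do not form a subspace. Proposition~\ref{prop:maineigsp} is the tool that turns the count into the sum of the dimensions of the main eigenspaces. Showing that this sum exceeds $2$ whenever some main eigenvalue is non-simple is immediate from that proposition, so the proof should be short once this point is set up.
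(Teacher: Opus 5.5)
Your proof is correct and follows essentially the same route as the paper. Both read ``exactly two linearly independent main eigenvectors'' as the maximal count, both use Proposition~\ref{prop:maineigsp} to see that a non-simple main eigenvalue would contribute at least two independent main eigenvectors, and both use that the Perron eigenvalue of a connected graph is main and simple. Your opening discussion of $\operatorname{Im}(P)$ and the Vandermonde argument is correct but not needed for the conclusion.
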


\begin{proof}
Let $\bu$ and $\bv$ be the two linearly independent main eigenvectors of $A(X)$. Since a Perron eigenvector has all positive entries, it follows that it is a main eigenvector, and hence it must be one of $\bu$ or $\bv$. Suppose $\bu$ is a Perron eigenvector and $\bv$ is a main eigenvector for some eigenvalue $\lambda$. If $\lambda$ has multiplicity at least two, then Proposition~\ref{prop:maineigsp} and the fact that the eigenvectors of a symmetric matrix corresponding to distinct eigenvalues are orthogonal imply that there are at least three linearly independent main eigenvectors, a contradiction. Thus, $\lambda$ is a simple eigenvalue of $A(X)$. The converse is immediate.
\end{proof}

As a demonstration of Theorem~\ref{thm:projector upper bound}, we now derive an upper bound for $f_{u,v}(t)$ when $X$ is irregular and $A(X)$ has two linearly independent main eigenvectors. Let $d_u$ and $d_{avg}$ denote the degree of $u$ in $X$ and the average degree of $X$, respectively.

\begin{theorem}
\label{thm:nonreguv}
Let $X$ be a graph on $n$ vertices with average degree $d_{avg}$. If $A(X)$ has exactly two linearly independent main eigenvectors, then for all $u,v\in V(X)$,
    \[f_{u,v}(t) \leq \frac{2}{n}\sqrt{\bigg(1+\frac{n(d_u-d_{avg})^2}{\sum_{w\in V(X)}(d_w-d_{avg})^2}\bigg)\bigg(1+\frac{n(d_v-d_{avg})^2}{\sum_{w\in V(X)}(d_w-d_{avg})^2}\bigg)},\quad \text{for all $t\in\R$}.\]
\end{theorem}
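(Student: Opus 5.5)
The plan is to apply Theorem~\ref{thm:projector upper bound}, so everything reduces to computing the diagonal entries $P_{u,u}$ of the projector $P=WW^\top$. Here $m=2$, so the column space of $P$ is spanned by $\{\one, A\one\}$. Note that $A\one=\mathbf{d}$ is the degree vector, with $\mathbf{d}_w=d_w$.

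First I will orthonormalize $\{\one,\mathbf{d}\}$ by Gram--Schmidt. The first column of $W$ is $\frac{1}{\sqrt n}\one$. Subtracting the projection of $\mathbf{d}$ onto $\one$ gives
\[
\mathbf{d}-\frac{\one^\top\mathbf{d}}{n}\one=\mathbf{d}-d_{avg}\one .
\]
This vector has entries $d_w-d_{avg}$, and it is nonzero because $X$ is irregular. Irregularity holds since there are two linearly independent main eigenvectors, while regularity would force $\one$ to be the only main eigenvector (Proposition 1.4 of the survey cited before Theorem~\ref{thm:projector upper bound}). Let $S=\sum_{w\in V(X)}(d_w-d_{avg})^2$. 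Then the second column of $W$ is $S^{-1/2}(\mathbf{d}-d_{avg}\one)$.

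Next I will write
\[
P=\frac1n\mathbf J+\frac1S(\mathbf{d}-d_{avg}\one)(\mathbf{d}-d_{avg}\one)^\top,
\]
so that
\[
P_{u,u}=\frac1n+\frac{(d_u-d_{avg})^2}{S}=\frac1n\left(1+\frac{n(d_u-d_{avg})^2}{S}\right),
\]
and $P_{v,v}$ has the same form.

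Finally, substituting into $f_{u,v}(t)\le 2\sqrt{P_{u,u}}\sqrt{P_{v,v}}$, the two factors of $\frac{1}{\sqrt n}$ combine into $\frac1n$. This gives exactly the claimed bound.

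The only point needing care is justifying $m=2$ in the walk-matrix description, namely that $P=WW^\top$ with $W$ built from $\{\one,A\one\}$. The hypothesis "exactly two linearly independent main eigenvectors" means $P$ has rank $2$. I will argue this carefully. By Proposition~\ref{prop:maineigsp}, each main eigenspace contributes to $P$ only through the projection of $\one$ onto it, so the rank of $P$ equals the number of main eigenvalues, hence $m=2$. Combined with the cited fact that the image of $P$ is spanned by $\{\one,A\one,\dots,A^{m-1}\one\}$, this pins down $W$. Connectivity is not needed for any of these steps.
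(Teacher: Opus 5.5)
Your proposal is correct and is essentially the paper's own proof: orthonormalize $\{\one,A\one\}$ as $\frac{1}{\sqrt n}\one$ and $(A\one-d_{avg}\one)/\sqrt{\sum_w(d_w-d_{avg})^2}$, read off $P_{u,u}=\frac1n+\frac{(d_u-d_{avg})^2}{\sum_w(d_w-d_{avg})^2}$, and substitute into Theorem~\ref{thm:projector upper bound}. One correction to your last paragraph: the facts that the image of $P$ is spanned by $\{\one,A\one\}$ and that $X$ is irregular come from the paper's standing connectivity assumption via Proposition~\ref{prop:2mainevec} (both main eigenvalues are simple), not from Proposition~\ref{prop:maineigsp}, so connectivity is in fact used---without it, two disjoint copies of $K_3$ have two linearly independent main eigenvectors (the indicator vectors of the two components) yet are regular, and the denominator in the bound vanishes.
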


\begin{proof}
Recall that the image of $P$ has basis $\{\one,A\one\}$. The set $\left\{\frac{\one}{\sqrt{n}},\frac{A\one - d_{avg}\one}{\sqrt{\sum_{w\in V(X)}(d_w-d_{avg})^2}}\right\}$ is an orthonormalization of $\{\one,A\one\}$, which we use to compute $P$. We find that
    \[P_{u,u} = \frac{1}{n} + \frac{(d_u-d_{avg})^2}{\sum_{w\in V(X)}(d_w-d_{avg})^2}.\]
The claim then follows from this fact and Theorem~\ref{thm:projector upper bound}.
\end{proof}

\begin{rem}
In Theorem~\ref{thm:nonreguv}, if $u,v$ both have average degree, then the upper bound for $f_{u,v}(t)$ coincides with that in Theorem~\ref{thm:regupperbound}. However, if at least one of $u,v$ has degree not equal to the average degree, then Theorem~\ref{thm:nonreguv} yields a larger upper bound for $f_{u,v}(t)$ than that in Theorem~\ref{thm:regupperbound}. 
\end{rem}

If a graph $X$ has vertices with only two distinct degrees and $A(X)$ has exactly two linearly independent main eigenvectors, then Theorem~\ref{thm:nonreguv} cleans up nicely. 

\begin{cor}
    \label{cor:twomain bidegree bound}
    Let $X$ be a $n_1+n_2$ vertex graph with exactly two linearly independent main eigenvectors, $n_1$ vertices of degree $k$, and $n_2$ vertices of degree $\ell$. Then 
    \[f_{u,v}(t) \leq \begin{cases}
        2/n_1, &d_u=d_v=k\\
        2/n_2, &d_u=d_v=\ell\\
        2/\sqrt{n_1n_2}&d_u\neq d_v.\\
    \end{cases}\]
\end{cor}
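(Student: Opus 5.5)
The plan is to specialize Theorem~\ref{thm:nonreguv} by computing the sum $\sum_{w}(d_w-d_{avg})^2$ for a bidegreed graph and simplifying $P_{u,u}$. Set $n=n_1+n_2$. Then $d_{avg}=\frac{n_1k+n_2\ell}{n}$, so for a vertex of degree $k$ we get $d_u-d_{avg}=\frac{n_2(k-\ell)}{n}$, and for a vertex of degree $\ell$ we get $d_u-d_{avg}=-\frac{n_1(k-\ell)}{n}$. Consequently
\[
\sum_{w\in V(X)}(d_w-d_{avg})^2=\frac{(k-\ell)^2}{n^2}\big(n_1n_2^2+n_2n_1^2\big)=\frac{n_1n_2(k-\ell)^2}{n}.
\]
Here $k\neq\ell$ is automatic: if $k=\ell$ the graph would be regular and so have only one main eigenvector, by the regularity characterization quoted before Proposition~\ref{prop:2mainevec}.

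Next, I would substitute this into the formula for $P_{u,u}$ from the proof of Theorem~\ref{thm:nonreguv}. If $d_u=k$, then
\[
P_{u,u}=\frac1n+\frac{n_2^2(k-\ell)^2/n^2}{n_1n_2(k-\ell)^2/n}=\frac1n+\frac{n_2}{n_1n}=\frac{n_1+n_2}{n_1n}=\frac1{n_1}.
\]
Symmetrically, if $d_u=\ell$, then $P_{u,u}=\frac1{n_2}$. Finally I would apply Theorem~\ref{thm:projector upper bound}, which gives $f_{u,v}(t)\le 2\sqrt{P_{u,u}P_{v,v}}$. The three cases in the statement come out directly: $2/n_1$ when both degrees are $k$, $2/n_2$ when both are $\ell$, and $2/\sqrt{n_1n_2}$ when $d_u\neq d_v$.

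No step here is a real obstacle; the computation is routine. The only points needing care are the algebraic simplification of the variance, where the factor $n_1n_2(n_1+n_2)$ must cancel cleanly, and the justification that $k\neq\ell$, so that the denominator is nonzero.
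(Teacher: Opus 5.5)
Your proposal is correct and follows essentially the same route as the paper. Both compute $d_{avg}$, the deviations $n_2(k-\ell)/n$ and $-n_1(k-\ell)/n$, and the sum $\sum_w (d_w-d_{avg})^2 = n_1n_2(k-\ell)^2/n$, and then feed these into Theorem~\ref{thm:nonreguv}, or equivalently into $P_{u,u}$ and Theorem~\ref{thm:projector upper bound}. Writing the result as $P_{u,u}\in\{1/n_1,1/n_2\}$ is slightly cleaner than the paper's reduction of the factor to $1+n_2/n_1$ or $1+n_1/n_2$, and you also make explicit that $k\neq\ell$ is forced, which the paper leaves implicit.
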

\begin{proof}
Note that $d_{avg} = \frac{n_1k + n_2\ell}{n_1+n_2}$. If $d_u=k$, then $(d_u-d_{avg})^2 = \frac{n_2^2(k-\ell)^2}{(n_1+n_2)^2}$. If $d_u=\ell$, then $(d_u-d_{avg})^2 = \frac{n_1^2(k-\ell)^2}{(n_1+n_2)^2}$. Observe that $\sum_{w\in V(X)}(d_w-d_{avg})^2 = \frac{n_1n_2(k-\ell)^2}{(n_1+n_2)}$. The expression $1+\frac{n(d_u-d_{avg})^2}{\sum_{w\in V(X)}(d_w-d_{avg})^2}$ then simplifies to $1+\frac{n_2}{n_1}$ if $d_u=k$, and $1+\frac{n_1}{n_2}$ if $d_u=\ell$. The claim follows from this and Theorem~\ref{thm:nonreguv}.  
\end{proof}

By Proposition~\ref{prop:2mainevec}, graphs that meet the hypotheses of Corollary~\ref{cor:twomain bidegree bound} include nonregular joins of regular graphs and a family of coconnected split graphs that are explored in Section~\ref{sec:split}. We now demonstrate that with careful analysis of $X$ and $X^c$, it is possible to acquire bounds tighter than what Theorem~\ref{thm:projector upper bound} provides.

\begin{ex}
\label{ex:projectorboundonjoin}
Let $X,Y$ be $k,\ell$-regular graphs on $n_1,n_2$ vertices, respectively. If $k+n_2=\ell+n_1$, then $X\lor Y$ is regular and $f_{u,v}(t)\leq \frac{2}{n_1+n_2}$ by Theorem~\ref{thm:regupperbound}. If $k+n_2\neq \ell+n_1$, then $X\lor Y$ is irregular with all vertices having degree $k+n_2$ or $\ell+n_1$, and $A(X\lor Y)$ has exactly two linearly independent main eigenvectors. Thus,  Corollary $\ref{cor:twomain bidegree bound}$ grants us bounds for $f_{u,v}(t)$. When $u,v$ both belong to $X$ or to $Y$, Corollary~\ref{cor:joinfuvupperbound} and Corollary~\ref{cor:twomain bidegree bound} yield the same upper bound. When $u\in V(X)$ and $v\in V(Y)$, $U_{(X\lor Y)^c}(t)_{u,v}  = 0$, so $f_{u,v}(t) = \big|U_{X\lor Y}(t)_{u,v}\big|$. By Theorem~\ref{thm:peakstjoin1}, $f_{u,v}(t) \leq 2/\sqrt{(k-\ell)^2+4n_1n_2}$. In this case Corollary~\ref{cor:twomain bidegree bound} yields the larger upper bound $f_{u,v}(t)\leq 2/\sqrt{n_1n_2}$.
\end{ex}

\section{Nordhaus-Gaddum type inequalities}
\label{sec:NG}

Inspired by the classical Nordhaus–Gaddum inequalities involving the sum and product of a graph parameter for a graph and its complement, we will provide crude upper bounds for the expressions
\begin{equation*}
\sigma_{u,v}(t):=|U_X(t)_{u,v}|+|U_{X^c}(t)_{u,v}|\quad \text{and} \quad \rho_{u,v}(t):=|U_X(t)_{u,v}|\cdot |U_{X^c}(t)_{u,v}|
\end{equation*}
where $u$ and $v$ are two vertices in a graph $X$. 

Note that a trivial lower bound for these two expressions is 0. In what follows, we show that this trivial lower bound cannot be improved for $\rho_{u,v}(t)$ whenever $u\neq v$ or $u=v$ is not sedentary.

\begin{prop}
If $u\neq v$, then $\displaystyle\inf_{t>0}\rho_{u,v}(t)=0$. Moreover, if $u$ is not sedentary in $X$, then $\displaystyle\inf_{t>0}\rho_{u,u}(t)=0$.
\end{prop}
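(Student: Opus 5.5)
The plan is to treat the two claims separately. The second is immediate; the first reduces to a simultaneous almost-periodicity argument for $X$ and $X^c$.

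\textbf{Second claim.} Suppose $u$ is not sedentary in $X$, so $\inf_{t>0}|U_X(t)_{u,u}|=0$. Since $U_{X^c}(t)$ is unitary, $|U_{X^c}(t)_{u,u}|\leq 1$, and hence $0\leq \rho_{u,u}(t)\leq |U_X(t)_{u,u}|$ for all $t$. Taking the infimum over $t>0$ gives $\inf_{t>0}\rho_{u,u}(t)=0$.

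\textbf{First claim.} Let $u\neq v$. The idea is to find times $\tau_m\to\infty$ at which $u$ is almost periodic \emph{simultaneously} in $X$ and in $X^c$, i.e.\ $|U_X(\tau_m)_{u,u}|\to 1$ and $|U_{X^c}(\tau_m)_{u,u}|\to 1$. Unitarity then gives
\[
|U_X(\tau_m)_{u,u}|^2+|U_X(\tau_m)_{u,v}|^2\leq 1,
\]
so $|U_X(\tau_m)_{u,v}|\to 0$. Since $|U_{X^c}(\tau_m)_{u,v}|\leq 1$, it follows that $\rho_{u,v}(\tau_m)\to 0$, so $\inf_{t>0}\rho_{u,v}(t)=0$. (One of the two factors tending to zero already suffices; the other is only needed to be bounded.)

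\textbf{Main step: producing the common times.} The quoted fact \cite[Theorem 2.7.2]{Coutinho2021} gives almost periodicity in each graph separately, but possibly along different sequences. I would get common times in one of two ways.

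\begin{itemize}
\item \emph{Via the Cartesian product.} Apply almost periodicity to the vertex $(u,u)$ of $X\square X^c$. By (\ref{eq:cartprod}),
\[
|U_{X\square X^c}(t)_{(u,u),(u,u)}|=|U_X(t)_{u,u}|\cdot|U_{X^c}(t)_{u,u}|.
\]
Both factors are at most $1$, so if the product tends to $1$ along $\{\tau_m\}$, then each factor tends to $1$ along $\{\tau_m\}$.
\item \emph{Via Dirichlet.} Argue directly with Dirichlet's simultaneous approximation theorem. List all eigenvalues $\lambda_1,\dots,\lambda_r$ of $M(X)$ and $M(X^c)$ together. For every $\epsilon>0$ and every $N$, there is an integer $q\geq N$ with each $q\lambda_j/(2\pi)$ within $\epsilon$ of an integer. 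Setting $\tau=q$ makes every $e^{i\tau\lambda_j}$ close to $1$, so $U_X(\tau)$ and $U_{X^c}(\tau)$ are both close to $I$.
\end{itemize}

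The only subtlety is that $X\square X^c$ may be disconnected, since one of $X,X^c$ may be disconnected. Almost periodicity is a statement about $e^{itH}$ for an arbitrary real symmetric $H$, so connectivity plays no role. The Dirichlet route avoids the issue entirely and also makes it explicit that the times can be taken arbitrarily large, in particular positive. I would present the Dirichlet version as the main argument to avoid any doubt on this point.
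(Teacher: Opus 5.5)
Your proof is correct and essentially the paper's: the paper likewise bounds $\rho_{u,v}(t)\leq |U_X(t)_{u,v}|$ (and $\rho_{u,u}(t)\leq |U_X(t)_{u,u}|$), then uses almost periodicity of $u$ in $X$ together with unitarity; your second claim matches it verbatim. In the first claim, your ``main step'' of finding common times for $X$ and $X^c$ is unnecessary, since, as your own parenthetical notes, almost periodicity in $X$ alone suffices; still, the Dirichlet version is self-contained, avoids any connectivity concern, and gives the stronger fact $\inf_{t>0}\sigma_{u,v}(t)=0$ for $u\neq v$.
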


\begin{proof}
Since each vertex is almost periodic in $X$, $\sup_{t>0}\lvert U_X(t)_{u,u}\rvert=1$. So, by the unitarity of $U_X(t)$, $\inf_{t>0}\lvert U_X(t)_{u,v}\rvert=0$ if $u\neq v$.
If  $u$ is not sedentary in $X$, then $\inf_{t>0}\lvert U_X(t)_{u,u}\rvert=0$. Combining these observations with the fact that for all $u,v\in V(X)$, $\rho_{u,v}(t)\leq U_X(t)_{u,v}$ for all $t$ yields the above result.
\end{proof}

\begin{prop}
\label{prop:NG}
Let $X$ be a graph with $u,v\in V(X)$. Then for all $t\in\mathbb{R}$, 
\begin{equation}
\label{eq:peakst1}
\sigma_{u,v}(t)\leq B(X)_{u,v}+B(X^c)_{u,v}\quad \text{and}\quad \rho_{u,v}(t)\leq B(X)_{u,v}\cdot B(X^c)_{u,v}.
\end{equation}
Moreover, the following holds.
\begin{enumerate}
\item Let $X$ be coconnected. The following are equivalent: (i) peak state transfer occurs between $u$ and $v$ in $X$ and $X^c$ at time $\tau$, (ii) the equality holds for $\sigma_{u,v}(\tau)$ in (\ref{eq:peakst1}), and (iii) the equality holds for $\rho_{u,v}(\tau)$ in (\ref{eq:peakst1}). The following are also equivalent: (a) $u$ is periodic in $X$ and $X^c$ at time $\tau$, (b) $\sigma_{u,u}(\tau)=2$, and (c) $\rho_{u,u}(\tau)=1$. Moreover, the following are equivalent: (A) PGST occurs between $u$ and $v$ in $X$ and $X^c$ relative to the same sequence of times, (B) $\sup_{t>0}\sigma_{u,v}(t)=2$ and (C) $\sup_{t>0}\rho_{u,v}(t)=1$. Furthermore, the following are also equivalent: (I) $u$ is almost periodic in $X$ and $X^c$ relative to the same sequence of times, (II) $\sup_{t>0}\sigma_{u,u}(t)=2$ and (III) $\sup_{t>0}\rho_{u,u}(t)=1$.
\item Let $X$ be coconnected. If (i) $M=A$ and $X$ is regular or (ii) $M=L$, then $\sigma_{u,v}(t)\leq 2B(X)_{u,v}$ and $\rho_{u,v}(t)\leq (B(X)_{u,v})^2$ for all $t\in\mathbb{R}$.
\item Suppose $X^c$ is a disconnected graph on $n$ vertices where $u,v$ belong to different components in $X^c$. Then $\sigma_{u,v}(t)\leq B(X)_{u,v}$ and $\rho_{u,v}(t)=0$ for all $t\in\mathbb{R}$. In particular, we have $\sigma_{u,v}(t)\leq \frac{2}{\sqrt{\Delta}}$ if $M=A$ and $X$ is a join of two regular graphs, and $\sigma_{u,v}(t)\leq \frac{2}{n}$ if  $M=L$.
\end{enumerate}
\end{prop}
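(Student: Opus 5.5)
The two inequalities in (\ref{eq:peakst1}) come straight from (\ref{eq:bounding}). Apply it once to $X$ and once to $X^c$ to get $|U_X(t)_{u,v}|\le B(X)_{u,v}$ and $|U_{X^c}(t)_{u,v}|\le B(X^c)_{u,v}$. Adding these gives the bound on $\sigma_{u,v}$, and multiplying them (both sides are nonnegative) gives the bound on $\rho_{u,v}$.

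For part 1, the key observation is that for nonnegative $a\le A$ and $b\le B$, we have $a+b=A+B$ if and only if $a=A$ and $b=B$. The same holds for $ab=AB$, provided $A,B>0$. Coconnectedness and Proposition~\ref{prop:B+} guarantee $B(X)_{u,v},B(X^c)_{u,v}>0$, so this positivity is exactly where the hypothesis is used. Equality in either inequality at $\tau$ then holds if and only if $|U_X(\tau)_{u,v}|=B(X)_{u,v}$ and $|U_{X^c}(\tau)_{u,v}|=B(X^c)_{u,v}$, which is peak state transfer in both graphs. This proves (i)$\Leftrightarrow$(ii)$\Leftrightarrow$(iii). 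Setting $u=v$, we have $B_{u,u}=1$ in both graphs, so the statement reads $\sigma_{u,u}(\tau)=2\Leftrightarrow\rho_{u,u}(\tau)=1\Leftrightarrow$ both diagonal moduli equal $1$, which is (a)--(c).

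For the supremum versions (A)--(C) and (I)--(III), I would pass to sequences. Since every modulus is at most $1$, $\sup_t\sigma_{u,v}(t)=2$ holds if and only if there is a sequence $\tau_m$ with $|U_X(\tau_m)_{u,v}|\to1$ and $|U_{X^c}(\tau_m)_{u,v}|\to1$ simultaneously. The same is true for $\rho_{u,v}\to1$, because $ab\to1$ with $a,b\le1$ forces $a,b\to1$. This is PGST in both graphs along a common sequence. One subtlety: PGST as defined requires convergence of $U(\tau_m)_{u,v}$ to a unit $\gamma$, not just of its modulus. I expect this to be the main (minor) obstacle. I would handle it by passing to a subsequence along which $U_X(\tau_m)_{u,v}$ and $U_{X^c}(\tau_m)_{u,v}$ converge, using compactness of the unit disc; the limits then have modulus $1$. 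The diagonal case $u=v$ gives almost periodicity in the same way.

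For part 2, Proposition~\ref{prop:boundmat} gives $B(X^c)=B(X)$ under the stated hypotheses, and substituting this into (\ref{eq:peakst1}) finishes.

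For part 3, if $u,v$ lie in different components of $X^c$, then $U_{X^c}(t)$ is block diagonal with respect to the components, so $U_{X^c}(t)_{u,v}=0$ for all $t$. Hence $\rho_{u,v}\equiv0$, and $\sigma_{u,v}(t)=|U_X(t)_{u,v}|\le B(X)_{u,v}$ by (\ref{eq:bounding}). For the explicit values, $X$ is then a join $X_1\vee X_2$ with $u\in V(X_1)$ and $v\in V(X_2)$. Lemma~\ref{lem:joinbound} gives $B(X)_{u,v}=\frac{2}{\sqrt\Delta}$ when $M=A$ and the parts are regular, and $\frac{2}{n}$ when $M=L$. Alternatively, the same bound can be read off from the explicit formulas in Theorem~\ref{thm:peakstjoin1}.
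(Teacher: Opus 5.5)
Your proposal is correct and follows the paper's proof essentially step for step. You use (\ref{eq:bounding}) for the two inequalities, positivity via Proposition~\ref{prop:B+} for the equality cases, the sequence characterization of the suprema, Proposition~\ref{prop:boundmat} for part 2, and $U_{X^c}(t)_{u,v}\equiv 0$ together with Lemma~\ref{lem:joinbound} for part 3; moreover, your compactness step (passing to a common subsequence so that the entries themselves, not just their moduli, converge) fills a detail the paper leaves implicit, and your requirement that the join have $u\in V(X_1)$, $v\in V(X_2)$ with both parts regular is exactly what the $M=A$ bound $\frac{2}{\sqrt{\Delta}}$ needs (it fails if $u,v$ lie in the same regular part, e.g.\ the two vertices of $K_2$ in $K_2\vee O_2$, where $|U_X(\pi)_{u,v}|\approx 0.68>\frac{2}{\sqrt{17}}$).
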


\begin{proof}
Note that (\ref{eq:peakst1}) follows from (\ref{eq:bounding}). If $X$ is coconnected, then  $B(X)_{u,v},B(X^c)_{u,v}>0$ by Proposition~\ref{prop:B+}. Combining this with (\ref{eq:peakst1}) yields the first two statements in 1. The last two statements follow from the fact that $\sup_{t>0}\sigma_{u,v}(t)=2$ (resp., $\sup_{t>0}\rho_{u,v}(t)=1$ ) if and only if $\lim_{m\rightarrow\infty}|U_X(\tau_m)_{u,v}|=\lim_{m\rightarrow\infty}|U_{X^c}(\tau_m)_{u,v}|=1$ for some $\{\tau_m\}$. Next, combining (\ref{eq:peakst1}) and Proposition~\ref{prop:boundmat} gives us 2. Finally, the assumption in 2 implies that $B(X^c)_{u,v}=0$. So, combining this with (\ref{eq:peakst1}) and Lemma~\ref{lem:joinbound} yields 3.
\end{proof}

Note that if Proposition~\ref{prop:NG}(1i) holds and the peak state transfer is PST, then $\sigma_{u,w}(\tau)=\rho_{u,w}(\tau)=0$ for all $w\neq v$. Moreover, if Proposition~\ref{prop:NG}(1A) (resp., (1I)) holds, then $\inf_{t>0}\sigma_{u,w}(t)=\inf_{t>0}\rho_{u,w}(t)=0$ for all $w\neq v$ (resp., $w\neq u$). Next, we show that the bounds in (\ref{eq:peakst1}) in Proposition~\ref{prop:NG} are not always met.

\begin{ex}\label{ex:K2r}
Let $r=2s^2$ and $u,v$ be vertices of degree $r$ in $X=K_{2,r}$. Then $B(X)_{u,v}=B(X^c)_{u,v}=1$, $U_X(t)_{u,v}=\frac{-1+\cos(2st)}{2}$ and $U_{X^c}(t)_{u,v}=i\sin(t)$. So, $|U_X(t)_{u,v}|=\sin^2(st)$ and $|U_{X^c}(t)_{u,v}|=|\sin t|$ are periodic functions in $t$ with periods $\frac{\pi}{s}$ and $\pi$, respectively. If $s$ is odd, then $|U_X(\frac{\pi}{2})_{u,v}|=|U_{X^c}(\frac{\pi}{2})_{u,v}|=1$, and so $\sigma_{u,v}(\frac{\pi}{2})=2$ and $\rho_{u,v}(\frac{\pi}{2})=1$. If $s$ is even, then for any odd integer $q$, $|U_{X^c}(\frac{q\pi}{2})_{u,v}|=1$ and $|U_X(\frac{q\pi}{2})_{u,v}|=0$, while $|U_{X^c}(\frac{q\pi}{2s})_{u,v}|<1=|U_X(\frac{q\pi}{2s})_{u,v}|$. These calculations imply that $|U_X(t)_{u,v}|$ and $|U_{X^c}(t)_{u,v}|$ do not simultaneously attain 1, and so by Proposition~\ref{prop:NG}, the inequalities in (\ref{eq:peakst1}) are strict. Since the sum and product of $|U_X(t)_{u,v}|$ and $|U_{X^c}(t)_{u,v}|$ are periodic continuous functions, $\sigma_{u,v}(t)$ and $\rho_{u,v}(t)$ both attain their maximum, which are less than 2 and 1, respectively. For instance, if $s=2$, then $\max_{t>0}\sigma_{u,v}(t)\approx 1.736<2$ and $\max_{t>0}\rho_{u,v}(t)\approx 0.7436<1$.
\end{ex}

Combining Proposition~\ref{prop:NG}(1i) and Example~\ref{ex:2ex}, we obtain graphs that in fact attain the upper bound for $\sigma_{u,v}(t)$ and $\rho_{u,v}(t)$ in (\ref{eq:peakst1}). We also note that the graphs in Example~\ref{ex:2expgst} satisfy Proposition~\ref{prop:NG}(1B). We now complement Proposition~\ref{prop:NG}(1B) with the next result.

\begin{cor}
\label{cor:pgstsup}
If $X$ and $X^c$ both admit PST between the same pair of vertices $u$ and $v$ at times $\tau$ and $\tau'$ such that $\{1,\frac{\tau'}{\tau}\}$ is a linearly independent set over $\Q$, then $\sup_{t>0}\sigma_{u,v}(t)=2$ and $\sup_{t>0}\rho_{u,v}(t)=1$.
\end{cor}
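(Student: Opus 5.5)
The plan is to treat this as a direct application of Theorem~\ref{thm:pgstpal}, with the Cartesian product replaced by the pair $(X,X^c)$. Everything else is bookkeeping with the trivial bounds $\sigma_{u,v}(t)\leq 2$ and $\rho_{u,v}(t)\leq 1$, which hold because $U_X(t)$ and $U_{X^c}(t)$ are unitary.

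\textbf{Step 1 (odd multiples).} PST between $u$ and $v$ in $X$ at time $\tau$ implies that $|U_X(q\tau)_{u,v}|=1$ for every odd integer $q$. Indeed, $u$ and $v$ are periodic at $2\tau$, so $U_X(2\tau)\be_u=\gamma\be_u$ with $|\gamma|=1$. Hence $U_X((2j+1)\tau)\be_u=\gamma^jU_X(\tau)\be_u$, which is a unit multiple of $\be_v$. The same statement holds in $X^c$ at the odd multiples of $\tau'$.

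\textbf{Step 2 (simultaneous approximation).} Exactly as in the proof of Theorem~\ref{thm:pgstpal}, the linear independence of $\{1,\frac{\tau'}{\tau}\}$ over $\Q$ lets us apply Theorem~\ref{thm:kron} with $\alpha=\frac12(1-\frac{\tau'}{\tau})$. This gives integers $q_r,p_r$ with $|(2q_r+1)\tau'-(2p_r+1)\tau|\to 0$. Here we only need closeness on the real line, not modulo $2\pi$: the Kronecker statement $|q\lambda-p-\alpha|<\epsilon$ is an honest real inequality, and multiplying by $2\tau$ preserves this.

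\textbf{Step 3 (conclusion).} Set $t_r=(2p_r+1)\tau$. We may take $p_r\to\infty$ by choosing $q$ large positive, which Kronecker permits, so that $t_r>0$. By Step 1, $|U_X(t_r)_{u,v}|=1$. Since $t\mapsto |U_{X^c}(t)_{u,v}|$ is uniformly continuous (indeed Lipschitz, as $\|\frac{d}{dt}U_{X^c}(t)\|\le\|M(X^c)\|$), and $|U_{X^c}((2q_r+1)\tau')_{u,v}|=1$, we get $|U_{X^c}(t_r)_{u,v}|\to 1$. Therefore $\sigma_{u,v}(t_r)\to 2$ and $\rho_{u,v}(t_r)\to 1$. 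Combined with the trivial upper bounds, this yields $\sup_{t>0}\sigma_{u,v}(t)=2$ and $\sup_{t>0}\rho_{u,v}(t)=1$.

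The only delicate point is Step 2. We must make sure the approximating odd multiples are close as real numbers, so that continuity applies; the argument in Theorem~\ref{thm:pgstpal} phrases this modulo $2\pi$, which is not what we need. We must also make sure that uniform continuity, rather than mere continuity, is what transfers the value $1$ from $(2q_r+1)\tau'$ to $t_r$, since these times go to infinity.
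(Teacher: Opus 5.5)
Your proof is correct and is the paper's argument. The paper simply takes $Y=X^c$ and $(x,y)=(u,v)$ in Theorem~\ref{thm:pgstpal} to get odd integers $m_r$ with $|U_{X^c}(m_r\tau)_{u,v}|\to 1$, and then uses $|U_X(m_r\tau)_{u,v}|=1$; your inline re-derivation only tightens details left loose in that theorem's proof, namely that the Kronecker estimate is a genuine real inequality rather than one modulo $2\pi$, that the times can be taken positive, and that uniform rather than mere continuity is needed (the latter also follows because $|U_{X^c}(t)_{u,v}|$ is periodic with period $2\tau'$).
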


\begin{proof}
Taking $Y=X^c$ and $(x,y)=(u,v)$ in Theorem~\ref{thm:pgstpal}, we get $\lim_{r\rightarrow\infty}|U_Y(m_r\tau)_{u,v}|=1$ for some sequence $\{m_r\}$ of odd integers. Since $|U_X(m_r\tau)_{u,v}|=1$, the result follows.
\end{proof}

\begin{ex}
\label{ex:p3}
Let $M=A$, and $X=P_3$ with degree one vertices $u$ and $v$. Then $X$ and $X^c$ admit PST between $u$ and $v$ at times $\tau=\frac{\pi}{\sqrt{2}}$ and $\tau'=\frac{\pi}{2}$, respectively. Since $\{1,\frac{\tau'}{\tau}\}$ is a linearly independent set over $\Q$, Corollary~\ref{cor:pgstsup} yields $\sup_{t>0}\sigma_{u,v}(t)=2$ and $\sup_{t>0}\rho_{u,v}(t)=1$.
\end{ex}

We now give bounds for $\sigma_{u,v}(t)$ and $\rho_{u,v}(t)$ whenever $u$ is sedentary.

\begin{prop}
\label{prop:NGsed}
Let $M\in\{A,L\}$ and $X$ be a graph with $u,v\in V(X)$ with $u\neq v$.
\begin{enumerate}
\item If $u$ is $C_1$-sedentary in $X$, then for all $t\in\mathbb{R}$,
\begin{equation}
\label{eq:NGsed}
C_1\leq \sigma_{u,u}(t)\quad \text{and}\quad \sigma_{u,v}(t)\leq 1+\sqrt{1-C_1^2}
\end{equation}
The left (resp., right) of (\ref{eq:NGsed}) is an equality if for some $\tau\in\R$, $|U_X(\tau)_{u,u}|=C_1$ and $|U_{X^c}(\tau)_{u,u}|=0$ (resp., $|U_X(\tau)_{u,u}|=\sqrt{1-C_1^2}$ and $|U_{X^c}(\tau)_{u,u}|=1$).
\item If $u$ is $C_1$-sedentary in $X$ and $C_2$-sedentary in $X^c$, then for all $t\in\mathbb{R}$, 
\begin{equation}
\label{eq:NGsed1}
C_1+C_2\leq \sigma_{u,u}(t)\quad \text{and}\quad \sigma_{u,v}(t)\leq \sqrt{1-C_1^2}+\sqrt{1-C_2^2}\quad 
\end{equation}
and
\begin{equation}
\label{eq:NGsed2}
C_1C_2\leq \rho_{u,u}(t)\quad \text{and}\quad \rho_{u,v}(t)\leq \sqrt{(1-C_1^2)(1-C_2^2)}.
\end{equation}
The left (resp., right) of (\ref{eq:NGsed1}) and (\ref{eq:NGsed2}) become equalities if for some $\tau\in\R$, $|U_X(\tau)_{u,u}|=C_1$ and $|U_{X^c}(\tau)_{u,u}|=C_2$ (resp., $|U_X(\tau)_{u,u}|=\sqrt{1-C_1^2}$ and $|U_{X^c}(\tau)_{u,u}|=\sqrt{1-C_2^2}$).
\end{enumerate}
\end{prop}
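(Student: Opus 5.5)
The plan is a direct entrywise argument. We combine the sedentariness lower bounds with the unitarity of $U_X(t)$ and $U_{X^c}(t)$, exactly as in the proof of Proposition~\ref{prop:sedpeak}. The only inputs are $|U(t)_{u,u}|\geq C$ for a $C$-sedentary vertex (definition of sedentariness) and $|U(t)_{u,v}|\leq\sqrt{1-C^2}$ for $v\neq u$ (Proposition~\ref{prop:sedpeak}). We also use the trivial bounds $0\leq |U(t)_{u,w}|\leq 1$, which hold because each row of a unitary matrix is a unit vector.

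For part 1, suppose $u$ is $C_1$-sedentary in $X$. Then $\sigma_{u,u}(t)=|U_X(t)_{u,u}|+|U_{X^c}(t)_{u,u}|\geq C_1+0$. For $v\neq u$, Proposition~\ref{prop:sedpeak} gives $|U_X(t)_{u,v}|\leq\sqrt{1-C_1^2}$, and unitarity of $U_{X^c}(t)$ gives $|U_{X^c}(t)_{u,v}|\leq 1$. Adding these yields $\sigma_{u,v}(t)\leq 1+\sqrt{1-C_1^2}$.

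The equality clauses follow by substitution. If at some $\tau$ the two terms of $\sigma_{u,u}(\tau)$ equal $C_1$ and $0$, the left bound is attained. For the right bound, the stated condition (reading the indices as the $(u,v)$ entries, which is evidently intended) makes the two summands equal to $\sqrt{1-C_1^2}$ and $1$, so the sum is $1+\sqrt{1-C_1^2}$.

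For part 2, we apply the same reasoning to both graphs. Since $u$ is $C_1$-sedentary in $X$ and $C_2$-sedentary in $X^c$, we have $|U_X(t)_{u,u}|\geq C_1$ and $|U_{X^c}(t)_{u,u}|\geq C_2$. Adding gives $\sigma_{u,u}(t)\geq C_1+C_2$, and multiplying nonnegative quantities gives $\rho_{u,u}(t)\geq C_1C_2$. For $v\neq u$, Proposition~\ref{prop:sedpeak} applied once in $X$ and once in $X^c$ gives $|U_X(t)_{u,v}|\leq\sqrt{1-C_1^2}$ and $|U_{X^c}(t)_{u,v}|\leq\sqrt{1-C_2^2}$. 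The sum and product bounds follow, using again that all quantities are nonnegative. The equality statements are once more immediate by plugging in the hypothesized values at $\tau$.

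There is no real obstacle here. The one point needing care is the equality clauses for the upper bounds: the conditions must be read as conditions on the $(u,v)$ entries. With that reading, the sufficiency is pure substitution.
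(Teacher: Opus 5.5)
Your proof is correct and is the same argument as the paper's, which simply says the result is immediate from the definition of $C$-sedentariness and Proposition~\ref{prop:sedpeak}; your extra use of $|U_{X^c}(t)_{u,v}|\le 1$ from unitarity in part 1 is the implicit step. You are also right that the equality conditions for the upper bounds must be conditions on the $(u,v)$ entries: as printed, with $(u,u)$ entries, they are a typo.
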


\begin{proof}
This is immediate from the definition of a $C$-sedentary vertex and Proposition~\ref{prop:sedpeak}.  
\end{proof}

\begin{ex}
Let $T\geq 3$ be a twin set in $X$, which is a maximal subset of $V(X)$ whose pairwise elements share the same neighbors. Note that $T$ is also a twin set in $X^c$. Invoking \cite[Theorem 16]{Monterde2023}, each vertex in $T$ is $(1-\frac{2}{|T|})$-sedentary in $X$ and $X^c$. Taking $C_1=C_2=1-\frac{2}{|T|}$ in Proposition~\ref{prop:NGsed}(2), we get
\begin{center}
$2(1-\frac{2}{|T|})\leq \sigma_{u,u}(t),\quad$ $\sigma_{u,v}(t)\leq \frac{4\sqrt{|T|-1}}{|T|},\quad$ $(1-\frac{2}{|T|})^2\leq \rho_{u,u}(t),\quad $ and $\quad \rho_{u,v}(t)\leq \frac{4(|T|-1)}{|T|^2}$ 
\end{center}
for all $u,v\in T$ and for all $t\in\R$. For example, each part in $X=K_{r,r}$ is a twin set with $|T|=r$. In this case, we get $\sigma_{u,v}(\frac{\pi}{n})=2(1-\frac{2}{r})$ and $\rho_{u,u}(\frac{\pi}{n})=(1-\frac{2}{r})^2$, and Example~\ref{ex:knntw} yields $\sigma_{u,v}(t)=\frac{2}{r}\leq \frac{4\sqrt{r-1}}{r}$ and $\rho_{u,v}(t)\leq \frac{1}{r^2}<\frac{4(r-1)}{r^2}$. Hence, the lower bounds in Proposition~\ref{prop:NGsed}(2) are attainable.
\end{ex}

We now show that $\sigma_{u,u}(t)$ can be bounded away from 0 without $u$ being sedentary in $X$ and $X^c$.

\begin{prop}
\label{prop:sigma}
Suppose that vertex $u$ is periodic in $X$ and $X^c$ at the same time. If $|U_X(t)_{u,u}|$ and $|U_{X^c}(t)_{u,u}|$ have no common zeros, then there exists $c>0$ such that $\sigma_{u,u}(t)\geq c$ for all $t\in\R$.
\end{prop}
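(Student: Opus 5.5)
The argument is a compactness argument: periodicity makes $\sigma_{u,u}$ a continuous periodic function, so its infimum over $\R$ equals its minimum over one compact period.

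\textbf{Step 1: a common period.} By hypothesis there is $\tau>0$ with $|U_X(\tau)_{u,u}|=|U_{X^c}(\tau)_{u,u}|=1$. Since $U_X(\tau)$ is unitary, $|U_X(\tau)_{u,u}|=1$ forces $U_X(\tau)\be_u=\gamma\be_u$ for some unit $\gamma\in\C$. Then
\[
U_X(t+\tau)_{u,u}=\be_u^\top U_X(t)U_X(\tau)\be_u=\gamma\, U_X(t)_{u,u},
\]
so $|U_X(t)_{u,u}|$ is periodic in $t$ with period $\tau$. The same argument applies to $X^c$ with the same $\tau$. Hence $\sigma_{u,u}(t)=|U_X(t)_{u,u}|+|U_{X^c}(t)_{u,u}|$ satisfies $\sigma_{u,u}(t+\tau)=\sigma_{u,u}(t)$ for all $t\in\R$.

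\textbf{Step 2: continuity and positivity.} The map $t\mapsto U_X(t)=\sum_\lambda e^{it\lambda}E_\lambda$ is continuous, and likewise for $X^c$. Therefore $\sigma_{u,u}$ is continuous on $\R$. Each summand is nonnegative, so $\sigma_{u,u}(t)=0$ holds only if both $|U_X(t)_{u,u}|$ and $|U_{X^c}(t)_{u,u}|$ vanish at $t$. This is excluded by the no-common-zeros hypothesis, so $\sigma_{u,u}(t)>0$ for every $t$.

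\textbf{Step 3: compactness.} On the compact interval $[0,\tau]$, the continuous function $\sigma_{u,u}$ attains a minimum $c=\sigma_{u,u}(t_0)$. By Step 2, $c>0$. By the periodicity from Step 1, every $t\in\R$ satisfies $\sigma_{u,u}(t)=\sigma_{u,u}(t')$ for some $t'\in[0,\tau]$. Hence $\sigma_{u,u}(t)\geq c$ for all $t\in\R$.

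The one point needing care is Step 1. The periodicity of the modulus must come from $\be_u$ being an eigenvector of $U_X(\tau)$, not merely from $|U_X(\tau)_{u,u}|=1$ at a single instant. The unitarity argument above gives this. The hypothesis that both periodicities occur at the same time $\tau$ is what produces one common period for the sum. If the periods were different, one could still take a common multiple provided their ratio is rational, but the statement's "at the same time" makes that unnecessary.
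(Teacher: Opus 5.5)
Your proof is correct and follows the same route as the paper's: the no-common-zeros hypothesis gives positivity of $\sigma_{u,u}$, the common time $\tau$ gives a common period, and continuity then yields an attained positive minimum. Your Step 1 spells out, via $U_X(\tau)\mathbf{e}_u=\gamma\mathbf{e}_u$ and $U_X(t+\tau)=U_X(t)U_X(\tau)$, why periodicity of $u$ at time $\tau$ makes $|U_X(t)_{u,u}|$ a $\tau$-periodic function, a point the paper simply asserts.
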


\begin{proof}
Note that $\sigma_{u,u}(\tau)=0$ if and only if $|U_X(\tau)_{u,u}|=|U_{X^c}(\tau)_{u,u}|=0$. Thus, if $|U_X(t)_{u,u}|$ and $|U_{X^c}(t)_{u,u}|$ have no common zeros, then $\sigma_{u,u}(\tau)>0$ for all $t$. However, since $|U_X(\tau)_{u,u}|$ and $|U_{X^c}(\tau)_{u,u}|$ are both periodic functions with the same period, it follows that $\sigma_{u,u}(\tau)$ is also periodic. Since $\sigma_{u,u}(\tau)$ is continuous, these observations imply that $\sigma_{u,u}(\tau)$ attains a minimum value $c>0$.
\end{proof}

\begin{ex}
For $M=A$, let $r=2s^2$ where $s$ is even, and let $u,v$ be the vertices of degree $r$ in $X=K_{2,r}$, as in Example~\ref{ex:K2r}. One checks that $|U_X(t)_{u,u}|=\frac{1}{2}(1+\cos(2st))$ and $|U_{X^c}(t)_{u,u}|=|\cos(t)|$, which are periodic functions in $t$. Since the set of zeros of $|U_X(t)_{u,u}|$ and $|U_{X^c}(t)_{u,u}|$ are $\{\frac{p\pi}{2s}:p\ \text{is odd}\}$ and $\{\frac{q\pi}{2}:q\ \text{is odd}\}$, respectively. Since $s$ is even, $|U_X(t)_{u,u}|$ and $|U_{X^c}(t)_{u,u}|$ have no common zeros, and so by Proposition~\ref{prop:sigma}, we have $\sigma_{u,u}(t)>\beta$ for some $\beta>0$. 
%with periods $\frac{\pi}{s}$ and $\pi$, respectively. Since $u$ and $v$ are twins in $X$, we get $|U_G(t)_{u,u}| + |U_{G}(t)_{u,v}| \geq 1$ where $G\in\{X,X^c\}$ \cite[Equation 6]{kirklandMonterde2023quantum}. Consequently, $\sigma_{u,u} (t) + \sigma_{u,v}(t) \geq 2$. From Example~\ref{ex:K2r}, $\sigma_{u,v}(t)$ attains a maximum less than $2$, and so $\sigma_{u,u} (t) \geq 2 - \sigma_{u,v}(t) \geq 2 - \max_{t>0} \sigma_{u,v}(t) > 0$. Since $u$ is involved in PST in $X$ and $X^c$, we get that $u$ is not sedentary in both $X$ and $X^c$, and yet $\sigma_{u,u} (t)=\left|\frac{1}{2}\left(1+\cos\left(2st\right)\right)\right|+\left|\cos t\right|> 0$. 
For instance, if $s=2$, then $\min_{t>0}\sigma_{u,u} (t)\approx 0.672$.
\end{ex}

\section{Split graphs}
\label{sec:split}

A \emph{split graph} is a graph whose vertex set can be partitioned into $C\cup S$ where the subgraph induced by $C$ forms a clique while the subgraph induced by $S$ forms an independent set. %The family of  split graphs is closed under complementation, i.e., the complement of a split graph is still a split graph. %This makes split graphs of interest particularly with regards to quantum walks in graph complements. 
A split graph is \emph{bidegreed} if it has exactly two valencies, which we denote by $\alpha$ and $\beta$. If a split graph is bidegreed, then all vertices in $C$ have the same valency. Similarly, for all vertices in $S$. We define an $(\alpha,\beta)$\emph{-bidegreed} split graph as a bidegreed split graph whose vertices in $C$ have valency $\alpha$ and vertices in $S$ have valency $\beta$. Examples of bidegreed split graphs include the family of stars, double stars, the corona of a complete graph with an empty graph, and the join of a complete graph and an empty graph, see Fig.~\ref{fig2}.
%\linebreak\vspace{-10pt}

In this section, we compute the spectral decomposition of bidegreed split graphs and identify subfamilies that admit sedentariness in the graph and its complement. These families are interesting because they are nonregular coconnected graphs that in general do not contain twins, and are in general neither joins nor Cartesian products, unlike most known examples of graphs with sedentary vertices in the literature.

\begin{figure}
	\begin{center}
		\begin{tikzpicture}[scale=.37,auto=left]
		\tikzstyle{every node}=[circle, thick, fill=white, scale=0.7]
		
		%label={above, yshift=0cm: {\Large$u$}}
		
		\node[draw,fill=cyan] (1) at (-1,1.64) {};
		\node[draw,fill=cyan] (2) at (1,1.64)   {};
		\node[draw,fill=cyan] (3) at (2,0)  {};
		\node[draw,fill=cyan] (4) at (1,-1.64)  {};
		\node[draw,fill=cyan] (5) at (-1,-1.64)  {};
		\node[draw,fill=cyan] (6) at (-2,0)  {};
		\node[draw] (0) at (0,0)  {};
		\node (a) at (0,-4)  {$K_{1,6}$};
		
		\draw [thick, black!70] (0)--(1);
		\draw [thick, black!70] (0)--(2);
		\draw [thick, black!70] (0)--(3);
		\draw [thick, black!70] (0)--(4);
		\draw [thick, black!70] (0)--(5);
		\draw [thick, black!70] (0)--(6);
		
		\node[draw,fill=cyan] (7) at (10.64,1) {};
		\node[draw,fill=cyan] (8) at (8.64,1)   {};
		\node[draw,fill=cyan] (9) at (8.64,-1)  {};
		\node[draw,fill=cyan] (10) at (10.64,-1)  {};
		\node[draw,fill=cyan] (11) at (12.28,2.64) {};
		\node[draw,fill=cyan] (12) at (7,2.64)   {};
		\node[draw,fill=cyan] (13) at (7,-2.64)  {};
		\node[draw,fill=cyan] (14) at (12.28,-2.64)  {};
		\node (b) at (9.64,-4)  {$K_4\circ K_1$};
		
		\draw [thick, black!70] (7)--(8)--(9)--(10)--(7);
		\draw [thick, black!70] (7)--(9);
		\draw [thick, black!70] (8)--(10);
		\draw [thick, black!70] (7)--(11);
		\draw [thick, black!70] (8)--(12);
		\draw [thick, black!70] (9)--(13);
		\draw [thick, black!70] (10)--(14);

		\node[draw,fill=cyan] (7) at (20.92,1) {};
		\node[draw,fill=cyan] (8) at (18.92,1)   {};
		\node[draw,fill=cyan] (9) at (18.92,-1)  {};
		\node[draw,fill=cyan] (10) at (20.92,-1)  {};
		\node[draw,fill=cyan] (11) at (22.56,2.64) {};
		\node[draw,fill=cyan] (12) at (17.28,2.64)   {};
		\node[draw,fill=cyan] (13) at (17.28,-2.64)  {};
		\node[draw,fill=cyan] (14) at (22.56,-2.64)  {};
		\node (c) at (19.92,-4)  {$(K_4\circ K_1)^c$};
		
		\draw [thick, black!70] (7)--(8)--(9)--(10)--(7);
		\draw [thick, black!70] (7)--(9);
		\draw [thick, black!70] (8)--(10);
		\draw [thick, black!70] (7)--(11);
		\draw [thick, black!70] (8)--(11);
		\draw [thick, black!70] (10)--(11);
		\draw [thick, black!70] (8)--(12);
		\draw [thick, black!70] (7)--(12);
		\draw [thick, black!70] (9)--(12);
		\draw [thick, black!70] (9)--(13);
		\draw [thick, black!70] (8)--(13);
		\draw [thick, black!70] (10)--(13);
		\draw [thick, black!70] (10)--(14);
		\draw [thick, black!70] (7)--(14);
		\draw [thick, black!70] (9)--(14);
		
		\node (d) at (32.71,-4)  {$X$};
\node[draw,fill=cyan] (1) at (34.71,0.83) {};
\node[draw,fill=cyan] (2) at (33.54,2)   {};
\node[draw,fill=cyan] (3) at (31.88,2)  {};
\node[draw,fill=cyan] (4) at (30.71,0.83)  {};
\node[draw,fill=cyan] (5) at (30.71,-0.83)  {};
\node[draw,fill=cyan] (6) at (31.88,-2)  {};
\node[draw,fill=cyan] (7) at (33.54,-2)  {};
\node[draw,fill=cyan] (8) at (34.71,-0.83)  {};
		\draw [thick, black!70] (1)--(2)--(3)--(4)--(5)--(6)--(7)--(8)--(1);
		\draw [thick, black!70] (1)--(3)--(5)--(7)--(1);
		\draw [thick, black!70] (2)--(4)--(6)--(8)--(2);
		\draw [thick, black!70] (1)--(4)--(7)--(2)--(5)--(8)--(3)--(6)--(1);
		\draw [thick, black!70] (1)--(5)--(1);
		\draw [thick, black!70] (2)--(6)--(2);
		\draw [thick, black!70] (3)--(7)--(3);
		\draw [thick, black!70] (4)--(8)--(4);
%%circle of radius 4
%\node[draw,fill=cyan] (9) at (38.88,1.17) {};
%\node[draw,fill=cyan] (10) at (37.99,3.33)   {};
%\node[draw,fill=cyan] (11) at (28.01,3.33)  {};
%\node[draw,fill=cyan] (12) at (27.12,1.17)  {};
%\node[draw,fill=cyan] (13) at (27.12,-1.17)  {};
%\node[draw,fill=cyan] (14) at (28.01,-3.33)  {};
%\node[draw,fill=cyan] (15) at (37.99,-3.33)  {};
%\node[draw,fill=cyan] (16) at (38.88,-1.17)  {};
%circle of radius 3.5
\node[draw,fill=cyan] (9) at (37.86,1.02) {};
\node[draw,fill=cyan] (10) at (37.08,2.91)   {};
\node[draw,fill=cyan] (11) at (28.34,2.91)  {};
\node[draw,fill=cyan] (12) at (27.56,1.02)  {};
\node[draw,fill=cyan] (13) at (27.56,-1.02)  {};
\node[draw,fill=cyan] (14) at (28.34,-2.91)  {};
\node[draw,fill=cyan] (15) at (37.08,-2.91)  {};
\node[draw,fill=cyan] (16) at (37.86,-1.02)  {};
\draw [thick, black!70] (1)--(10);
\draw [thick, black!70] (1)--(9);
\draw [thick, black!70] (1)--(15);
\draw [thick, black!70] (2)--(10);
\draw [thick, black!70] (2)--(16);
\draw [thick, black!70] (2)--(9);
\draw [thick, black!70] (3)--(11);
\draw [thick, black!70] (3)--(12);
\draw [thick, black!70] (3)--(13);
\draw [thick, black!70] (4)--(11);
\draw [thick, black!70] (4)--(12);
\draw [thick, black!70] (4)--(14);
\draw [thick, black!70] (5)--(11);
\draw [thick, black!70] (5)--(13);
\draw [thick, black!70] (5)--(14);
\draw [thick, black!70] (6)--(12);
\draw [thick, black!70] (6)--(13);
\draw [thick, black!70] (6)--(14);
\draw [thick, black!70] (7)--(9);
\draw [thick, black!70] (7)--(16);
\draw [thick, black!70] (7)--(15);
\draw [thick, black!70] (8)--(10);
\draw [thick, black!70] (8)--(15);
\draw [thick, black!70] (8)--(16);
		\end{tikzpicture}
		\vspace{-20pt}
	\end{center}	
	\caption{\label{fig2} Bidegreed split graphs with sedentary vertices marked blue}
\end{figure}

\subsection{Spectral decomposition}

The spectra of $(\alpha,\beta)$-bidegreed split graphs have been computed by Goldberg et al \cite[Propositions 4.2-4.4]{goldberg2020onsplit}. We follow this result to obtain the spectral decomposition of $(\alpha,\beta)$-bidegreed split graphs.

\begin{theorem}\label{thmspecdecombsg}
	Let $X$ be an $(\alpha,\beta)$-bidegreed split graph, where $C$ is the induced clique and $S$ is the induced independent set. Index the adjacency matrix so that $A(X)=\begin{bmatrix}
	\mathbf{J}_m-I_m & B \\ B^\top & O_n
	\end{bmatrix}$ where $m=|C|$, $n=|S|$,  $B\mathbf{1}_n=(\alpha-m+1)\mathbf{1}_m=k\mathbf{1}_m$, and $B^\top\mathbf{1}_m=\beta\mathbf{1_n}$. Then the spectral decomposition of $A(X)$ is given by
	\begin{equation*}
	A(X) =\lambda_1E_{\lambda_1}+\lambda_2E_{\lambda_2} + \displaystyle\sum_{\stackrel{\gamma\in \nspec(BB^\top)}{\lambda^2+\lambda-\gamma=0}} \lambda E_{\lambda,\gamma}
	\end{equation*}
	where $\lambda_i$, $i\in\{1,2\}$, satisfies $\lambda_i^2-(m-1)\lambda_i-\beta k=0$ and \begin{equation*}
	E_{\lambda_i}=\dfrac{1}{m\lambda_i^2+n\beta^2}\begin{bmatrix}
	\lambda_i^2 \mathbf{J} & \lambda_i \beta \mathbf{J} \\
	\lambda_i \beta \mathbf{J} & \beta^2\mathbf{J}
	\end{bmatrix},
	\end{equation*} and for $\gamma\neq0$,
	\begin{equation*}
	E_{\lambda,\gamma}=
		\displaystyle\sum_{\substack{\mathbf{1}^\top \mathbf{x}=\mathbf{0}, BB^\top \mathbf{x}=\gamma \mathbf{x}}}\dfrac{1}{\lambda^2+\gamma}\begin{bmatrix}
		\lambda^2 \mathbf{x}\mathbf{x}^\top &\lambda \mathbf{x}\mathbf{x}^\top B\\
		\lambda B^\top \mathbf{x}\mathbf{x}^\top & B^\top \mathbf{x}\mathbf{x}^\top B
		\end{bmatrix}=\sum_{\mathbf{1}^\top \mathbf{y}=\mathbf{0}, B^\top B\mathbf{y}=\gamma \mathbf{y}}\frac{\gamma}{\lambda^2+\gamma}\begin{bmatrix}
		\frac{\lambda^2}{\gamma^2}B\mathbf{y}\mathbf{y}^\top B & \frac{\lambda}{\gamma}B\mathbf{y}\mathbf{y}^\top \\ \frac{\lambda}{\gamma}\mathbf{y}\mathbf{y}^\top B & \mathbf{y}\mathbf{y}^\top
		\end{bmatrix}.
	\end{equation*}
	Moreover if $\gamma=0$ and $\lambda\in\{-1,0\}$, then
\begin{equation*}
E_{-1,0}=\displaystyle\sum_{\mathbf{1}^\top \mathbf{x}=BB^\top \mathbf{x}=\mathbf{0}}\begin{bmatrix}
\mathbf{x}\mathbf{x}^\top & O \\ O & O\end{bmatrix}\qquad\mbox{and}\qquad
E_{0,0}=\displaystyle\sum_{\mathbf{1}^\top \mathbf{y}=B^\top B\mathbf{y}=\mathbf{0}}\begin{bmatrix}
O & O \\ O & \mathbf{y}\mathbf{y}^\top\end{bmatrix}.
\end{equation*} 
\end{theorem}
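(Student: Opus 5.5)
The plan is to construct an orthonormal eigenbasis of $A(X)$ in three families matching the three types of terms, check that they span $\R^{m+n}$, and then sum $\bw\bw^\top/\|\bw\|^2$ within each eigenvalue. The basic computation uses the block form. For $\bw=\begin{bmatrix}\bx\\ \by\end{bmatrix}$, the equation $A(X)\bw=\lambda\bw$ reads
\begin{center}
$(\mathbf{J}-I)\bx+B\by=\lambda\bx\quad$ and $\quad B^\top\bx=\lambda\by$.
\end{center}

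\emph{Main part.} Try $\bx=a\one_m$ and $\by=b\one_n$. Using $B\one_n=k\one_m$ and $B^\top\one_m=\beta\one_n$, the equations become $(m-1)a+kb=\lambda a$ and $\beta a=\lambda b$. Taking $a=\lambda$ and $b=\beta$ forces $\lambda^2-(m-1)\lambda-\beta k=0$, which has two distinct real roots $\lambda_1,\lambda_2$. The vector $[\lambda_i\one;\beta\one]$ has squared norm $m\lambda_i^2+n\beta^2$, which gives the stated $E_{\lambda_i}$. These two vectors span the space $W=\{[a\one;b\one]\}$, which is $A$-invariant. Hence $W^\perp=\{\one_m^\top\bx=0,\ \one_n^\top\by=0\}$ is also invariant.

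\emph{Non-main part.} On $W^\perp$, $\mathbf{J}\bx=0$, so the first equation becomes $-\bx+B\by=\lambda\bx$. Take $\bx$ with $\one^\top\bx=0$ and $BB^\top\bx=\gamma\bx$, and set $\bw=[\lambda\bx;B^\top\bx]$. Then the second equation holds trivially. The first becomes $-\lambda\bx+\gamma\bx=\lambda^2\bx$, i.e. $\lambda^2+\lambda-\gamma=0$. We also need $\one_n^\top B^\top\bx=\beta\one^\top\bx=0$, so $\bw\in W^\perp$.

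The squared norm is $\lambda^2\|\bx\|^2+\bx^\top BB^\top\bx=(\lambda^2+\gamma)$ for unit $\bx$. This yields the first formula for $E_{\lambda,\gamma}$, summed over an orthonormal basis $\bx$ of $\ker(BB^\top-\gamma I)\cap\one^\perp$. Each $\gamma\neq0$ gives two distinct roots $\lambda$. The two resulting vectors per $\bx$ are orthogonal, since their inner product is $\lambda\lambda'+\gamma=-\gamma+\gamma=0$ by Vieta.

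For the second formula, use the singular-value correspondence $\by=B^\top\bx/\sqrt\gamma$, equivalently $\bx=B\by/\sqrt\gamma$. Since $B\one_n=k\one_m$, the condition $\one^\top\by=0$ corresponds to $\one^\top\bx=0$. Substituting into the first formula gives the second one.

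\emph{The case $\gamma=0$.} If $\bx\perp\one$ and $B^\top\bx=0$, then $[\bx;\zero]$ is an eigenvector for $-1$. If $\by\perp\one$ and $B\by=0$, then $[\zero;\by]$ is an eigenvector for $0$. These give $E_{-1,0}$ and $E_{0,0}$.

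\emph{Completeness.} This step is the main obstacle, together with the bookkeeping of coinciding eigenvalues. The count is as follows. The space $\one_m^\perp$ has dimension $m-1$ and splits orthogonally into $BB^\top$-eigenspaces; restricting to it is valid because $BB^\top\one=k\beta\one$, so $\one^\perp$ is invariant. Each $\gamma\neq0$ direction contributes 2 vectors and each $\gamma=0$ direction contributes 1. The remaining $\by$-directions are those in $\one_n^\perp\cap\ker B$. Since $B^\top B$ restricted to $\one^\perp$ has the same nonzero eigenvalues with the same multiplicities, the total is
\[
2+2r+\bigl((m-1)-r\bigr)+\bigl((n-1)-r\bigr)=m+n,
\]
where $r$ is the number of nonzero $\gamma$ counted with multiplicity.

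Mutual orthogonality across different $\lambda$ is automatic by symmetry of $A$. Within a coincident eigenvalue, the vectors are orthogonal because the underlying $\bx$'s, or the $\by$-blocks, are orthogonal. Finally, if some $\lambda$ from a non-main $\gamma$ equals $\lambda_i$, the idempotent is the sum of the listed pieces. I would state this caveat, and interpret $E_{\lambda,\gamma}$ as the component attached to that $\gamma$.
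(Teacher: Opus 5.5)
Your proposal is correct, but it runs in the opposite direction from the paper's proof. The paper starts from an arbitrary eigenvector $[\mathbf{x};\mathbf{y}]$ and takes the list of possible eigenvalues from Goldberg et al. It eliminates $\mathbf{y}$ to get $(\lambda^2+\lambda)\mathbf{x}-\lambda\mathbf{J}\mathbf{x}-BB^\top\mathbf{x}=\mathbf{0}$. It then splits on whether $\mathbf{1}^\top\mathbf{x}\neq 0$, asserting that $\mathbf{x}$ is then a multiple of $\mathbf{1}$ (the reducible case is left to ``a little more work''), or $\mathbf{1}^\top\mathbf{x}=0$.

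You instead decompose $\mathbb{R}^{m+n}=W\oplus W^\perp$, with $W$ the $A$-invariant space of blockwise-constant vectors. You write down eigenvectors in each piece and check orthogonality, using Vieta for the two vectors built from one $\mathbf{x}$. Completeness then follows from the count $2+2r+(m-1-r)+(n-1-r)=m+n$. The count rests on the isometry $\mathbf{x}\mapsto B^\top\mathbf{x}/\sqrt{\gamma}$ between the $\gamma$-eigenspaces of $BB^\top$ and $B^\top B$ inside $\mathbf{1}^\perp$.

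What your route buys:
\begin{itemize}
\item It is self-contained and needs no Goldberg et al.\ input.
\item It handles reducible $BB^\top$ with no extra work. This requires reading $\nspec(BB^\top)$ as the spectrum of $BB^\top$ on $\mathbf{1}^\perp$, which matches how the paper uses it later.
\item It makes clear that $E_{0,0}$ is governed by $\ker B\cap\mathbf{1}_n^\perp$.
\end{itemize}

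Your caveat about coincident eigenvalues is genuinely needed. Take $m=3$ and $B$ the $3\times 6$ incidence matrix of the edges of $K_3$, each edge taken twice. Then $BB^\top=2\mathbf{J}+2I$, and $-2$ is both $\lambda_2$ and a root of $\lambda^2+\lambda-2=0$. So the true idempotent for $-2$ is $E_{\lambda_2}+E_{-2,2}$, not $E_{\lambda_2}$. The paper's claim that $\mathbf{1}^\top\mathbf{x}\neq 0$ forces $\mathbf{x}\in\operatorname{span}\{\mathbf{1}\}$ also fails in this example.

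Three small corrections:
\begin{itemize}
\item You should have $\mathbf{1}_n^\top B^\top\mathbf{x}=k\,\mathbf{1}_m^\top\mathbf{x}$ (since $B\mathbf{1}_n=k\mathbf{1}_m$), not $\beta\,\mathbf{1}^\top\mathbf{x}$.
\item Your substitution actually yields $B\mathbf{y}\mathbf{y}^\top B^\top$ and $\mathbf{y}\mathbf{y}^\top B^\top$ in the second formula. The corresponding $B$'s in the statement must therefore be read with a transpose.
\item Spanning $W$ by the vectors $[\lambda_i\mathbf{1};\beta\mathbf{1}]$ uses $\beta k>0$. This holds under the paper's standing connectivity assumption.
\end{itemize}
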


\begin{proof}
By Proposition 4.2 in \cite{goldberg2020onsplit}, the eigenvalues $\lambda$ of $A(X)$ satisfy one of the following: (i) $\lambda=0$, (ii) $\lambda^2-(m-1)\lambda-\beta k=0$, or (iii) $\lambda^2+\lambda-\gamma=0$ for some $\gamma\in\nspec(BB^\top)$. From the eigenequation $\begin{bmatrix}
\mathbf{J}_m-I_m & B \\ B^\top & {O}_n
\end{bmatrix}\begin{bmatrix}
\mathbf{x} \\ \mathbf{y}
\end{bmatrix}=\lambda\begin{bmatrix}
\mathbf{x} \\ \mathbf{y}
\end{bmatrix}$, we get the following system of equations:
\begin{equation}\label{eqeigen1}
(\lambda+1)\mathbf{x}-\mathbf{J}_m\mathbf{x}-B\mathbf{y}=\mathbf{0}\quad\text{and}\quad  
-B^\top \mathbf{x} +\lambda \mathbf{y}=\mathbf{0},
\end{equation}
which together simplify to \begin{equation}\label{eqeigen2}
(\lambda^2+\lambda)\mathbf{x}-\lambda \mathbf{J}_m\mathbf{x}-BB^\top \mathbf{x}=\mathbf{0}
\end{equation} when $\lambda\neq0$. If $\mathbf{x}=\mathbf{0}$, the above system implies $\lambda=0$. So, assume first that $\mathbf{x}\neq \mathbf{0}$ and $\mathbf{1}^\top \mathbf{x}\neq \mathbf{0}$. Pre-multiplying by $\mathbf{J}_m$, and then using the fact that $BB^\top\mathbf{1}_m=\beta k\mathbf{1}_m$ gives us
\begin{equation*}\label{eqeigen3}
(\lambda^2-(m-1)\lambda-\beta k)\mathbf{J}_m\mathbf{x}=\mathbf{0}.
\end{equation*} 
Thus, an eigenvector $\bx$ satisfying $\mathbf{1}^\top \mathbf{x}\neq \mathbf{0}$ corresponds to the eigenvalue satisfying $\lambda^2-(m-1)\lambda-\beta k=0$. Observe also from \eqref{eqeigen2} that $\mathbf{x}$ must be an eigenvector of $\lambda \mathbf{J}_m+BB^\top$ and hence is a scalar multiple of $\mathbf{1}_m$ because $BB^\top$ is $\beta k$-stochastic and $\mathbf{1}^\top \mathbf{x}\neq \mathbf{0}$. (This is clear when $BB^\top$ is irreducible but can also be shown for the reducible case with a little more work.) It follows that an eigenvector of $A(X)$ corresponding to such an eigenvalue $\lambda$ is (a scalar multiple of) $\begin{bmatrix}
\lambda \mathbf{1} \\ \beta \mathbf{1}
\end{bmatrix}$ and the spectral idempotent is given by $\dfrac{1}{m\lambda^2+n\beta^2}\begin{bmatrix}
	\lambda^2 \mathbf{J} & \lambda \beta \mathbf{J} \\
	\lambda \beta \mathbf{J} & \beta^2\mathbf{J}
\end{bmatrix}$. Now, assuming that $\mathbf{x}\neq \mathbf{0}$ but $\mathbf{1}^\top \mathbf{x}=\mathbf{0}$, \eqref{eqeigen2} simplifies to $BB^\top \mathbf{x} = (\lambda^2+\lambda)\mathbf{x}$. Then, $\mathbf{x}$ must be an eigenvector of $BB^\top$ corresponding to $\gamma\in\nspec(BB^\top)$ and $\begin{bmatrix}
\lambda \mathbf{x} \\ B^\top \mathbf{x}
\end{bmatrix}$ is an eigenvector of $A(X)$ corresponding to $\lambda$ satisfying $\lambda^2+\lambda-\gamma=0$. Furthermore, the spectral idempotent for $\gamma\neq0$ is $E_{\lambda,\gamma}=
\displaystyle\sum_{\substack{\mathbf{1}^\top \mathbf{x}=\mathbf{0}, BB^\top \mathbf{x}=\gamma \mathbf{x}}}\dfrac{1}{\lambda^2+\gamma}\begin{bmatrix}
\lambda^2 \mathbf{x}\mathbf{x}^\top &\lambda \mathbf{x}\mathbf{x}^\top B\\
\lambda B^\top \mathbf{x}\mathbf{x}^\top & B^\top \mathbf{x}\mathbf{x}^\top B
\end{bmatrix}$. Taking $\mathbf{y}=B^\top \mathbf{x}$, the eigenvectors are of the form $\begin{bmatrix}
\frac{\lambda}{\gamma}B\mathbf{y} \\ \lambda \mathbf{y}
\end{bmatrix}$, and $E_{\lambda,\gamma}$ can be expressed equivalently as $\displaystyle\sum_{\mathbf{1}^\top \mathbf{y}=\mathbf{0}, B^\top B\mathbf{y}=\gamma \mathbf{y}}\frac{\gamma}{\lambda^2+\gamma}\begin{bmatrix}
\frac{\lambda^2}{\gamma^2}B\mathbf{y}\mathbf{y}^\top B & \frac{\lambda}{\gamma}B\mathbf{y}\mathbf{y}^\top \\ \frac{\lambda}{\gamma}\mathbf{y}\mathbf{y}^\top B & \mathbf{y}\mathbf{y}^\top
\end{bmatrix}$.

If $\gamma=0$, then $\lambda=-1$ or $\lambda=0$. For the case $\lambda=-1$, the eigenequation \eqref{eqeigen1} results in $B\mathbf{y} =\mathbf{0}$ and\linebreak $ B^\top \mathbf{x} + \mathbf{y}=\mathbf{0}$,
which implies that $BB^\top \mathbf{x} =\mathbf{0}$ and so $\mathbf{y}=-B^\top \mathbf{x}=\mathbf{0}$. Thus, $E_{-1,0}=\displaystyle\sum_{\mathbf{1}^\top \mathbf{x}=BB^\top \mathbf{x}=\mathbf{0}}\begin{bmatrix}
\mathbf{x}\mathbf{x}^\top & O \\ O & O\end{bmatrix}$. Similarly, when $\lambda=0$, the eigenequation \eqref{eqeigen1} simplifies to $B\mathbf{y} =\mathbf{x}$ and $ B^\top \mathbf{x}=\mathbf{0}$,
so $B^\top B \mathbf{y}=\mathbf{0}$ and $B\mathbf{y}=\mathbf{0}$. Thus,
$E_{0,0}=\displaystyle\sum_{\mathbf{1}^\top \mathbf{y}=B^\top B\mathbf{y}=\mathbf{0}}\begin{bmatrix}
O & O \\ O & \mathbf{y}\mathbf{y}^\top\end{bmatrix}.$
\end{proof}
\begin{rem}\label{rembsgsedentary}
	The complement of an $(\alpha,\beta)$-bidegreed split graph $X$ is an $(m+n-1-\beta,m+n-1-\alpha)$-bidegreed split graph whose adjacency matrix is $A(X^c)=\begin{bmatrix}
	O & \mathbf{1}_m\mathbf{1}_n^\top-B \\ \mathbf{1}_n\mathbf{1}_m^\top-B^\top & \mathbf{J}_n-I_n
	\end{bmatrix}$. Theorem~\ref{thmspecdecombsg} can then be used to find its spectral decomposition. Moreover, since $BB^\top$ is $\beta k$-stochastic, we can deduce the eigenvalues of  $(\mathbf{1}_m\mathbf{1}_n^\top-B)(\mathbf{1}_m\mathbf{1}_n^\top-B)^\top$ from that of $BB^\top$. Observe that $E_{\beta k}=\frac{1}{m}\mathbf{J}_m$ and a simple calculation yields $(\mathbf{1}_m\mathbf{1}_n^\top-B)(\mathbf{1}_m\mathbf{1}_n^\top-B)^\top
	=n\mathbf{1}_m\mathbf{1}_m^\top-\mathbf{1}_m\mathbf{1}_n^\top B^\top -B\mathbf{1}_n\mathbf{1}_m^\top +BB^\top
	=(n-2k)\mathbf{J}_m+BB^\top.$
	If $BB^\top$ is irreducible, then  $\spec((n-2k)\mathbf{J}_m+BB^\top)=\{m(n-2k)+\beta k\}\cup\spec(BB^\top)\backslash\{\beta k\}$. Otherwise, $\spec((n-2k)\mathbf{J}_m+BB^\top)=\{m(n-2k)+\beta k\}\cup\spec(BB^\top)$.
	Thus $\nspec((\mathbf{1}_m\mathbf{1}_n^\top-B)(\mathbf{1}_m\mathbf{1}_n^\top-B)^\top)=\nspec(BB^\top)$. Similarly, $\nspec((\mathbf{1}_m\mathbf{1}_n^\top-B)^\top(\mathbf{1}_m\mathbf{1}_n^\top-B))=\nspec(B^\top B).$
\end{rem}

%With the spectral decomposition of bidegreed split graphs and their complements now understood, 
We now examine $f_{u,v}(t)$ for the family of bidegreed split graphs.

\begin{ex}
    Let $X$ be a connected $(\alpha,\beta)$-bidegreed split graph where $|C|=n_1$ and $|S|=n_2$. Furthermore, suppose that $X$ is not the join of two regular graphs. The main eigenvalues of $A(X)$ are $\lambda_1,\lambda_2$ satisfying $\lambda_i^2-(n_1-1)\lambda_i-\beta k=0$. Note that $\lambda_1\neq \lambda_2$, so by Proposition~\ref{prop:2mainevec} $A(X)$ has exactly two linearly independent main eigenvectors. The graph $X$ then meets the hypotheses of Corollary~\ref{cor:twomain bidegree bound}, and so
    \[f_{u,v}(t)\leq \begin{cases}
        2/n_1, &u,v\in C\\
        2/n_2, &u,v\in S\\
        2/\sqrt{n_1n_2}, &\text{otherwise}.
    \end{cases}\]
    To compute $f_{u,v}(t)$ exactly, we use the spectral decomposition of $A(X)$ and $A(X^c)$. As stated in the proof of Theorem~\ref{thmspecdecombsg}, the associated eigenvectors of $\lambda_1,\lambda_2$ are $\begin{bmatrix}
        \lambda_i\one \\
        \beta \one 
    \end{bmatrix}.$ The graph $X^c$ is disconnected only if $X$ is the join of two regular graphs, and therefore $X^c$ is a connected bidegreed split graph. Then $A(X^c)$ has two linearly independent main eigenvectors $\begin{bmatrix}
        \lambda_i^c\one \\
        \beta^c \one 
    \end{bmatrix}$ where $\beta^c = (n_1+n_2-1-\alpha)$ and $\lambda_1^c,\lambda_2^c$ satisfy $(\lambda_i^c)^2-(n_2-1)\lambda_i^c-\beta^c(n_1-\beta)=0$. By Lemma~\ref{lem: transition_difference},
    \[U_{X^c}(t) - e^{-it}U_X(-t) = e^{it\lambda_1^c}E_{\lambda_1^c}+e^{it\lambda_2^c}E_{\lambda_2^c}-e^{-it(\lambda_1+1)}E_{\lambda_1} - e^{-it(\lambda_2+1)}E_{\lambda_2}\]
    where 
    \[E_{\lambda_i^c} = \frac{1}{n_2(\lambda_i^c)^2 + n_1(\beta^c)^2}\begin{bmatrix}
        (\beta^c)^2\mathbf{J} & \lambda_i^c\beta^c\mathbf{J}\\
        \lambda_i^c\beta^c\mathbf{J} & (\lambda_i^c)^2\mathbf{J}\\
    \end{bmatrix}.\]
    We thus obtain 
    \[f_{u,v}(t) =  \begin{cases}
        \left|\frac{e^{it\lambda_1^c}}{n_1+n_2(\lambda_1^c/\beta^c)^2} 
        + \frac{e^{it\lambda_2^c}}{n_1+n_2(\lambda_2^c/\beta^c)^2}
        -\frac{e^{-it(\lambda_1+1)}}{n_1+n_2(\beta/\lambda_1)^2}
        -\frac{e^{-it(\lambda_2+1)}}{n_1+n_2(\beta/\lambda_2)^2}\right|,& u,v \in C\\
        
        \left|\frac{e^{it\lambda_1^c}}{n_2+n_1(\lambda_1^c/\beta^c)^2} 
        + \frac{e^{it\lambda_2^c}}{n_2+n_1(\lambda_2^c/\beta^c)^2}
        -\frac{e^{-it(\lambda_1+1)}}{n_2+n_1(\beta/\lambda_1)^2}
        -\frac{e^{-it(\lambda_2+1)}}{n_2+n_1(\beta/\lambda_2)^2}\right|,& u,v \in C\\
        
        \left|\frac{e^{it\lambda_1^c}}{n_2\lambda_1^c/\beta^c + n_1\beta^c/\lambda_1^c} + \frac{e^{it\lambda_2^c}}{n_2\lambda_2^c/\beta^c + n_1\beta^c/\lambda_2^c}-\frac{e^{-it(\lambda_1+1)}}{n_1\lambda_1/\beta+n_2\beta/\lambda_1}-\frac{e^{-it(\lambda_2+1)}}{n_1\lambda_2/\beta+n_2\beta/\lambda_2}\right|,&\text{otherwise}.
    \end{cases}\]  
%This explicit calculation of $f_{u,v}(t)$ is rather cumbersome. However, i
If one applies the triangle inequality and some natural upper bounds on the summands of $f_{u,v}(t)$, the upper bounds obtained for $f_{u,v}(t)$ above are twice those found using Corollary~\ref{cor:twomain bidegree bound}.
\end{ex}

\subsection{Periodicity}

Let $X$ be a bidegreed split graph. Using the form of the eigenvalues $\lambda_1,\lambda_2$ and the $\lambda$s of $X$, we deduce that the eigenvalue support of each vertex $u$ of $X$ is given by
\begin{center}
$\supp_{u}= 
\begin{cases}
    \bigg\{\frac{m-1\pm\sqrt{(m-1)^2+4\beta k}}{2}\bigg\}\bigcup\left\{\frac{-1\pm\sqrt{1+4\gamma}}{2}:\gamma\in \nspec(BB^\top)\right\}, & \text{if } \gamma\neq 0 \\
    \{-1\}\bigcup\left\{\frac{m-1\pm\sqrt{(m-1)^2+4\beta k}}{2}\right\}\bigcup\left\{\frac{-1\pm\sqrt{1+4\gamma}}{2}:\gamma\in \nspec(BB^\top)\right\},  & \text{if $\gamma=0$ and $u\in C$} \\
    \{0\}\bigcup\left\{\frac{m-1\pm\sqrt{(m-1)^2+4\beta k}}{2}\right\}\bigcup\left\{\frac{-1\pm\sqrt{1+4\gamma}}{2}:\gamma\in \nspec(BB^\top)\right\},  & \text{if $\gamma=0$ and $u\in S$}.
\end{cases}$    
\end{center}

Note that a bidegreed split graph with induced clique of size $m=1$ is a star, which is known to be periodic. If $m\geq 2$, then we have the following characterization of periodicity in bidegreed split graphs.

\begin{theorem}
\label{thm:splitper}
Suppose $m\geq 2$. A bidegreed split graph $X$ with induced clique of size $m$ is periodic if and only if $(m-1)^2+4\beta k$ and $1+4\gamma$ are perfect squares for all $\gamma\in \nspec(BB^\top)$.

\begin{proof}
Recall that vertex $u$ is periodic in $X$ if and only if $\supp_{u}$ satisfies the ratio condition. If $\gamma=0$,
then $0$ or $-1$ belongs to $\supp_u$, and $\supp_u$ satisfies the ratio condition if and only if $\supp_u\subseteq\Z$. Now, suppose $\gamma\neq 0$ and let $\Delta_1=(m-1)^2+\beta k$ and $\Delta_2=1+4\gamma$. Then $\supp_u$ satisfying the ratio condition requires that 
\begin{center}
$\displaystyle\frac{\big(\frac{m-1+\sqrt{\Delta_1}}{2}\big)-\big(\frac{-1+\sqrt{\Delta_2}}{2}\big)}{\big(\frac{m-1+\sqrt{\Delta_1}}{2}\big)-\big(\frac{m-1-\sqrt{\Delta_1}}{2}\big)}\in\Q$.
\end{center}
Equivalently, $\frac{m+\sqrt{\Delta_1}-\sqrt{\Delta_2}}{\sqrt{\Delta_1}}\in\Q$, which holds if and only if $\Delta_1,\Delta_2$ are both perfect squares.
\end{proof}
\end{theorem}

%Since periodicity is a necessary condition for PST, Theorem \ref{thm:splitper} may be used to directly rule out PST in bidegreed split graphs. 
Notice that Theorem \ref{thm:splitper} puts strict requirements on the eigenvalues of a bidegreed split graph to admit PST, making it an unlikely property for this family of graphs. This motivates to instead investigate sedentariness in bidegreed split graphs in the next subsection.

\subsection{Sedentariness}

Next, we examine sedentariness in connected bidegreed split graphs. We show that connected bidegreed split graphs with at most four eigenvalues and their graph complements admit sedentary vertices.

\begin{theorem}\label{thmbsgsedentary}
Let $X$ be a bidegreed split graph such that $\nspec(BB^\top)=\{\gamma\}$ (resp. $B^\top B$). If $m>\sqrt{1+4\gamma}+1$ (resp. $n>\sqrt{1+4\gamma}+1$), then the vertices in $C$ (resp. $S$) are sedentary in $X$ and $X^c$. 
\end{theorem}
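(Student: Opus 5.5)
The plan is to apply Theorem~\ref{thm:sed}. For $u\in C$, I would exhibit one eigenvalue $\theta$ of $A(X)$ and one eigenvalue $\theta'$ of $A(X^c)$ with $(E_\theta)_{u,u}>\frac12$ and $(E_{\theta'})_{u,u}>\frac12$, using the explicit idempotents of Theorem~\ref{thmspecdecombsg}. Put $s=\sqrt{1+4\gamma}$, so the two roots of $\lambda^2+\lambda-\gamma=0$ are $\lambda_\pm=\frac{-1\pm s}{2}$. The hypothesis $\nspec(BB^\top)=\{\gamma\}$ means that every $\bx\perp\one_m$ satisfies $BB^\top\bx=\gamma\bx$. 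So, for any orthonormal basis $\{\bx\}$ of $\one_m^\perp$, we have $\sum\bx\bx^\top=I_m-\frac1m\mathbf J_m$, and each diagonal entry equals $1-\frac1m$.

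\emph{Step 1 (the graph $X$).} Assume first $\gamma\neq0$ and take $\theta=\lambda_-=-\frac{1+s}{2}$. Theorem~\ref{thmspecdecombsg} gives
\[
(E_{\lambda_-,\gamma})_{u,u}=\frac{\lambda_-^2}{\lambda_-^2+\gamma}\Big(1-\frac1m\Big)=\frac{\lambda_-}{2\lambda_-+1}\Big(1-\frac1m\Big)=\frac{1+s}{2s}\cdot\frac{m-1}{m},
\]
using $\gamma=\lambda_-^2+\lambda_-$. This quantity exceeds $\frac12$ if and only if $(1+s)(m-1)>sm$, that is, $m>s+1$, which is exactly the hypothesis. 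If $\gamma=0$, then $s=1$ and I use $E_{-1,0}$ instead. Its $(u,u)$ entry is $1-\frac1m$, which exceeds $\frac12$ if and only if $m>2=s+1$. One caveat: $\lambda_-$ might coincide with a main eigenvalue $\lambda_i$. In that case the true idempotent is the sum of the two orthogonal projections, and since diagonal entries of projections are nonnegative, the diagonal entry only increases. The bound therefore survives. Theorem~\ref{thm:sed} then gives sedentariness of $u$ in $X$.

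\emph{Step 2 (the graph $X^c$).} By Remark~\ref{rembsgsedentary}, $X^c$ is a bidegreed split graph in which $S$ is the clique and $C$ is the independent set. Its biadjacency block is $B'=\one_n\one_m^\top-B^\top$, and $B'^\top B'=(n-2k)\mathbf J_m+BB^\top$ has non-main spectrum equal to $\nspec(BB^\top)=\{\gamma\}$. Now $u$ lies on the independent side, so I use the second ($\mathbf y$-)form of $E_{\lambda,\gamma}$ in Theorem~\ref{thmspecdecombsg}, with the roles of $B$ and $B^\top$ swapped. Its $(u,u)$ entry is $\frac{\gamma}{\lambda^2+\gamma}\big(1-\frac1m\big)=\frac{\lambda+1}{2\lambda+1}\big(1-\frac1m\big)$. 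Choosing $\theta'=\lambda_+=\frac{s-1}{2}$ gives again $\frac{1+s}{2s}\cdot\frac{m-1}{m}>\frac12$. When $\gamma=0$, I use $E_{0,0}$, whose diagonal entry is $1-\frac1m$. The same caveat about coincidences with main eigenvalues of $X^c$ applies.

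\emph{Step 3 (the set $S$).} The statement for $S$ under $\nspec(B^\top B)=\{\gamma\}$ and $n>s+1$ is symmetric. In $X$, the vertices of $S$ use the $\mathbf y$-form with $\theta=\lambda_+$. In $X^c$, where $S$ is the clique, they use the $\mathbf x$-form with $\theta'=\lambda_-$. In both cases the diagonal entry is $\frac{1+s}{2s}\cdot\frac{n-1}{n}$.

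The main obstacle I expect is the bookkeeping in Step 2. I must check carefully that Theorem~\ref{thmspecdecombsg} applies to $X^c$ with the roles of $C$ and $S$ swapped, and that the relevant eigenspace of $B'^\top B'$ really is all of $\one^\perp$. The latter requires using Remark~\ref{rembsgsedentary} to confirm that no extra non-main eigenvalue of $B'^\top B'$ appears, including in the case where $BB^\top$ is reducible.
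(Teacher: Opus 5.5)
Your proposal is correct and follows essentially the same route as the paper. You apply Theorem~\ref{thm:sed} to the idempotent $E_{\lambda,\gamma}$ of Theorem~\ref{thmspecdecombsg}, with $\lambda=\lambda_-$ on the clique side and $\lambda=\lambda_+$ on the independent side (and $E_{-1,0}$ or $E_{0,0}$ when $\gamma=0$), so the relevant diagonal entry is $\big(\frac12+\frac{1}{2\sqrt{1+4\gamma}}\big)\big(1-\frac1m\big)$ (resp.\ with $n$). Your Step~2 for $X^c$, which places $C$ on the independent side via $B'^\top B'=(n-2k)\mathbf{J}_m+BB^\top$ and the $\mathbf{y}$-form with $\lambda_+$, together with your caveat about $\lambda_\pm$ coinciding with a main eigenvalue, just spells out what the paper compresses into a one-line appeal to Remark~\ref{rembsgsedentary}.
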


\begin{proof}
	Let $X$ be an $(\alpha,\beta)$-bidegreed split graph where $C$ is an induced clique, $S$ is an induced independent set, and $V(C)\cup V(S)=V(X)$. Theorem~\ref{thmspecdecombsg} provides the spectral decomposition of any bidegreed split graph. Consider the spectral idempotent 
	\begin{equation*}
	E_{\lambda,\gamma}=\displaystyle\sum_{\substack{\mathbf{1}^\top \mathbf{x}=\mathbf{0}, BB^\top \mathbf{x}=\gamma \mathbf{x}}}\dfrac{1}{\lambda^2+\gamma}\begin{bmatrix}
	\lambda^2 \mathbf{x}\mathbf{x}^\top &\lambda \mathbf{x}\mathbf{x}^\top B\\
	\lambda B^\top \mathbf{x}\mathbf{x}^\top & B^\top \mathbf{x}\mathbf{x}^\top B
	\end{bmatrix}\label{eqorthprojElambdagamma}
	\end{equation*}
	where $0\neq\gamma\in \nspec(BB^\top)$ and $\lambda$ is a root of $t^2+t-\gamma=0$. The sum above is taken over a set of orthonormal eigenvectors $\mathbf{x}$ corresponding to the eigenvalue $\gamma\in \nspec(BB^\top)$. Assume $BB^\top$ is irreducible. (Otherwise, if $BB^\top$ is reducible and $\nspec(BB^\top)=\{\gamma\}$, then $BB^\top= I_m$ and $\gamma=1$, and the same following argument applies.) Since $\nspec(BB^\top)=\{\gamma\}$, $ \sum_{\mathbf{x}} \mathbf{x}\mathbf{x}^\top=I-\frac{1}{m}J_m$.
	
We first show that all vertices in $C$ are sedentary. Set $\lambda=\frac{-1-\sqrt{1+4\gamma}}{2}$. Then $\frac{\lambda^2}{\lambda^2+\gamma}
 =\frac{1}{2}+\frac{1}{2\sqrt{1+4\gamma}}>\frac{1}{2}.$ So the first diagonal block of $E_{\lambda,\gamma}$ satisfies $\frac{\lambda^2}{\lambda^2+\gamma}\sum_{\mathbf{x}}  \mathbf{x}\mathbf{x}^\top=\big(\frac{1}{2}+\frac{1}{2\sqrt{1+4{\gamma}}}\big)\left(I-\frac{1}{m}J_m\right)$. The diagonal entries of the first diagonal block of $E_{\lambda,\gamma}$ each have a value at least $\big(\frac{1}{2}+\frac{1}{2\sqrt{1+4{\gamma}}}\big)\big(1-\frac{1}{m}\big)$, This is greater than $\frac{1}{2}$ when $m>\sqrt{1+4{\gamma}}+1$. The sedentariness conclusion for vertices in $C$ follows from Theorem~\ref{thm:sed}. The same argument also works when $\gamma=0$. Furthermore, by Remark~\ref{rembsgsedentary} since $\nspec(BB^\top)=\nspec((\mathbf{1}_m\mathbf{1}_n^\top-B)(\mathbf{1}_m\mathbf{1}_n^\top-B)^\top$, it follows that the vertices in $C$ are also sedentary as the vertices in $X^c$. Note that vertices in $C$ form an induced independent set in $X^c$ instead of a clique.
	
	To show that the vertices in $S$ are sedentary when $\nspec(B^\top B)=\{\gamma\}$, let $\gamma\neq 0$ and consider the matrix $E_{\lambda,\gamma}=\displaystyle\sum_{\mathbf{1}^\top \mathbf{y}=0, B^\top B\mathbf{y}=\gamma \mathbf{y}}\frac{\gamma}{\lambda^2+\gamma}\begin{bmatrix}
	\frac{\lambda^2}{\gamma^2}B\mathbf{y}\mathbf{y}^\top B & \frac{\lambda}{\gamma}B\mathbf{y}\mathbf{y}^\top \\ \frac{\lambda}{\gamma}\mathbf{y}\mathbf{y}^\top B & \mathbf{y}\mathbf{y}^\top
	\end{bmatrix}$.
Take $\lambda=\frac{-1+\sqrt{1+4\gamma}}{2}$. Similarly to the previous case, we have $\frac{\lambda^2}{\lambda^2+\gamma}=\frac{1}{2}-\frac{1}{2\sqrt{1+4\gamma}}$. So, $\frac{\gamma}{\lambda^2+\gamma}=\frac{1}{2}+\frac{1}{2\sqrt{1+4\gamma}}>\frac{1}{2}.$ Since $\sum_{\mathbf{y}}\mathbf{y}\mathbf{y}^\top=I-\frac{1}{n}\mathbf{J}_n$, the diagonal entries of the last diagonal block of $E_{\lambda,\gamma}$ satisfy:
	$\frac{\gamma}{\lambda^2+\gamma}\sum_{\mathbf{y}}  \mathbf{y}\mathbf{y}^\top =(\frac{1}{2}+\frac{1}{2\sqrt{1+4{\gamma}}})(I-\frac{1}{n}\mathbf{J}_n).$
Sedentariness then follows from Theorem~\ref{thm:sed} provided $n>\sqrt{1+4\gamma}+1$. If $\gamma=0$, it is evident from $E_0=\displaystyle\sum_{\mathbf{1}^\top \mathbf{y}=B^\top B\mathbf{y}=\mathbf{0}}\begin{bmatrix}
	O & O \\ O & \mathbf{y}\mathbf{y}^\top
	\end{bmatrix}$
that the diagonal entries of the nonzero block are $\left(1-\frac{1}{n}\right)$, which is greater than $\frac{1}{2}$ when $n>2$. By Remark~\ref{rembsgsedentary}, the vertices in $S$ are also sedentary in $X^c$.
\end{proof}

%\begin{ex}
%$X=K_m\vee O_n$ is a bidegreed split graph with $B=\mathbf{1}_m\mathbf{1}_n^\top$, $|C|=m$, and $|S|=n$. We have $\nspec(BB^\top)=\nspec(B^\top B)=\{0\}$ so by Theorem~\ref{thmbsgsedentary}, vertices in $C$ (resp. $S$) are sedentary in $X$ (and $X^c$ but $X^c$ is disconnected) when $m> 2 = \sqrt{1+4(0)}+1$ (resp. $n>2$).
%\end{ex}

In \cite{goldberg2020onsplit}, connected bidegreed split graphs of diameter $3$ with four distinct eigenvalues were classified . Such graphs satisfy $|\nspec(BB^\top)|=1$ and so Theorem~\ref{thmbsgsedentary} applies as seen in the next example. %In what follows, $X\circ Y$ denotes the corona product \cite{frucht1970corona}.

%\begin{ex}
%The graph $X=K_n\circ K_1$ is a bidegreed split graph with $B=I_n$ and $|C|=|S|=n$. Hence $BB^\top=B^\top B=I_n$ and $\nspec(BB^\top)=\nspec(B^\top B)=\{1\}$. By Theorem~\ref{thmbsgsedentary}, all vertices in $X$ and $X^c$ are sedentary when $n\geq4>\sqrt{5}+1$. See Fig.~\ref{fig2}.
%\end{ex}

\begin{ex}
The graph $X$ with adjacency matrix $\begin{bmatrix} \mathbf{J}_m-I_m & B_D\\ B_D^\top & O\end{bmatrix}$ where $B_D$ is the incidence matrix of a $(v=m,b=n,r,k=\beta,\lambda)$-design $D$, is a bidegreed split graph with $\gamma=\frac{(m-\beta)(\alpha-m+1)}{m-1}$ By Theorem~\ref{thmbsgsedentary}, vertices in $C$ (resp. $S$) are sedentary in $X$ and $X^c$ if $m>\sqrt{1+4\gamma}+1$ (resp. $n>\sqrt{1+4\gamma}+1$).
\end{ex}

We now construct bidegreed split graphs with at least five distinct eigenvalues that admit sedentariness.

\begin{cor}\label{corbsgsedentary}
	Let ${X}$ be a bidegreed split graph such that $BB^\top$ (resp. $B^\top B$) is reducible into $k$ subblocks and each irreducible subblock $P_i$ of size $n_i$ satisfies $\nspec(P_i)=\{\gamma_i\}$. Then the vertices in the clique $C$ (resp. $S$) corresponding to the subblock $P_i$ are sedentary in ${X}$ and in ${X}^c$ whenever $n_i>\sqrt{1+\gamma_i}+1$.
\end{cor}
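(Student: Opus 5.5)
The plan is to argue block by block, mimicking the proof of Theorem~\ref{thmbsgsedentary}, and to conclude via Theorem~\ref{thm:sed}. I treat the clique side; the $S$ side is symmetric, using the second form of $E_{\lambda,\gamma}$ with $B^\top B$ and the factor $\frac{\gamma}{\lambda^2+\gamma}$ in place of $\frac{\lambda^2}{\lambda^2+\gamma}$.

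First I order $C$ so that $BB^\top=\operatorname{diag}(P_1,\ldots,P_k)$. Since $BB^\top\mathbf{1}_m=\beta k\mathbf{1}_m$, each $P_i$ is $\beta k$-stochastic. Write $\mathbf{1}^{(i)}$ for the indicator of block $i$. Each irreducible $P_i$ has Perron vector $\mathbf{1}^{(i)}$, and its remaining eigenvalue is $\gamma_i$ by hypothesis. Hence $BB^\top$ has the eigenspace $W_i=\{\mathbf{x}:\operatorname{supp}\mathbf{x}\subseteq\text{block }i,\ \mathbf{x}\perp\mathbf{1}^{(i)}\}$, on which it acts as $\gamma_i$. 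These vectors are automatically orthogonal to $\mathbf{1}_m$, so they are non-main, and the orthogonal projection onto $W_i$ is $I_{n_i}-\frac{1}{n_i}\mathbf{J}_{n_i}$ placed in block $i$.

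If several blocks share the same value $\gamma_i$, the full $\gamma_i$-eigenspace of $BB^\top$ may be larger; it may also contain combinations of the $\mathbf{1}^{(j)}$ orthogonal to $\mathbf{1}_m$. This only increases the $(u,u)$ entry of the projection for $u$ in block $i$. So for such $u$, Theorem~\ref{thmspecdecombsg} with $\lambda=\frac{-1-\sqrt{1+4\gamma_i}}{2}$ gives
\[
(E_{\lambda,\gamma_i})_{u,u}\ \ge\ \frac{\lambda^2}{\lambda^2+\gamma_i}\Big(1-\frac{1}{n_i}\Big)=\Big(\frac12+\frac{1}{2\sqrt{1+4\gamma_i}}\Big)\Big(1-\frac{1}{n_i}\Big).
\]
When $\gamma_i=0$, I use $E_{-1,0}$ instead; its entries are $1-\frac1{n_i}$, which exceed $\frac12$ once $n_i>2$. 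Theorem~\ref{thm:sed} then gives sedentariness of $u$ in $X$ as soon as this quantity exceeds $\frac12$. For $X^c$, Remark~\ref{rembsgsedentary} gives $(\mathbf{1}\mathbf{1}^\top-B)(\mathbf{1}\mathbf{1}^\top-B)^\top=(n-2k)\mathbf{J}_m+BB^\top$. This agrees with $BB^\top$ on $\mathbf{1}_m^\perp$, so the same $W_i$ and the same idempotent bound carry over to $X^c$.

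The main obstacle is the final numerical step: showing that the displayed bound exceeds $\frac12$ under the stated hypothesis $n_i>\sqrt{1+\gamma_i}+1$. Setting $s=\sqrt{1+4\gamma_i}$, the inequality $\frac12(1+\frac1s)(1-\frac1{n_i})>\frac12$ is equivalent to $n_i>s+1$. Since $s\ge\sqrt{1+\gamma_i}$, this is the stronger requirement, so the single-idempotent estimate does not close the gap by itself.

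My first attempt to bridge it would be to sharpen the diagonal estimate. I would add the contribution of the extra non-main vectors of the form $\sum_j c_j\mathbf{1}^{(j)}$ whenever blocks share eigenvalues, since these increase $(E_{\lambda,\gamma_i})_{u,u}$ beyond $1-\frac1{n_i}$. Failing that, I would replace Theorem~\ref{thm:sed} by a direct lower bound of the form $|U(t)_{u,u}|\ge 2\max_\theta (E_\theta)_{u,u}-1$ applied to a grouping of idempotents, and check whether the threshold $\sqrt{1+\gamma_i}+1$ emerges. If neither works, the argument yields the corollary under the threshold $n_i>\sqrt{1+4\gamma_i}+1$, matching Theorem~\ref{thmbsgsedentary}.
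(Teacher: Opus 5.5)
Your argument is the paper's own proof, and it establishes the corollary with the threshold $n_i>\sqrt{1+4\gamma_i}+1$. The paper pads the non-main $\gamma_i$-eigenvectors of $P_i$ by zeros. The clique part of $E_{\lambda,\gamma_i}$, with $\lambda=\frac{-1-\sqrt{1+4\gamma_i}}{2}$, then has $i$-th diagonal block $\left(\frac12+\frac{1}{2\sqrt{1+4\gamma_i}}\right)\left(I_{n_i}-\frac{1}{n_i}\mathbf{J}_{n_i}\right)$. It then applies Theorem~\ref{thm:sed} and passes to $X^c$ via Remark~\ref{rembsgsedentary}. Its proof ends with exactly your condition $n_i>\sqrt{1+4\gamma_i}+1$, and the example after the corollary uses $n_i\ge 4>\sqrt5+1$ for $\gamma_i=1$. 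So the $\sqrt{1+\gamma_i}$ in the statement is a typo for $\sqrt{1+4\gamma_i}$. Your fallback is the intended result, and the ``main obstacle'' is not something to overcome.

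Neither bridging idea could work, and you should drop them.
\begin{itemize}
\item The $\gamma_i$-eigenspace of $BB^\top$ inside $\mathbf{1}^\perp$ is block diagonal. Hence the $(u,u)$ entry of its projection is exactly $1-\frac{1}{n_i}$, even if other blocks share the value $\gamma_i$.
\item The combinations of the $\mathbf{1}^{(j)}$ orthogonal to $\mathbf{1}_m$ are eigenvectors for $\beta k$, not for $\gamma_i$. Indeed $\gamma_i\neq\beta k$ by Perron--Frobenius applied to the irreducible block $P_i$.
\item Grouping the pair $\frac{-1\pm\sqrt{1+4\gamma_i}}{2}$ gives only $|U(t)_{u,u}|\ge \frac{1-1/n_i}{\sqrt{1+4\gamma_i}}-\frac{1}{n_i}$. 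This is positive under the same threshold.
\end{itemize}

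In fact the printed threshold is false. Take the clique $\{1,\dots,5\}$ and ten independent vertices: two joined to each of $\{1,2\},\{1,3\},\{2,3\}$, and four joined to $\{4,5\}$. This is an $(8,2)$-bidegreed split graph with $k=4$ and $BB^\top=(2I_3+2\mathbf{J}_3)\oplus 4\mathbf{J}_2$. So $n_1=3$, $\gamma_1=2$, and $3>\sqrt3+1$. For vertex $1$,
\[
e^{it/2}U(t)_{1,1}=e^{5it/2}\left(a_1e^{2\sqrt3\,it}+a_2e^{-2\sqrt3\,it}\right)+\left(c_+e^{i\sqrt{33}\,t/2}+c_-e^{-i\sqrt{33}\,t/2}\right)+\tfrac29e^{3it/2}+\tfrac49e^{-3it/2},
\]
with $a_1+a_2=\frac15$ and $c_--c_+=\frac{2}{15\sqrt{33}}$. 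When $t/2\equiv 3\pi/2\pmod{2\pi}$, the last two terms sum to $-\frac29 i$ and $e^{5it/2}=-i$. The first summand then traces an ellipse through $\pm\frac15 i$. Meanwhile $\frac29 i$ minus the second summand traces a closed curve through $\left(\frac29\pm\frac{2}{15\sqrt{33}}\right)i$. Since $\frac29-\frac{2}{15\sqrt{33}}<\frac15<\frac29+\frac{2}{15\sqrt{33}}$, one of these points lies inside the ellipse and the other outside, so the two curves meet. At such a meeting point the whole sum vanishes. Because $(t/2,\,2\sqrt3\,t,\,\sqrt{33}\,t/2)$ is dense modulo $2\pi$, we get $\inf_{t>0}|U(t)_{1,1}|=0$. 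So vertex $1$ is not sedentary.

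One minor point on the complement. In $X^c$ the set $C$ is the independent side. So for $u\in C$ you need the second ($\mathbf{y}\mathbf{y}^\top$) form of $E_{\lambda,\gamma}$ from Theorem~\ref{thmspecdecombsg}, with $\lambda=\frac{-1+\sqrt{1+4\gamma_i}}{2}$ and weight $\frac{\gamma_i}{\lambda^2+\gamma_i}$; use $E_{0,0}$ if $\gamma_i=0$. This weight also equals $\frac12+\frac{1}{2\sqrt{1+4\gamma_i}}$, so your conclusion for $X^c$ stands.
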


\begin{proof}
	We extend the technique used in the proof of Theorem~\ref{thmbsgsedentary}. We have the following key observations: (i) each irreducible subblock $P_i$ of $BB^\top$ is $\beta k$-stochastic, (ii) $\nspec(BB^\top)=\{\beta k\}\cup\left(\bigcup_i \nspec{(P_i)}\right)$, and (iii) if $\mathbf{x}_i$ is a non-main eigenvector of $P_i$ corresponding to $\gamma_i$, then $\mathbf{x}=[\mathbf{0}^\top\cdots\ \mathbf{0}^\top\mathbf{x}_i^\top\ \mathbf{0}^\top\cdots\mathbf{0}^\top]^\top$, partitioned conformally with the subblocks of $BB^\top$, is a non-main eigenvector for $BB^\top$ corresponding to $\gamma_i$. Now, the sum $\sum_{\mathbf{x}}\mathbf{x}\mathbf{x}^\top$ gives us a block diagonal matrix with all zero blocks except at the $i$th diagonal block which is $I_{n_i}-\frac{1}{n_i}\mathbf{J}_{n_i}$ where $n_i$ is the size of the subblock $P_i$. Via the same argument as in the proof of Theorem~\ref{thmbsgsedentary}, we get $\frac{\lambda^2}{\lambda^2+\gamma_i}\sum_{\mathbf{x}}\mathbf{x}\mathbf{x}^\top=(\frac{1}{2}+\frac{1}{2\sqrt{1+4\gamma_i}})(I_{n_i}-\frac{1}{n_i}\mathbf{J}_{n_i})$ so the diagonal entries corresponding to the vertices in $C$ and the subblock $P_i$ in $E_{\lambda,\gamma_i}$ have value $(\frac{1}{2}+\frac{1}{2\sqrt{1+4\gamma_i}})(1-\frac{1}{n_i})>\frac{1}{2}$ when $n_i>\sqrt{1+4\gamma_i}+1$. The result using $B^\top B$ follows similarly.
\end{proof}

\begin{ex}
	Let $X$ be the bidegreed split graph where $B=(\mathbf{J}_{n_1}-I_{n_1})\oplus(\mathbf{J}_{n_1}-I_{n_1})$. We have $BB^\top=B^\top B=((n_1-2)\mathbf{J_{n_1}}+I_{n_1})\oplus((n_1-2)\mathbf{J_{n_1}}+I_{n_1})$ which is reducible. Note that $\nspec(BB^\top)=\{n_1(n_1-2)+1,1\}$ so Theorem~\ref{thmbsgsedentary} does not directly apply. However as $\nspec(((n_1-2)\mathbf{J_{n_1}}+I_{n_1}))=\{1\}$ by Corollary~\ref{corbsgsedentary}, the vertices in $C$ and $S$ corresponding to the block $((n_1-2)\mathbf{J_{n_1}}+I_{n_1})$ are sedentary in $X$ and $X^c$ when $n_i\geq 4>\sqrt{5}+1$. The bidegreed split graph $X$ when $n_1=4$ is illustrated in Fig.~\ref{fig2}.
\end{ex}

\section{Open questions}
\label{sec:op}

In this paper, we investigated the behavior of quantum walks under joins, Cartesian products, and graph complements. We characterized conditions such that PGST and peak state transfer are preserved under these graph operations. We also examined the difference between the behavior of quantum walks on a graph and its complement by providing bounds on $f_{u,v}(t)$, $g_{u,v}(t)$, $\sigma_{u,v}(t)$ and $\rho_{u,v}(t)$. In order to inspire further work on this topic, we pose the following open questions. 

In \cite{bhattacharjya2024quantum,monterde2025laplacian}, it was shown that the blow-up operation does not preserve PGST relative to $A$ and $L$. So we broadly ask: under which conditions are PGST and peak state transfer preserved under direct, rooted, wreath, and corona products? For the case where peak state transfer is preserved under joins (resp., Cartesian products), the transfer probability does not exceed that of the underlying graph (resp., the product of the transfer probabilities of the underlying graphs). Are there examples where peak state transfer is preserved after a graph operation with increased transfer probability (that is, the $(u,v)$ entry of the bounding matrix of the resulting graph is larger than that of the underlying graph)?

To deepen the connection between quantum walks and main eigenvalues, we would like to characterize the graphs that attain the upper bound for $f_{u,v}(t)$ in Theorem~\ref{thm:projector upper bound}. A related open question is, which graphs on $n$ vertices have $k\geq 3$ linearly independent main eigenvectors?

Finally, since paths are riveting to researchers, it is desirable to derive tight upper bounds for $f_{u,v}(t)$, $g_{u,v}(t)$, $\sigma_{u,v}(t)$, and $\rho_{u,v}(t)$ for $P_n$. For $\sigma_{u,v}(t)$ and $\rho_{u,v}(t)$, the interesting case is when PGST does not occur between $u$ and $v$ in $P_n$ and its complement relative to the same sequence of times, because in this case $\sigma_{u,v}(t)$ and $\rho_{u,v}(t)$ are bounded away from 2 and 1 respectively, by Proposition~\ref{prop:NG}(1A). 

\bigskip

\noindent \textbf{Acknowledgments.}
We are grateful to the Pacific Institute for the Mathematical Sciences for the First Year Interest Group (FYIG) Initiative on the topic of \textit{Quantum Walks on Graphs} led by H.~Monterde, where we all started to work on this project. J.~Cooper is supported by the University of Victoria Department of Mathematics and Statistics and Faculty of Graduate Studies. H.~De Vera is supported by Dr.~Steve Kirkland, the Department of Mathematics, and the Faculty of Graduate and Postdoctoral Studies at the University of Manitoba. H.~Monterde is supported by the Pacific Institute for the Mathematical Sciences through the PIMS-Simons Postdoctoral Fellowship. S.A.~Talebpour is supported by Dr.~Karen Meagher, and the Faculty of Graduate Studies and Research at the University of Regina. X.~Wang was supported by the University of British Columbia Department of Mathematics for the whole duration of this project.

\bibliographystyle{alpha}
\bibliography{mybibfile}
\end{document}